\pgfplotsset{compat=newest}
\theoremstyle{plain}
\newtheorem{theorem}{Theorem}
\newtheorem{lemma}{Lemma}
\newtheorem{conjecture}{Conjecture}
\newtheorem*{assumption}{Assumption}
\theoremstyle{definition}
\newtheorem*{remark}{Remark}
\theoremstyle{remark}
\renewcommand{\pod}[1]{\allowbreak\mathchoice
  {\if@display \mkern 18mu\else \mkern 8mu\fi (#1)}
  {\if@display \mkern 18mu\else \mkern 8mu\fi (#1)}
  {\mkern4mu(#1)}
  {\mkern4mu(#1)}
}
\m@th\displaystyle{##}$\hfil}
\m@th\displaystyle{##}$\hfil}
\newcommand*{\textstack}[1]{%
  \text{%
    \renewcommand*{\arraystretch}{.9}%
    \begin{tabular}{@{}c@{}}%
      #1%
    \end{tabular}%
  }%
}
\DeclareMathOperator{\card}{card}
\DeclareMathOperator{\ch}{ch}
\DeclareMathOperator{\BZL}{BZL}
\DeclareMathOperator{\diag}{diag}
\DeclareMathOperator{\End}{End}
\DeclareMathOperator{\gl}{\mathfrak{gl}}
\DeclareMathOperator{\GL}{GL}
\DeclareMathOperator{\GT}{GT}
\DeclareMathOperator{\HH}{H}
\DeclareMathOperator{\ord}{ord}
\DeclareMathOperator{\SL}{SL}
\DeclareMathOperator{\SO}{SO}
\DeclareMathOperator{\Sp}{Sp}
\DeclareMathOperator{\ssp}{sp}
\DeclareMathOperator{\fraksp}{\mathfrak{sp}}
\DeclareMathOperator{\Span}{span}
\DeclareMathOperator{\supp}{supp}
\DeclareMathOperator{\wt}{wt}
\DeclareMathOperator{\indeg}{indeg}
\DeclareMathOperator{\outdeg}{outdeg}
\DeclareMathOperator{\Ind}{Ind}
\newcolumntype{Y}{>{\centering\arraybackslash}X}
\newcommand{\dspin}[2]{{ #1 \atop #2}}
\def\th@plain{%
  \thm@notefont{}
  \itshape 
}
\def\th@definition{%
  \thm@notefont{}
  \normalfont 
}
\begin{document}
\title[Metaplectic Ice for Cartan Type C]{Metaplectic Ice for Cartan Type C}
\author[Nathan Gray]{Nathan Gray}
\address{Department of Mathematics, Mount Holyoke College, South Hadley, MA 01075}
\email{ngray@mtholyoke.edu}

\maketitle

%
%
\begin{abstract}
We use techniques from statistical mechanics to provide new formulas for Whittaker coefficients of metaplectic Eisenstein series on odd orthogonal groups, matching~\cite{FZ}. We study a particular variation/generalization of the six-vertex model of Cartan type C having ``domain-wall boundary conditions'' dependent on a given integer partition $\lambda$ of length at most $r$, where $r$ is a fixed positive integer. More precisely, we examine a planar, non-nested, U-turn model whose partition functions $Z_{\lambda}$ are a generalization of a deformation of characters of the symplectic group $\Sp(2r, \mathbb{C})$. Special cases appeared in Kuperberg~\cite{Kuperberg2} and then in Hamel and King~\cite{HK3}, Brubaker, Bump, Chinta, and Gunnells~\cite{BBCG}, and Ivanov~\cite{Ivanov}.

Our main result is that these new families of ``metaplectic'' models are solvable---i.e., they possess Yang--Baxter equations. We use this to derive two types of functional equations involving $Z_{\lambda}$ corresponding to the two root lengths for simple reflections of the symplectic Weyl group. It is widely believed that the local component of metaplectic Eisenstein series is a metaplectic Whittaker function, though this is subtle owing to the lack of uniqueness of Whittaker models and only verified in type A~\cite{McNamara}. Thus, we also give evidence for the conjecture that $Z_{\lambda}$ is a spherical Whittaker function by showing that $Z_{\lambda}$ satisfies the same identities under our solution to the Yang--Baxter equation as the metaplectic Whittaker function under intertwining operators on the unramified principal series of an $n$-fold metaplectic cover of $\SO(2r + 1)$, for $n$ odd.
\end{abstract}

%
%
 \tableofcontents

\section{Introduction}

Exact solutions of statistical-mechanical models on planar lattices were explored by Baxter~\cite{Baxter}. As we will describe later in this paper, an ``exactly solvable'' model is one where an explicit generating function on states of the model---referred to as a ``partition function'' of the model---may be computed in closed form. Baxter's techniques (particularly the use of the so-called Yang--Baxter equation) later made their way to combinatorics in Kuperberg's proof of the alternating-sign matrix conjecture~\cite{Kuperberg1}. His proof made use of one such model: the six-vertex model (or ``square ice''). Later, Kuperberg~\cite{Kuperberg2} extended these techniques to symmetry classes of alternating-sign matrices using planar lattices with a variety of interesting configurations. The subject of this paper is an exactly solvable model on a generalization of one such lattice in~\cite{Kuperberg2}.

To provide context for our results, we begin with some history. Tokuyama~\cite{Tokuyama} found a generating-function identity that simultaneously deformed the Weyl character formula and the combinatorial generating function for highest-weight characters of $\GL(r,\mathbb{C})$. His generating function was initially expressed as a sum over (shifted) strict Gelfand--Tsetlin patterns, but was later given by Hamel and King~\cite{HK1, HK2, HK3} as a ``partition function'' of a six-vertex model on a rectangular lattice. Tokuyama's deformation matches precisely the output of Shintani~\cite{Shintani} and Casselman--Shalika~\cite{CS} for the spherical Whittaker function on $\GL(r,F)$ ($F$ a nonarchimedean local field) evaluated at a dominant integral element. The spherical Whittaker function is a complex-valued function defined on $\GL(r,F)$, and its evaluation at elements of a maximal torus of $\GL(r,F)$ is a critical ingredient in many aspects of automorphic forms and representation theory; it is described in greater detail in Section~\ref{sec:Algebraic_Preliminaries}. In summary, combining this string of equalities, one obtains a description of the spherical Whittaker function as a partition function of a lattice model. This string of equalities is rather ad-hoc, and several immediate questions arise:
\begin{enumerate}[label=\textbf{\arabic*.}]
\item Does there exist a more direct proof of such an identity?
\item To what extent do these formulas generalize to other reductive groups (and their arithmetic covers)?
\item What do we learn about the Whittaker function as a result of these connections?
\end{enumerate}

Satisfactory answers to these questions have recently been given in Cartan type A, and this paper answers some of them in Cartan type C. Let us begin with the known answers to these questions in type A.

Brubaker, Bump, and Friedberg~\cite{BBF2} gave a partial answer to Question 1 by providing a statistical-mechanical proof of the results above of Tokuyama and Hamel--King. In particular, they demonstrated a family of Yang--Baxter equations for the underlying models. Very recently, Brubaker, Buciumas, and Bump~\cite{BBB} found a Yang--Baxter equation for a statistical-mechanical model for metaplectic Whittaker functions. A result of this paper is a type C analogue of the results of~\cite{BBB}.

In type C, a deformed Weyl character formula was proved by Hamel and King~\cite{HK1}, and a statistical-mechanical proof in the spirit of~\cite{BBF2} was given by Ivanov~\cite{Ivanov}.

In this paper, we use similar techniques to present formulas for nonarchimedean metaplectic Whittaker functions, arising in the local theory of automorphic forms. We study particular variations/generalizations of the six-vertex model of type C having ``domain-wall boundary conditions'' dependent on a given integer partition $\lambda$ of length at most $r$; these will be described in detail in Section~\ref{sec:Partition_Boltzmann_Charge}. More precisely, we examine a planar, non-nested, U-turn model whose partition function is related to characters of the symplectic group $\Sp (2r, \mathbb{C})$. We refer to this model as symplectic ice so that the underlying group is clear. (Technically, we should refer to it as \textit{metaplectic} ice.) Special cases appeared in Kuperberg~\cite{Kuperberg2} and then in Hamel and King~\cite{HK3}, Brubaker, Bump, Chinta, and Gunnells~\cite{BBCG}, and Ivanov~\cite{Ivanov}.

A recent application was shown in Brubaker, Bump, Buciumas, and Gray~\cite{BBBGray}, in which values of spherical Whittaker functions on an $n$-fold metaplectic cover of $\GL(r,F)$ ($F$ a nonarchimedean local field) were interpreted as partition functions of two statistical-mechanical models that differed in their Boltzmann weights. Equality of the partition functions was shown by using the commutativity of transfer matrices associated to the two models by the Yang--Baxter equation proved in Section~\ref{sec:Yang_Baxter_equation} of the current paper.

The solution to the Yang--Baxter equation was unknown when the proof of Statement B in~\cite{BBF_annals} was written, a lengthy proof written in terms of crystal bases. The existence of a solution to the Yang--Baxter equation is an important result in the current paper, reducing much of the work done in~\cite{BBF_annals} to about a dozen pages. We also use the Yang--Baxter equation to prove local functional equations for the partition function of our statistical-mechanical model, much like the functional equations appearing in Kuperberg~\cite{Kuperberg2}.

We now describe the contents of this paper. In Section~\ref{sec:Partition_Boltzmann_Charge}, we will describe our symplectic-ice model and define some key terms. We define the partition function $Z$ as a certain weighted sum over all the allowable ``admissible states'' of our model. We will also mention and describe an integer-valued global statistic, called charge, that distinguishes our model from the usual six-vertex model of type C. Finally, we list the Boltzmann weights of the vertices in our model, weights that depend on charge.

In Section~\ref{sec:Yang_Baxter_equation}, we state and prove our main tool: a Yang--Baxter equation for metaplectic Boltzmann weights. We introduce an alternative viewpoint on charge that we call a decoration, which is needed in order to make our Boltzmann weights local (i.e., depending only on nearest-neighbor interactions). We then prove Theorem~\ref{thm:metaplecticYBE}, showing that a solution to the Yang--Baxter equation exists.

In Section~\ref{sec:Connections_Metaplectic}, we mention the fact that admissible states of our model are in bijection with symplectic patterns (Gelfand--Tsetlin patterns of type C). The remainder of the section goes through the proof of Theorem~\ref{T:sym_pattern_bij}, which relates certain admissible states of our model to the $p$-parts of a multiple Dirichlet series.

In Section~\ref{sec:Function_Eqns_Partition}, we prove Theorems~\ref{T:transposition} and~\ref{thm:inverse}, giving two functional equations that involve the partition function $Z$. (These functional equations are used in Section~\ref{sec:Connections_intertwining_Whittaker}.) One functional equation describes the result of interchanging any two adjacent rows $i$ and $i+1$ in the model; this amounts to interchanging the spectral parameters $z_{i}$ and $z_{i+1}$ in $Z$ (here $z_i$ and $z_{i+1}$ are factors of our Boltzmann weights), so represents a functional equation for the partition function under the action by a short simple root. The other equation describes the result of interchanging $z_r$ and $z_{r}^{-1}$ and reflects an action by the long simple root. A peculiar feature of the proofs for Theorems~\ref{T:transposition} and~\ref{thm:inverse} is their dependence on certain lemmas that we have called the caduceus and fish relations, after~\cite{BBCFG}.

In Section~\ref{sec:Algebraic_Preliminaries}, we give a brief account of metaplectic groups and their Whittaker functions, with pointers to the relevant literature for each needed result.

In Section~\ref{sec:Connections_intertwining_Whittaker}, we demonstrate that the partition functions of symplectic ice could satisfy the same identities under our solution to the Yang--Baxter equation as the metaplectic Whittaker function under intertwining operators on unramified principal series. Being able to show these identities by using an algorithm given in~\cite{McNamara} is a topic for future exploration.

In Section~\ref{sec:further_questions}, we give a brief discussion on a few questions related to some of the topics in this paper, questions that can support future work.

Thanks to Benjamin Brubaker for his patience, encouragement, and guidance through this paper and other work. 

\section{The Partition Function $Z$}
\label{sec:Partition_Boltzmann_Charge}

Symplectic ice is a collection of digraphs, each arranged on a rectangular lattice and having exterior edges and interior edges, with ``bends'' connecting adjacent rows; see Figure~\ref{fig:Example_state}. Each exterior edge is incident to one vertex if the edge is not part of a bend. We assign each edge a sign of either $+$ or $-$, called the \textbf{spin} of the edge. The spins of the exterior edges along the top, left, and bottom boundaries are referred to as the model's boundary conditions. The boundary conditions are fixed as part of the model's data. Each interior edge is incident to two vertices. Assigning spins to all interior edges of the model yields a digraph called a \textbf{state} of the model.

\begin{figure}[H]
\makebox[\textwidth][c]{%
%
}
\caption{}
\label{fig:Example_state}
\end{figure}

Fix a positive integer $r$, and let $\rho = (r, r-1, \ldots, 1)$. Let $\lambda = (\lambda_1, \lambda_2, \ldots, \lambda_r)$ be an integer partition. Each digraph of symplectic ice is arranged as follows (see Figure~\ref{fig:Symplectic}):
\begin{itemize}
\item \textsc{Rows/Columns:} There are $2r$ rows and $\lambda_1 + r$ columns arranged in a rectangular lattice. The columns are numbered $1$,~$2$,~\ldots,~$\lambda_1 + r$ from right to left. The rows are numbered $1$, $\overline{1}$, $2$, $\overline{2}$,~\ldots,~$r$,~$\overline{r}$ from top to bottom. We call $1$,~$2$,~\ldots,~$r$ the spectral indices. A vertex is at the intersection of each column and each row. 
\item \textsc{Bends:} For every $i \in \{ 1, \ldots, r \}$, there is a ``bend'' at the right boundary that consists of one vertex and two adjacent edges that connect rows $(i, \overline{i} \,)$. 
\item \textsc{Boundary Conditions:} For every state, all spins along the left/bottom boundaries are $+$. All spins along the top boundary are $-$ if they occur in the columns numbered by the parts of $\lambda+\rho$; else, they are $+$.
\end{itemize}
Thus, $\lambda = (2,1,1)$ corresponds to the top boundary conditions in Figure~\ref{fig:Example_state}. We will denote by $\mathfrak{S}_{\lambda}$ the set of all admissible states of our model, whose top boundary condition is determined by $\lambda$.

\begin{figure}[h]
\makebox[\textwidth][c]{%
%
}
\caption{}
\label{fig:Symplectic}
\end{figure}

Our main concern will be the study of the partition function of our model---a generating function equal to a weighted sum over the set of all admissible states. For every state $\mathfrak{s}$, we assign to each vertex $v$ in $\mathfrak{s}$ a weight, called the \textbf{Boltzmann weight} of $v$ and denoted by $\wt(v)$, where the weight depends on the spins of the edges adjacent to $v$. The \textbf{Boltzmann weight} of the state $\mathfrak{s}$, denoted by $\wt(\mathfrak{s})$, is the product of the Boltzmann weights of all vertices in $\mathfrak{s}$.

The \textbf{partition function} $Z(\mathfrak{S}_{\lambda})$, or simply $Z$, is the sum of the Boltzmann weights of all states. For every non-bend vertex, its weight is zero if its adjacent edges are not one of the six admissible configurations; so $Z$ can be defined as the sum of the Boltzmann weights of all admissible states. Otherwise, the weight of every non-bend vertex is taken from two distinct sets: vertices in row $i$ are assigned weights from a set $\Delta$, and vertices in row $\overline{i}$ from a set $\Gamma$. (See Figure~\ref{fig:Symplectic}.) The weights depend on an integer-valued global statistic called \textbf{charge}, described next. 
\begin{itemize}
\item Consider a row of vertices having weights taken from $\Delta$ ice. The charge at any horizontal edge is the number of spins of $-$ at and to the left of the edge. The leftmost edge has charge $0$. The charge at any vertex is the charge at the left edge incident to the vertex. The charge at the edge of the bend connected to the row is defined in the same manner, and the charge at the bend's vertex is the charge at this edge. 
\item Consider a row of vertices having weights taken from $\Gamma$ ice. The charge at any horizontal edge is the sum of the number of spins of $+$ at and to the right of the edge and the charge at the vertex of the bend. The charge at any vertex is the charge at the right edge incident to the vertex. 
\end{itemize}
Thus, charge in a row of $\Delta$ ice begins at the leftmost edge and increments from left to right. Charge in a row of $\Gamma$ ice begins at the vertex of the bend and increments from right to left. For example, Figure~\ref{fig:Charge_example_state} shows the charge at each edge along rows $(1, \overline{1}\,)$ of the state given in Figure~\ref{fig:Example_state}.

\begin{figure}[h]
\makebox[\textwidth][c]{%
%
}
\caption{}
\label{fig:Charge_example_state}
\end{figure}

Fix a parameter $v$ and positive integers $n$ and $r$, with $n$ odd. Let $z_1$,~\ldots,~$z_r \in \mathbb{C}^{\times}$. Define the functions $g$, $\delta$, and $h$ on $\mathbb{Z}$ as follows: $g$ is periodic modulo $n$ and satisfies $g(0) = -v$ and $g(a) g(n-a) = v$ for all $a \in \mathbb{Z}$ with $n \nmid a$. Function $\delta$ is defined for every $a \in \mathbb{Z}$ by $\delta(a) = 1$ if $n \mid a$, and $\delta(a) = 0$ with $n \nmid a$. Function $h$ is defined for every $a \in \mathbb{Z}$ by $h(a) = (1 - v) \delta(a)$. The Boltzmann weights of non-bend vertices are listed in Table~\ref{tab:Delta_Gamma}, where the subscript $i$ means the illustrated vertices belong to row $i$, and where $a$ and $a+1$ stand for charge. The $z_i$ appearing in these weights is called a \textbf{spectral parameter}. If the vertex is in row $\overline{i}$, the same weights are used but with spectral parameter $z_{i}^{-1}$ rather than $z_i$.

\begin{table}[t]
\caption{Boltzmann weights of $\Delta$ ice and $\Gamma$ ice.}
\noindent

	\biggr)
	= 
	z_{i}^{-1},
\end{equation*}
where $a$ and $a+1$ are the charges at the edges of the bend. These weights are \textbf{spectrally dependent:} if $i$ and $\overline{i}$ are interchanged, then $z_i$ and $z_{i}^{-1}$ are interchanged. We will call the bends after such an interchange ``flipped'' $\Delta \Gamma$-bends. 

\begin{theorem}
\label{P:n_admissible}
Let $\mathfrak{s}$ be an admissible state. If\/ $\wt(\mathfrak{s}) \neq 0$, then for every row of $\Delta$ ice (resp., $\Gamma$ ice) in $\mathfrak{s}$, each horizontal edge with spin $+$ (resp., $-$) in the row has a charge divisible by $n$.
\end{theorem}
\begin{proof}
Suppose $\wt(\mathfrak{s}) \neq 0$. Assume, for contradiction, that row $i$ of $\Delta$ ice has an edge with a spin of $+$ and a charge not divisible by $n$. Let $v$ be the leftmost vertex in the row such that the edge to the right of $v$ satisfies those conditions, say the edge has spin $+$ and charge $a$ with $n \nmid a$. The charge at $v$ is $a$ (nonzero), so $v$ is not the leftmost vertex in row $i$. Let $v'$ be the vertex to the left of $v$. 
\[
	\mathord{%
\begin{tikzpicture}[>=stealth', scale=0.7, every node/.style={scale=1.0}]
\node[above] at (2,0.10) {\scriptsize$a$};
\foreach \x in {0.5,1.5}
{%
	\filldraw[black] (\x,0) circle(2pt);
}%
\draw[black, semithick] (0,0) -- (2.5,0);
\node[below, black] at (0.5,0) {\footnotesize$\,v'$};
\node[below, black] at (1.5,0) {\footnotesize$v \vphantom{v'}$};
\begin{scope}
\tikzstyle{every node}=[draw] 
\path[black, shape=circle, very thin] 
	(1,0) 
	node[fill=white, inner sep=-0.5pt] 
	{\tiny$\boldsymbol{\phantom{+}}$};
	
\path[black, shape=circle, very thin] 
	(2,0) 
	node[fill=white, inner sep=-0.5pt] 
	{\tiny$\boldsymbol{+}$};
\end{scope}
\end{tikzpicture}%
	}
\]
If the spin of the edge to the left of $v$ is $+$, then the charge at $v'$ equals the charge at $v$, contradicting our choice of $v$ as being the leftmost counterexample in row $i$. Thus, the spin to the left of $v$ is $-$, so $v$ is a $c_1$-vertex with $\wt(v) = h(a) z_i = 0$, since $n \nmid a$. But then $\wt(\mathfrak{s}) = 0$, a contradiction.

The case involving $\Gamma$ ice is handled similarly and is left to the reader. 
\end{proof}


Following~\cite{BBB}, we can make the Boltzmann weights of vertices in our model ``local'' by introducing data called decorated spins. By local, we mean weights depending on nearest-neighbor interactions rather than weights depending on the ``global'' statistic of charge. A \textbf{decorated spin} for a horizontal edge is an ordered pair $(\varepsilon, a)$, where $\varepsilon \in \{ +, - \}$ is a spin and $a \in \{ 0, 1, \ldots, n-1 \}$; we call $a$ the \textbf{decoration} of the decorated spin. We will usually denote $(\varepsilon, a)$ by either $\varepsilon a$ or $\begin{smallmatrix} a \\ \varepsilon \end{smallmatrix}$. For our configurations, we will denote $(\varepsilon, a)$ by drawing a circle with $\varepsilon$ in it and $a$ next to it, as shown below.
\[
	\mathord{%
	\begin{tikzpicture}[scale=0.95, every node/.style={scale=1.0}]
	\begin{scope}
	\tikzstyle{spin}=[draw, circle] 
	\path[black, very thin] (0,0) 
	node[inner sep=-0.5pt, fill=white, spin]
	{\tiny$\phantom{\boldsymbol{+}}$};
	\node[above] at (0,0.10) {\scriptsize$a$};
	\end{scope}
	\node at (0,0) {\scriptsize$\varepsilon$};
	\end{tikzpicture}%
	}
\]

Given an admissible state, to each horizontal edge in the state we assign a decoration $a$ that depends on both the spin $\varepsilon$ of the edge and the ice type of the edge. If the edge is in a row of $\Delta$ ice (resp., $\Gamma$ ice), the spin is $\varepsilon = -$ (resp., $\varepsilon = +$), and if the edge to the left (resp., right) has charge $c$, then $a \equiv c + 1 \pod{n}$; otherwise, $a \equiv c \pod{n}$. For each row of $\Delta$ ice, the leftmost edge in the row has decoration $0$. So the decoration at any horizontal edge of the model is precisely the charge at the edge modulo $n$. Since the Boltzmann weights in Table~\ref{tab:Delta_Gamma} depend only on the charge modulo $n$, the weights have a local interpretation. But it can be useful to view the charges shown in Table~\ref{tab:Delta_Gamma} as having values from $\mathbb{Z}$ rather than from $\{ 0, 1, \ldots, n-1 \}$. With this viewpoint, replacing the charge $a$ found in any configuration from Table~\ref{tab:Delta_Gamma} by an integer $a' \equiv a \pod{n}$ reduces the Boltzmann weight of the configuration to zero unless $a' = a$.

The use of decorated spins makes the partition functions of our model differ from the partition functions of the usual symplectic-ice model in that, rather than every horizontal edge being assigned only a spin, the edge is assigned a spin and an integer modulo $n$.

\section{The Yang--Baxter Equation}
\label{sec:Yang_Baxter_equation}

The Yang--Baxter equation (YBE) involves three vertices, which we will label as $z_1$, $z_2$, and $R_{z_1, z_2}$, where each of $z_1$ and $z_2$ also serves to indicate the spectral parameter used in the Boltzmann weight of the associated vertex. The weights of $z_1$ and $z_2$ are taken from $\Delta$ or $\Gamma$ ice. Concerning the possible Boltzmann weights of $R_{z_1, z_2}$, let $X$,~$Y \in \{ \Delta, \Gamma \}$, and suppose $z_1$ and $z_2$ have Boltzmann weights of ice types $X$ and $Y$, respectively. Then $R_{z_1, z_2}$ has a Boltzmann weight of ice type $XY$. The four possible ice types for $R_{z_1, z_2}$, along with their Boltzmann weights, are given in Table~\ref{tab:braid_weights}. The fact that the fixed integer $n$ is odd plays an important role in these weights and therefore in all the computations that follow.

\begin{theorem}[Yang--Baxter equation]
\label{thm:metaplecticYBE}
Let $X$,~$Y \in \{ \Delta, \Gamma \}$. Let $z_1$, $z_2$, and $R_{z_1, z_2}$ be the vertices in Figure~\ref{fig:YBE}, and suppose their Boltzmann weights are of ice types $X$, $Y$, and $XY$, respectively. Suppose the exterior spins $\smash{ (\dspin{c_1}{\varepsilon_1}, \dspin{c_2}{\varepsilon_2}, \varepsilon_3, \dspin{c_4}{\varepsilon_4}, \dspin{c_5}{\varepsilon_5}, \varepsilon_6) }$ are fixed. Then the partition functions of the configurations in Figure~\ref{fig:YBE} are equal, where the partition functions are computed by summing over all possible values of the interior spins $\smash{ (\dspin{e_1}{\alpha_1}, \dspin{e_2}{\alpha_2}, \alpha_3) }$ and $\smash{ (\dspin{f_1}{\omega_1}, \dspin{f_2}{\omega_2}, \omega_3) }$. 

\begin{figure}[h]
\centering
\makebox[\textwidth][c]{%
\begin{tablenotes}
{\footnotesize
\item[*] The Boltzmann weight is $v^2 z_{2}^n {-}  z_{1}^n$ if $2a \equiv 1 \pod{n}$. Else the Boltzmann weight is $g(2a {-} 1) (z_{1}^n {-} v z_{2}^n)$.
\item[\textdagger] Here $a {+} b \equiv 1 \pod{n}$.
\item[\textdaggerdbl] Here $a {+} b \not\equiv 1 \pod{n}$. The Boltzmann weight is $g(a {+} b {-} 1) (z_{1}^n {-} v z_{2}^n)$.
\item[\S] Here $a {+} b \equiv c {+} d \equiv 1 \pod{n}$, $a \not \equiv c \pod{n}$. Let $e \equiv a {-} c \pod{n}$ with $e \in [0, n {-} 1]$. The Boltzmann weight is $(v {-} 1) z_{1}^{n-e} z_2^{e}$ if $ad=0$ or if both $abcd \neq 0$ and $a > c$. The Boltzmann weight is $v(v {-} 1) z_{1}^{n-e} z_2^{e}$ if $bc=0$ or if both $abcd \neq 0$ and $a < c$. 
\item[\parbox{\widthof{$\!$\#}}{$\|$\hfil}\!] Here $a {+} b \equiv 1 \pod{n}$. Choose $a$ and $b$ in $[1,n]$.
\item[$\!$\#\!] Here $c \equiv a {-} b \pod{n}$ with $c \in [1,n {-} 1]$.  
\item[**] Choose $a$ in $[1, n]$. 
\item[\textdagger\textdagger] Here $a {+} b \not \equiv 1 \pod{n}$. The Boltzmann weight is $(z_{2}^n {-} v^n z_{1}^n)/g(a {+} b {-} 1)$.
\item[\textdaggerdbl\textdaggerdbl] Here $a {+} b \equiv c {+} d \equiv 1 \pod{n}$, $a \not \equiv c \pod{n}$. Let $e \equiv c {-} a \pod{n}$ with $e \in [1, n {-} 1]$. The Boltzmann weight is $(1 {-} v) v^{e-1} z_{1}^{e} z_{2}^{n-e}$.
}
\end{tablenotes}
\label{tab:braid_weights}
\end{threeparttable}
\end{table}

\begin{proof}
For every configuration in Figure~\ref{fig:YBE}, the exterior spins $\varepsilon_1$,~\ldots,~$\varepsilon_6$ have an even number of $+$ spins, so there are 32 choices for $(\varepsilon_1, \ldots, \varepsilon_6)$. The 32 cases when $R_{z_1, z_2}$ is of ice type $\Gamma \Gamma$ are given in~\cite{BBB}. We list all cases for the remaining three ice types in Appendices~\ref{app:appendixDD}--\ref{app:appendixGD}.

As an example, we give the case where $R_{z_1, z_2}$ is of ice type $\Delta \Delta$ and where $(\varepsilon_1, \ldots, \varepsilon_6) = (-, +, +, +, -, +)$. This is listed as Case 6 in Appendix~\ref{app:appendixDD}. The left side of the YBE has exactly two admissible states, given in Figure~\ref{fig:YBE_DDcase6left}. The right side of the YBE has exactly one admissible state, given in Figure~\ref{fig:YBE_DDcase6right}.

\begin{figure}[htb]
\makebox[\textwidth][c]{%
\hfill\begin{minipage}[b]{0.45\textwidth}
\centering
%
\subcaption{}
\label{fig:YBE_DDcase6right}
\end{minipage}\hfill%
}
\caption{}
\label{fig:YBE_DDcase6}
\end{figure}

\textbf{Case 6a:} $a \neq 0$. The state at the top of Figure~\ref{fig:YBE_DDcase6left} is excluded. The Boltzmann weight of the state at the bottom of Figure~\ref{fig:YBE_DDcase6left} is $(1-v) z_{1}^{n-a+1} z_{2}^a$, and the Boltzmann weight of the state in Figure~\ref{fig:YBE_DDcase6right} is $(1-v) z_{1}^{n-a+1} z_{2}^a$.

\textbf{Case 6b:} $a = 0$. The Boltzmann weights of the states in Figure~\ref{fig:YBE_DDcase6left} are $(1-v) (z_{1}^n - z_{2}^n) z_1$ and $(1-v) z_1 z_{2}^n$, respectively, the sum of which is $(1-v) z_{1}^{n+1}$, the Boltzmann weight of the state in Figure~\ref{fig:YBE_DDcase6right}.
\end{proof}

The Yang--Baxter equation given above was unknown when the proof of Statement B in~\cite{BBF_annals} was written, a lengthy proof written in terms of crystal bases. The existence of a solution to the Yang--Baxter equation in the current paper reduces much of the work done in~\cite{BBF_annals} to about a dozen pages. In Section~\ref{sec:Function_Eqns_Partition}, we will use the Yang--Baxter equation to prove local functional equations for the partition function, much like the functional equations appearing in Kuperberg~\cite{Kuperberg2}.

A recent application was shown in Brubaker, Bump, Buciumas, and Gray~\cite{BBBGray}, in which values of spherical Whittaker functions on an $n$-fold metaplectic cover of $\GL(r,F)$ ($F$ a nonarchimedean local field) were interpreted as partition functions of two statistical-mechanical models differing in their Boltzmann weights, which are the Boltzmann weights listed in Tables~\ref{tab:Delta_Gamma} and~\ref{tab:braid_weights}. Equality of the partition functions was shown by using the commutativity of transfer matrices associated to the two models by the Yang--Baxter equation in Theorem~\ref{thm:metaplecticYBE}.

\section{Connections to Metaplectic Eisenstein Series}
\label{sec:Connections_Metaplectic}

Friedberg and Zhang~\cite{FZ} showed that the generating function on strict symplectic patterns given in Beineke, Brubaker, and Frechette~\cite{BeBrFr2} is the prime-power supported coefficients of a metaplectic Eisenstein series on an odd-degree cover of $\SO (2r + 1)$. The connection in~\cite{FZ} is made through an intermediate bijection in~\cite{BeBrFr1}. We show that this generating function---a multiple Dirichlet series---is related to the Boltzmann weights of certain admissible states for our symplectic-ice model. We first introduce some notation and terminology from~\cite{BeBrFr2}.

By a \textbf{symplectic pattern}, or a \textbf{Gelfand--Tsetlin pattern of type C}, we mean a triangular arrangement $P$ of nonnegative integers of the form
\[
	\begin{matrix}
	a_{0,1} & {} & a_{0,2} & \ldots & a_{0,r} & {} 
	\\
	{} & b_{1,1} & {} & b_{1,2} & \ldots & b_{1,r} 
	\\
	{} & {} & a_{1,2} & \ldots & a_{1,r} & {} 
	\\
	{} & {} & {} &\hdotsfor{3} 
	\\
	{} & {} & {} & {} & a_{r-1,r} & {}
	\\
	{} & {} & {} & {} & {} & b_{r,r}
	\end{matrix}
\]
where the rows interleave: for all $i$ and $j$,
\[
	\min\{ a_{i-1, j} , a_{i, j} \} \geq b_{i,j} \geq \max \{ a_{i-1, j+1} , a_{i, j+1} \}
\]
and 
\[
	\min\{ b_{i+1, j-1} , b_{i, j-1} \} \geq a_{i,j} \geq \max \{ b_{i+1, j} , b_{i, j} \}.
\]
We say $P$ is \textbf{strict} if the entries in any row are strictly decreasing and if $a_{i,r} \neq 0$ for all $i$. (See Proctor~\cite{Proctor}.) Denote by $\GT_{\mathrm{str}} (\lambda + \rho)$ the set of all strict symplectic patterns with a fixed top row of $\lambda + \rho$. This set parametrizes a basis for the highest-weight representation of $\Sp (2r, \mathbb{C})$ with highest weight $\lambda + \rho$.

\begin{theorem}
\label{P:states_bijection_patterns}
The sets\/ $\mathfrak{S}_{\lambda}$ and\/ $\GT_{\mathrm{str}} (\lambda + \rho)$ are in bijective correspondence.
\end{theorem}

\noindent The proof is omitted.

For example, Figure~\ref{fig:state_pattern} shows an element of $\mathfrak{S}_{(2,1,1)}$ and the corresponding element of $\GT_{\mathrm{str}} (5,3,2)$. The entries in the pattern are the column numbers of those vertices in the admissible state having vertical spins of $-$. (Each entry $\ast$, which records the vertex in a bend having a ``vertical'' spin of $-$, is set equal to $0$.) In general, row $i$ (resp., $\overline{i}\,$) in an admissible state of ice gives rise to the row of entries $a_{i-1,i}$, $a_{i-1,i+1}$,~\ldots, $a_{i-1,r}$ (resp., $b_{i,i}$, $b_{i,i+1}$,~\ldots, $b_{i,r}$) of the corresponding strict pattern, and vice versa.

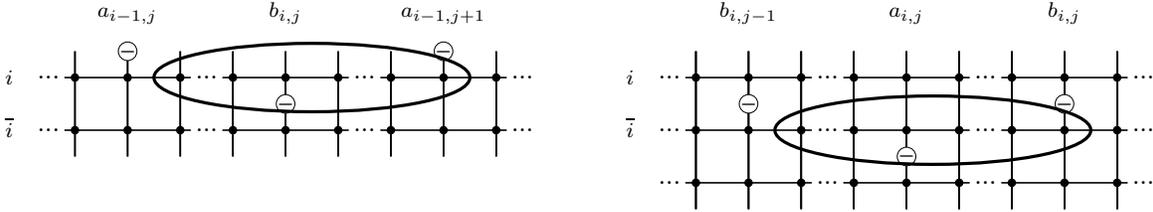
\begin{figure}[ht]
\centering
\makebox[\textwidth]{%
%
}
\]
One can verify that $v_{i,j}$ is the charge at the vertex in row $i$, column $b_{i,j}$ of $\mathfrak{s}$. Thus, the charge at the vertex in the bend connecting rows $(i, \overline{i} \,)$ is $v_{i,r}$. For example, if $\mathfrak{s}$ is the admissible state shown in Figure~\ref{fig:state_pattern}, then $v_{1,3} = (5-4) + (3-2) + (2-0) = 4$, which equals the charge at the vertex in the bend connecting rows $(1, \overline{1}\,)$, as is shown in Figure~\ref{fig:Charge_example_state}. The integer $u_{i,j}$ is the sum $v_{i,r} + w_{i,j}$, and the differences involved in computing $u_{i,j}$ are indicated by the arrows shown below. One can verify that if $i < r$, then $u_{i,j}$ is the charge at the vertex in row $\overline{i}$, column $a_{i,j}$ of $\mathfrak{s}$. 
\[
	\mathord{%
\begin{tikzpicture}[>=stealth', scale=0.7, every node/.style={scale=1.0}]
\matrix (m) [matrix of math nodes]
{
	\text{\parbox[t]{\widthof{$a_{i-1,j+1}$}}{\centering$a_{i-1, i}$}}
	& {} 
	& \text{\parbox[t]{\widthof{$a_{i-1,j+1}$}}{\centering$a_{i-1,i+1}$}}
	& \ldots 
	& \text{\parbox[t]{\widthof{$a_{i-1,j+1}$}}{\centering$a_{i-1,j-1}$}}
	& {} 
	& \text{\parbox[t]{\widthof{$a_{i-1,j+1}$}}{\centering$a_{i-1,j}$}}
	& \ldots & {} 
	& a_{i-1, r} 
	& {}
	\\
	{} & b_{i,i} & {} & b_{i,i+1} & \ldots 
	& b_{i,j-1} & {} & b_{i,j} & \ldots 
	& {} & b_{i,r}
	\\
	{} & {} & a_{i,i+1} & \ldots 
	& a_{i,j-1} & {} & a_{i,j} & \ldots 
	& {} & a_{i,r} & {}
	\\
};
\draw[semithick, ->, black] 
	(m-1-1.south) -- (m-2-2.175);
\draw[semithick, ->, black] 
	(m-1-3.south) -- (m-2-4.175);
\draw[semithick, ->, black] 
	(m-1-5.south) -- (m-2-6.175);
\draw[semithick, ->, black] 
	(m-1-7.south) -- (m-2-8.175);
\draw[semithick, ->, black] 
	(m-1-10.south) -- (m-2-11.175);
\draw[semithick, ->, black] 
	(m-3-7.north) -- (m-2-8.185);
\draw[semithick, ->, black] 
	(m-3-10.north) -- (m-2-11.185);
\end{tikzpicture}%
}
\]

\begin{remark}
If $U_q ( \fraksp (2r))$ denotes the quantized universal enveloping algebra of the Lie algebra $\fraksp (2r)$, then the integers $v_{i,j}$, $w_{i,j}$, and $u_{i,j}$ are related to Kashiwara lowering and raising operators in the crystal graph associated to the highest-weight representation for $U_q (\fraksp (2r))$ of highest weight $\lambda + \rho$. (See Littelmann~\cite{Littelmann}.)
\end{remark}

We now introduce some algebraic preliminaries. Let $F$ be a number field containing the $2n$th roots of unity. Let $S$ be a finite set of places of $F$ containing all archimedean places and all places ramified over $\mathbb{Q}$, and suppose $S$ is large enough so that $\mathcal{O}_S  = \{ a \in F \mid \text{$a \in \mathcal{O}_v$ for all $v \notin S$} \}$, the ring of $S$-integers in $F$, is a PID. Let $\mathcal{O}_{S}^{\times}$ be the units in $\mathcal{O}_S$. For every $r$-tuple $\mathbf{m} = (m_1, \ldots, m_r)$ of nonzero integers in $\mathcal{O}_S$, the associated multiple Dirichlet series in the $r$ complex variables $s_1$,~\ldots,~$s_r$ is the sum
\begin{equation}
\label{E:gen_fun}
	\mathcal{Z}_{\Psi} (\mathbf{s}; \mathbf{m}) = \mathcal{Z}_{\Psi} (s_1,\ldots,s_r; \mathbf{m}) =
	\sum_{
	\substack{\mathbf{c} \in (\mathcal{O}_S / \mathcal{O}_{S}^{\times})^r \\ \mathbf{c} = (c_1, \ldots, c_r)}
	}
	\dfrac{ H^{(n)} (\mathbf{c}; \mathbf{m}) \Psi (\mathbf{c}) }{|c_{1}|^{2 s_1} \ldots |c_{r}|^{2 s_r}} \, ,
\end{equation}
where the sum ranges over all nonzero ideals $c_i$ of $\mathcal{O}_S$, the coefficients $H^{(n)} (\mathbf{c}; \mathbf{m})$ are related to the $n$th-power reciprocity law in $F$, and $\Psi$ is some $\mathbb{C}$-valued function defined on $(F_{S}^{\times})^r$. Also $|c_{i}| = |c_{i}|_S$ is the norm of $c_{i} \in \mathcal{O}_S / \mathcal{O}_{S}^{\times}$ as a product of local norms in $F_S = \prod_{v \in S} F_v$.

Given $\mathbf{c} \in (\mathcal{O}_S / \mathcal{O}_S^{\times})^r$ and $\mathbf{m} \in \mathcal{O}_{S}^{r}$, say $\mathbf{c} = (c_1, \ldots, c_r)$ and $\mathbf{m} = (m_1, \ldots, m_r)$, one can determine $H^{(n)}(\mathbf{c}; \mathbf{m})$ by specifying the prime-power coefficients $H^{(n)} (p^{\mathbf{k}}; p^{\boldsymbol{\ell}})$ for a generator $p$ of  some prime ideal in $\mathcal{O}_S$, where $\mathbf{k} = (k_1, \ldots, k_r)$, $\boldsymbol{\ell} = (\ell_1, \ldots, \ell_r)$, and $k_i = \ord_p (c_i)$ and $\ell_i = \ord_p (m_i)$ for all $i$. We therefore focus our attention on the prime-power coefficients.

Throughout this section, fix the $r$ complex variables $s_1$,~\ldots,~$s_r$, and fix a prime $p$ in $\mathcal{O}_S$, i.e., $p$ generates some prime ideal of $\mathcal{O}_S$. Let $q = |\mathcal{O}_S / p \mathcal{O}_S|$, the cardinality of the residue class $\mathcal{O}_S / p \mathcal{O}_S$. Let $\varphi (p^a)$ be the Euler $\varphi$-function for $\mathcal{O}_S / p^a \mathcal{O}_S$; then $\varphi(p^a) = q^{a}(1-q^{-1})$. Let $g_t (p^{\alpha}, p^{\beta})$ be an $n$th-power Gauss sum. In the prime-power coefficient $H^{(n)} (p^{\mathbf{k}}; p^{\boldsymbol{\ell}})$, which we will describe shortly, the components of $\boldsymbol{\ell} = (\ell_1,\ldots,\ell_r)$ satisfy $\lambda = (\ell_1 + \cdots + \ell_r, \ldots, \ell_1 + \ell_2, \ell_1)$, where our fixed $\lambda$ is a dominant integral element for $\Sp (2r, \mathbb{C})$. Write $\lambda + \rho$ as $(L_r,\ldots,L_1)$.

Let $s_{a}(i)$ and $s_b (i)$ be the sums of the $i$th row of, respectively, the $a$- and $b$-entries in $P$: $s_{a}(i) = \sum_{k=i+1}^{r} a_{i,k}$ and $s_{b}(i) = \sum_{k=i}^{r} b_{i,k}$. Let $\wt (P) \vcentcolon= (\wt_1(P), \ldots, \wt_r(P))$, where
\begin{equation}
\label{E:weight_vector}
	\wt_{i}(P) = 
	s_a (r-i) - 2 s_b (r-i+1) + s_a (r-i+1) 
\end{equation}
for all $i$. One can verify that $\wt (P) = (u_{r,r}, \ldots, u_{1,1})$. Let $\mathbf{k}(P) \vcentcolon= (k_1(P),\ldots,k_r(P))$, where
\begin{equation}
\label{E:k_vector}
	k_1(P) = \tfrac{1}{2} \sum_{j=1}^{r} \wt_j(P) + L_j,
	\qquad
	k_i(P) = \sum_{j=i}^{r} \wt_j(P) + L_j ,
\end{equation}
for all $i \neq 1$. Each coefficient $H^{(n)}(p^{\mathbf{k}}; p^{\boldsymbol{\ell}})$ in the generating function $\mathcal{Z}_{\Psi} (\mathbf{s}; p^{\boldsymbol{\ell}})$ is defined as
\begin{equation}
\label{E:prime_coef}
	H^{(n)}(p^{\mathbf{k}}; p^{\boldsymbol{\ell}}) = \sum_{P \colon \mathbf{k}(P) = \mathbf{k}} G (P),
\end{equation}
where the sum ranges over all $P \in \GT_{\mathrm{str}} (\lambda + \rho)$ such that the row sums of $P$ are fixed according to~\eqref{E:weight_vector} and~\eqref{E:k_vector}, and where $G(P)$, which we now define, is a weighting function dependent on $P$. To each entry $a_{i,j}$ in $P$ with $i \geq 1$, let
\begin{align*}
	\gamma( a_{i,j} ) &= 
	\begin{cases}
	\text{\parbox[t]{\widthof{$q^{v_{i,j} - 1} g_{(1 + \delta_{jr})v_{i,j}} (1, p)$}}{$q^{u_{i,j} - 1} g_{u_{i,j}} (1, p)$}}
	&\text{if $a_{i,j} = b_{i,j-1}$,}\\
	q^{u_{i,j}} 
	&\text{if \parbox[t]{\widthof{$a_{i,j} = b_{i,j-1}$}}{$a_{i,j} = b_{i,j}$,}}\\
	\varphi(p^{u_{i,j}})
	&\text{if $b_{i,j} < a_{i,j} < b_{i,j-1}$ and $n \mid u_{i,j}$,}\\
	0	
	&\text{if $b_{i,j} < a_{i,j} < b_{i,j-1}$ and $n \nmid u_{i,j}$,}
	\end{cases}
\intertext{and to each entry $b_{i,j}$ in $P$, let}
	\gamma( b_{i,j} ) &=
	\begin{cases}
	q^{v_{i,j}}	&\text{if \parbox[t]{\widthof{$b_{i,j} = a_{i-1,j+1}$}}{$b_{i,j} = a_{i-1,j}$,}}\\  
	q^{v_{i,j} - 1} g_{(1 + \delta_{jr})v_{i,j}} (1, p)
	&\text{if $b_{i,j} = a_{i-1,j+1}$,}\\ 
	\varphi(p^{v_{i,j}})	
	&\text{if $a_{i-1,j+1} < b_{i,j} < a_{i-1,j}$ and $n \mid (1+\delta_{jr})v_{i,j}$,}\\
	0	
	&\text{if $a_{i-1,j+1} < b_{i,j} < a_{i-1,j}$ and $n \nmid (1+\delta_{jr})v_{i,j}$,}
	\end{cases}
\end{align*}
where $\delta$ is the Kronecker delta function. Let
\[
	G(P) = 
	\prod_{1 \leq i \leq j \leq r} 
	\gamma(a_{i,j}) \gamma(b_{i,j}),
\]
where we set each $\gamma(a_{i,i})$ equal to $1$, since $a_{i,i}$ is not in $P$.

Rather than working with $H^{(n)}$, we will work with $\smash{ \widetilde{H}^{(n)} }$, a ``normalization'' of $H^{(n)}$, which we now define. To each of $a_{i,j}$ and $b_{i,j}$ in $P$, set $\widetilde{\gamma}(a_{i,j}) \vcentcolon = q^{-u_{i,j}} \gamma(a_{i,j})$ and $\widetilde{\gamma}(b_{i,j}) \vcentcolon = q^{-v_{i,j}} \gamma(b_{i,j})$. Let 
\[
	\widetilde{H}^{(n)}(p^{\mathbf{k}}; p^{\boldsymbol{\ell}}) = \sum_{P \colon \mathbf{k}(P) = \mathbf{k}} \widetilde{G} (P),
\]
where $\smash{\widetilde{G} (P) = \prod \widetilde{\gamma} (a_{i,j}) \widetilde{\gamma} (b_{i,j}) }$. Then $\smash{ H^{(n)} = \widetilde{H}^{(n)}q^{k_1+\cdots+k_r}}$ by the following lemma.

\begin{lemma}
For every $P \in \GT_{\mathrm{str}} (\lambda + \rho)$,
\[
	\sum_{i=1}^{r} k_i(P) = \sum_{i=1}^{r} \left[ \, \sum_{j=i+1}^{r} u_{i,j} + \sum_{j=i}^{r} v_{i,j} \right],
\]
where $k_1(P)$,~\ldots,~$k_r(P)$ are defined according to~\eqref{E:k_vector}. 
\end{lemma}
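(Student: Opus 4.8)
The identity is purely combinatorial, so the plan is to expand both sides as linear combinations of the pattern entries $a_{i,j}$ and $b_{i,j}$ and match coefficients. First I would rewrite the left-hand side by interchanging the order of summation: combining the two cases of \eqref{E:k_vector} (reading $k_i(P)$ as a sum of the quantities $\wt_j(P)+L_j$) gives
\[
\sum_{i=1}^{r} k_i(P) = \tfrac{1}{2}\sum_{j=1}^{r}\bigl(\wt_j(P)+L_j\bigr) + \sum_{i=2}^{r}\sum_{j=i}^{r}\bigl(\wt_j(P)+L_j\bigr) = \sum_{j=1}^{r}\Bigl(j-\tfrac{1}{2}\Bigr)\bigl(\wt_j(P)+L_j\bigr),
\]
since each term with index $j$ is counted once with weight $\tfrac12$ and once for every $i\in\{2,\dots,j\}$. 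I would then split this into a ``$\wt$-part'' and an ``$L$-part'' and treat them separately.

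For the right-hand side I would expand $v_{i,j}$ and $u_{i,j}=v_{i,r}+w_{i,j}$ via \eqref{E:uvw} and swap the order of each inner summation, obtaining $\sum_{j=i}^{r} v_{i,j}=\sum_{k=i}^{r}(r-k+1)(a_{i-1,k}-b_{i,k})$ and $\sum_{j=i+1}^{r} u_{i,j}=(r-i)\,v_{i,r}+\sum_{k=i+1}^{r}(k-i)(a_{i,k}-b_{i,k})$. Adding these, summing over $i$, and collecting the coefficient of each pattern entry is the heart of the argument. The key point is that each interior $a$-entry is hit by two distinct inner sums whose $k$-dependence cancels: $a_{m,k}$ with $1\le m\le r-1$ acquires total coefficient $(2r-m-k)+(k-m)=2(r-m)$, every $b_{i,k}$ acquires coefficient $-(2r-2i+1)$, and each top-row entry $a_{0,k}$ acquires coefficient $2r-k$. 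Writing the result with $s_a(m)=\sum_{k=m+1}^r a_{m,k}$ and $s_b(i)=\sum_{k=i}^r b_{i,k}$ then yields a clean closed form for the right-hand side.

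Finally I would expand the $\wt$-part of the left-hand side through \eqref{E:weight_vector}, reindex by $m=r-j$, and collect coefficients of $s_a(m)$ and $s_b(i)$; the half-integers recombine so that the interior coefficients $2(r-m)$ and the $b$-coefficients $-(2r-2i+1)$ match the right-hand side exactly, while the top row contributes only $(r-\tfrac12)\sum_k a_{0,k}$ in place of $\sum_k (2r-k)a_{0,k}$. The residual discrepancy is therefore $\sum_{k=1}^{r}\bigl(r-k+\tfrac12\bigr)a_{0,k}$, which I would cancel against the $L$-part: since the top row of $P$ is $\lambda+\rho=(L_r,\dots,L_1)$, we have $a_{0,k}=L_{r-k+1}$, and the substitution $j=r-k+1$ converts $\sum_k (r-k+\tfrac12)a_{0,k}$ into precisely $\sum_j (j-\tfrac12)L_j$, completing the match. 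I expect the main obstacle to be the coefficient bookkeeping in the second step---tracking how each interior entry is reached by two different inner sums and confirming that the half-integer weights coming from $k_1(P)$ and from \eqref{E:weight_vector} align with the top-row correction supplied by $L_j$; a single numerical check with $r=3$ against Figure~\ref{fig:state_pattern} would guard against sign and index slips.
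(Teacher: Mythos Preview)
Your coefficient-matching argument is correct: the reindexing of $\sum_i k_i(P)$ to $\sum_j (j-\tfrac12)(\wt_j(P)+L_j)$, the expansion of the right-hand side to obtain coefficients $2(r-m)$, $-(2r-2i+1)$, and $2r-k$ on $a_{m,k}$, $b_{i,k}$, and $a_{0,k}$ respectively, and the final cancellation of the top-row discrepancy against the $L$-part via $a_{0,k}=L_{r-k+1}$ all check out.

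Note, however, that the paper does not actually supply a proof of this lemma; it simply cites \cite{BeBrFr2}. So there is nothing in the paper to compare your argument against. Your direct expansion is a perfectly good self-contained proof, and in fact is essentially the only natural way to verify such an identity. The one minor comment is stylistic: once you have the right-hand side written as $\sum_{m=0}^{r-1} c_m\, s_a(m) - \sum_{i=1}^{r}(2r-2i+1)\,s_b(i)$ with $c_0=\sum_k(2r-k)$ acting entrywise, you could alternatively absorb the $L_j$ into $s_a(0)$ from the start (since $L_j$ and $a_{0,k}$ are literally the same numbers), avoiding the separate ``$L$-part'' bookkeeping; but your split works fine.
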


\noindent The proof is given in~\cite{BeBrFr2}.

\begin{theorem}
\label{T:sym_pattern_bij}
Continue using the notation above. Set the parameter $v$ equal to $q^{-1}$, and write $\mathbf{z} = (z_1,\ldots,z_r)$. Then
\[
	\mathcal{Z}_{\Psi}(\mathbf{s}; p^{\boldsymbol{\ell}}) = 
	\mathbf{z}^{\lambda + \rho} \sum_{\mathbf{k} = (k_1,\ldots,k_r)} \Biggl[  \Psi (p^{\mathbf{k}}) \sideset{}{'}\sum_{\mathfrak{s} \in \mathfrak{S}_{\lambda}} \wt (\mathfrak{s}) \Biggr],
\]
where the primed sum ranges over the $\mathfrak{s}$ in $\mathfrak{S}_{\lambda}$ that correspond to those $P$ in $\GT_{\mathrm{str}} (\lambda + \rho)$ satisfying $\mathbf{k}(P) = \mathbf{k}$. The $\mathbf{s}=(s_1,\ldots,s_r)$ and $\mathbf{m}=(m_1,\ldots,m_r)$ are related by $q^{1-2s_1} = z_{r}^2$ and $q^{1-2s_i} = z_{r-i+1}^{\vphantom{-1}} / z_{r-i+2}$ for every $i > 1$.
\end{theorem}

\begin{proof}
Let $\mathfrak{s} \in \mathfrak{S}_{\lambda}$ correspond to $P \in \GT_{\mathrm{str}}(\lambda + \rho)$. For each entry $x_{i,j}$ in $P$, we will associate to $\widetilde{\gamma}(x_{i,j})$ the Boltzmann weight of a vertex in either rows $i$ or $\overline{i}$ of $\mathfrak{s}$, but we will exclude any factor of $z_{i}^{\pm 1}$ in this weight, compensating for all the $z_{i}^{\pm 1}$ later. We will keep track of all these $\smash{ z_{i}^{\pm 1} }$ as follows:
\begin{itemize}
\item Given $b_{i,j}$, we count factors of $z_i$ that are part of the weights of those vertices in row~$i$ between columns $a_{i-1,j}$ and $a_{i-1,j+1}$, including column $a_{i-1,j+1}$. If $b_{i,j} = a_{i-1,j}$, we include this column. (See the left side of Figure~\ref{fig:excluded}.)
\item Given $a_{i,j}$, we count factors of $z_i^{-1}$ that are part of the weights of those vertices in row~$\overline{i}$ between columns $b_{i,j-1}$ and $b_{i,j}$, including column $b_{i,j}$. If $a_{i,j} = b_{i-1,j}$, we include this column. (See the right side of Figure~\ref{fig:excluded}.)
\end{itemize}

\begin{figure}[htb]
\centering
\makebox[\textwidth][c]{%
%
}
\caption{Given $b_{i,j}$ (left) or $a_{i,j}$ (right), the ellipses show those vertices that are considered when counting the number of excluded factors of $\smash{z_{i}^{\pm 1}}$.}
\label{fig:excluded}
\end{figure}

Note that $a_{i,i+1}$ and $b_{i,i}$ in $P$ correspond to vertices whose weights do not include $\smash{z_{i}^{\pm 1}}$, so we need not worry about missing a factor of $z_{i}^{\pm 1}$ for these leftmost entries. Also each vertex in row $i$ (resp., $\overline{i}\,$) to the left of column $a_{i-1,i}$ (resp., $b_{i,i}$) has a weight of $1$, so we need not worry about this vertex. In the figures below, we will indicate  by {\small$\square$} the vertex in $\mathfrak{s}$ that gives rise to the entry in $P$ under consideration, and we will denote by $v$ the vertex directly above that vertex.

Let $b_{i,j}$ be an entry in $P$. There are four cases.

\textbf{Case 1:} $b_{i,j} = a_{i-1,j}$. Then $v$ is a $b_1$-vertex. If $j \neq r$, rows $(i, \overline{i}\,)$ are shown on the left side of Figure~\ref{fig:b_left}, where $a = v_{i,j}$. Since $\wt(v) = 1$ and $\widetilde{\gamma} (b_{i,j}) = 1$, we let $\widetilde{\gamma} (b_{i,j})$ correspond to $\wt(v)$. Each vertex in row $i$ strictly between columns $a_{i-1,j-1}$ and $a_{i-1,j+1}$ has a weight of $1$.

Note that the vertex in row $i$, column $a_{i-1,j+1}$ could be a $c_2$-vertex with charge $a$, so its weight would be $\delta (a) = 0$ if $n \nmid a$, which it seems would kill off the bijection we are trying to establish. However, row $i$ would contain a vertex with charge $a$ and weight $h(a) z_i$, so whether $n \mid a$ or $n \nmid a$ is handled at this vertex; such a vertex is dealt with in Case 3.

If $j = r$, the rows are shown on the right side of Figure~\ref{fig:b_left}, where $a = v_{i,r}$. The same correspondence is taken. Each vertex in row $i$ to the right of column $b_{i,r}$ has a weight of $1$. (Later we will worry about the weight of the vertex in this type of bend.)

In general, $\widetilde{\gamma} (b_{i,j}) \longleftrightarrow \wt(v)$. The number of factors of $z_i$ excluded is $\smash{ a_{i-1,j} - b_{i,j} }$.

\begin{figure}[H]
\centering
\makebox[\textwidth][c]{%
%
}
\caption{}
\label{fig:b_left}
\end{figure}

\textbf{Case 2:} $b_{i,j} = a_{i-1,j+1}$. If $j \neq r$, rows $(i, \overline{i}\,)$ are shown on the left side of Figure~\ref{fig:b_right}, where $a = v_{i,j}$. Then $v$ is an $a_2$-vertex. Since $\wt (v) = g(v_{i,j}) z_i$ and $\widetilde{\gamma}(b_{i,j}) = q^{- 1} g_{v_{i,j}} (1, p)$, we let $\widetilde{\gamma}(b_{i,j})$ correspond to $\wt(v)/z_i$. Each vertex in row $i$ strictly between columns $a_{i-1,j}$ and $b_{i,j}$ has a weight of $z_i$. The number of factors of $z_i$ excluded is $a_{i-1,j} - b_{i,j} $.

If $j = r$, then $b_{i,j} = 0$. The rows are shown on the right side of Figure~\ref{fig:b_right}, where $a = v_{i,r}$. Then $v$ is the vertex in the bend. Since $\wt(v) = g(2v_{i,r}) z_i$ and $\widetilde{\gamma}(b_{i,r}) = q^{- 1} g_{2v_{i,r}} (1, p)$, we let $\widetilde{\gamma}(b_{i,r})$ correspond to $\wt(v)/z_i$. Each vertex in row $i$ to the right of column $a_{i-1,r}$ has a weight of $z_i$. The number of factors of $z_i$ excluded is $a_{i-1, r}$.

In general, $\widetilde{\gamma}(b_{i,j}) \longleftrightarrow \wt(v)/z_i$, and the number of factors of $z_i$ excluded is $\smash{ a_{i-1, j} - b_{i, j} }$.

\begin{figure}[H]
\centering
\makebox[\textwidth][c]{%
%
}
\caption{}
\label{fig:b_right}
\end{figure}

\textbf{Case 3:} $a_{i-1,j+1} < b_{i,j} < a_{i-1,j}$ and $n \mid (1+\delta_{jr})v_{i,j}$. Since $n$ is odd, $n \mid v_{i,j}$. Then $v$ is a $c_1$-vertex. If $j \neq r$, rows $(i, \overline{i} \,)$ are shown on the left side of Figure~\ref{fig:b_neither1_divisible}, where $a = v_{i,j}$. Since $\wt(v) = (1-v)z_i$ (the $v$ on the right side of the previous equation is the fixed parameter) and $\widetilde{\gamma} (b_{i,j}) = 1-q^{-1}$, we let $\widetilde{\gamma} (b_{i,j})$ correspond to $\wt(v)/z_i$ after setting $q^{-1}$ equal to the parameter $v$. Each vertex in row $i$ strictly between columns $a_{i-1,j}$ and $b_{i,j}$ (resp., $b_{i,j}$ and $a_{i-1,j+1}$) has a weight of $z_i$ (resp., $1$). The number of factors of $z_i$ excluded is $a_{i-1,j} - b_{i,j}$.

If $j = r$, then $b_{i,j} \neq 0$. The rows are shown on the right side of Figure~\ref{fig:b_neither1_divisible}, where $a = v_{i,r}$. The same correspondence is taken. Each vertex in row $i$ strictly between columns $a_{i-1,r}$ and $b_{i,r}$ has a weight of $z_i$, and each vertex to the right of column $b_{i,r}$ has a weight of $1$. The number of factors of $z_i$ excluded is $a_{i-1, r} - b_{i, r}$.

In general, $\widetilde{\gamma} (b_{i,j}) \longleftrightarrow \wt(v)/z_i$, and the number of factors of $z_i$ excluded is $\smash{ a_{i-1, j} - b_{i, j} }$.

\begin{figure}[H]
\makebox[\textwidth][c]{%
%
}
\caption{}
\label{fig:b_neither1_divisible}
\end{figure}

\textbf{Case 4:} $a_{i-1,j+1} < b_{i,j} < a_{i-1,j}$ and $n \nmid (1+\delta_{jr})v_{i,j}$. Then $\wt(v) = 0$ and $\widetilde{\gamma}(b_{i,j}) = 0$. The number of factors of $z_i$ excluded is the same as before.

We have taken into account the weights of all vertices in row $i$. In summary, we have the following correspondence
\begin{equation}
\label{E:association_b}
	\widetilde{\gamma}( b_{i,j} )
	\quad \longleftrightarrow \quad
	\begin{cases}
	1	
	&\text{if $b_{i,j} = a_{i-1,j}$,}\\
	q^{-1}g((1 + \delta_{jr}) v_{i,j})
	&\text{if $b_{i,j} = a_{i-1,j+1}$,}\\
	1-v
	&\text{if $a_{i-1,j+1} < b_{i,j} < a_{i-1,j}$ and $n \mid (1+\delta_{jr})v_{i,j}$,}\\
	0	
	&\text{if $a_{i-1,j+1} < b_{i,j} < a_{i-1,j}$ and $n \nmid (1+\delta_{jr})v_{i,j}$.}
	\end{cases}
\end{equation}

Let $a_{i,j}$ be an entry in $P$ with $i \geq 1$. There are four cases. Note that if $b_{i,r} \neq 0$, there are vertices in row~$\overline{i}$ to the right of column $b_{i,r}$. So when analyzing the entry $a_{i,r}$, we will need to make sure we include in our calculations the Boltzmann weights of these vertices. Then the weights of all vertices in row $\overline{i}$, including the other type of bend, will be included.

\textbf{Case 1:} $a_{i,j} = b_{i,j-1}$. Then $v$ is a $b_1$-vertex. If $j \neq r$, rows $(i, \overline{i} \,)$ are shown in Figure~\ref{fig:a_left1}, where $a = u_{i,j}$. Since $\wt(v) = g(u_{i,j})$ and $\widetilde{\gamma} (a_{i,j}) = q^{- 1} g_{u_{i,j}} (1, p)$, we let $\widetilde{\gamma} (a_{i,j})$ correspond to $\wt(v)$. Each vertex in row $\overline{i}$ strictly between columns $a_{i,j}$ and $b_{i,j}$ has a weight of $1$.

The vertex in row $\overline{i}$, column $b_{i,j}$ is either a $b_1$-vertex or a $c_2$-vertex. If it is a $b_1$-vertex, then it can be omitted in the current discussion, since it would be included when dealing with the entry $a_{i,j+1}$. Suppose it is a $c_2$-vertex with some charge of $c$. Since $c$ might satisfy $n \nmid c$, in which case its weight would be $\delta (c) = 0$, it seems this might kill off the bijection we are trying to establish. However, row~$i$ or row $\overline{i}$ will then contain a vertex with charge $c$ and weight $h(c) z_{i}^{\pm 1}$, so whether or not $n \nmid c$ is handled at this vertex.

\begin{figure}[H]
\centering
\makebox[\textwidth][c]{%
%
}
\caption{}
\label{fig:a_left1}
\end{figure}

If $j=r$, the rows are shown on either side of Figure~\ref{fig:a_left2}, where $a = u_{i,r}$. The same correspondence is taken. The left (resp., right) side of the figure is for the case when $b_{i,r} \neq 0$ (resp., $b_{i,r} = 0$). Each vertex in row $\overline{i}$ strictly between columns $a_{i,r}$ and $b_{i,r}$ has a weight of $1$.

We need to take into account the weights of those vertices to the right of column $b_{i,r}$. If $b_{i,r} \neq 0$, the weight of each of these vertices and the vertex in the bend is $z_{i}^{-1}$ (the weight of the vertex in the other type of bend, which appears on the right side of Figure~\ref{fig:a_left2}, was dealt with previously). The number of these factors of $z_{i}^{-1}$ is $b_{i,r}$, and this number will be included in our total after Case 4.

In general, $\widetilde{\gamma}(a_{i,j}) \longleftrightarrow \wt(v)$, and the number of factors of $\smash{ z_{i}^{-1} }$ excluded is $b_{i,j-1} - a_{i,j}$. In addition, there are $b_{i,r}$ factors of $z_{i}^{-1}$ excluded.

\begin{figure}[H]
\centering
\makebox[\textwidth][c]{%
%
}
\caption{}
\label{fig:a_left2}
\end{figure}

\textbf{Case 2:} $a_{i,j} = b_{i,j}$. Then $v$ is an $a_2$-vertex. If $j \neq r$, rows $(i, \overline{i} \,)$ are shown on the left side of Figure~\ref{fig:a_right}, where $a = u_{i,j}$. Since $\wt(v) = z_{i}^{-1}$ and $\widetilde{\gamma}(a_{i,j}) = 1$, we let $\widetilde{\gamma}(a_{i,j})$ correspond to $\wt(v)/z_{i}^{-1}$. Each vertex in row $\overline{i}$ strictly between columns $b_{i,j-1}$ and $a_{i,j}$ has a weight of $\smash{ z_{i}^{-1} }$. The number of factors of $\smash{ z_{i}^{-1}}$ excluded is $b_{i,j-1} - a_{i,j} $.

If $j = r$, then $b_{i,r} \neq 0$. The rows are shown on the right side of Figure~\ref{fig:a_right}, where $a = u_{i,r}$. The same correspondence is taken. Each vertex in row $\overline{i}$ strictly between columns $b_{i, r-1}$ and $a_{i,r}$ has a weight of $\smash{ z_{i}^{-1} }$. The number of factors of $\smash{ z_{i}^{-1} }$ excluded is $b_{i, r-1} - a_{i,r}$. In addition, each vertex to the right of column $b_{i,r}$ (including the vertex in the bend) has a weight of $z_{i}^{-1}$. As in Case 1, the number of these factors of $z_{i}^{-1}$ is $b_{i,r}$, and this number will be included in our total after Case 4.

In general, $\widetilde{\gamma}(a_{i,j}) \longleftrightarrow \wt(v)/z_{i}^{-1}$. The number of factors of $\smash{ z_{i}^{-1} }$ excluded is $b_{i, j-1} - a_{i, j}$.  In addition, there are $b_{i,r}$ factors of $\smash{ z_{i}^{-1} }$ excluded.

\begin{figure}[H]
\centering
\makebox[\textwidth][c]{%
%
}
\caption{}
\label{fig:a_right}
\end{figure}

\textbf{Case 3:} $b_{i,j} < a_{i,j} < b_{i,j-1}$ and $n \mid u_{i,j}$. Then $v$ is a $c_1$-vertex. If $j \neq r$, rows $(i, \overline{i} \,)$ are shown in Figure~\ref{fig:a_neither1_divisible}, where $a = u_{i,j}$. Since $\wt(v) = (1-v) z_{i}^{-1}$ and $\widetilde{\gamma} (a_{i,j}) = 1-q^{-1}$, we let $\widetilde{\gamma} (a_{i,j})$ correspond to $\wt(v)/z_{i}^{-1}$ after setting $q^{-1}$ equal to the parameter $v$. Each vertex in row $\overline{i}$ strictly between columns $b_{i, j-1}$ and $a_{i,j}$ (resp., $a_{i,j}$ and $b_{i,j}$) has a weight of $\smash{ z_{i}^{-1} }$ (resp., $1$). The number of factors of $z_{i}^{-1}$ excluded is $\smash{ b_{i, j-1} - a_{i,j} }$.

If $j=r$, the rows are shown on either side of Figure~\ref{fig:a_neither1_divisible2}, where $a=u_{i,r}$. The same correspondence is taken. The left (resp., right) side of the figure is for the case when $b_{i,r} \neq 0$ (resp., $b_{i,r} = 0$). Each vertex in row $\overline{i}$ strictly between columns $b_{i,r-1}$ and $a_{i,r}$ (resp., $a_{i,r}$ and $b_{i,r}$) has a weight of $z_{i}^{-1}$ (resp., $1$). So the number of factors of $\smash{ z_{i}^{-1} }$ excluded is $\smash{ b_{i, r-1} - a_{i,r} }$.

\begin{figure}[H]
\makebox[\textwidth][c]{%
%
}
\caption{}
\label{fig:a_neither1_divisible}
\end{figure}

If $b_{i,r} \neq 0$, each vertex in row $\overline{i}$ to the right of column $b_{i,r}$, including the vertex in the bend, has a weight of $z_{i}^{-1}$. (The vertex in the other type of bend, which appears on the right side of Figure~\ref{fig:a_neither1_divisible2}, was dealt with previously.) As in Case 1, the number of these factors of $z_{i}^{-1}$ is $b_{i,r}$, and this number will be included in our total after Case 4.

In general, $\widetilde{\gamma}(a_{i,j}) \longleftrightarrow \wt(v)/z_{i}^{-1}$. The number of factors of $\smash{ z_{i}^{-1} }$ excluded is $b_{i, j-1} - a_{i, j}$.  In addition, there are $b_{i,r}$ factors of $z_{i}^{-1}$ excluded.

\begin{figure}[H]
\makebox[\textwidth][c]{%
%
}
\caption{}
\label{fig:a_neither1_divisible2}
\end{figure}

\textbf{Case 4:} $b_{i,j} < a_{i,j} < b_{i,j-1}$ and $n \nmid u_{i,j}$. Then $\wt(v) = 0$ and $\widetilde{\gamma} (a_{i,j}) = 0$. The number of factors of $\smash{ z_{i}^{-1}}$ excluded is the same as before.

We have taken into account the weights of all vertices in row $\overline{i}$. In summary, we have the following correspondence:
\begin{equation}
\label{E:association_a}
	\widetilde{\gamma}( a_{i,j} )
	\quad \longleftrightarrow \quad
	\begin{cases}
	q^{-1}g(u_{i,j})
	&\text{if $a_{i,j} =b_{i,j-1}$,}\\
	1
	&\text{if $a_{i,j} = b_{i,j}$,}\\
	1-v
	&\text{if $b_{i,j} < a_{i,j} < b_{i,j-1}$ and $n \mid u_{i,j}$,}\\
	0	
	&\text{if $b_{i,j} < a_{i,j} < b_{i,j-1}$ and $n \nmid u_{i,j}$.}
	\end{cases}
\end{equation}

Up to now, we have obtained from $P$ the Boltzmann weights of all vertices in $\mathfrak{s}$ except for those in row $\overline{r}$ (and for the vertex in one type of bend connected to this row). We now consider this row. Only $a_1$-, $b_2$-, and $c_2$-vertices can appear in row $\overline{r}$. Since the weight of any such $a_1$- or $b_2$-vertex is $1$ after excluding factors of $z_{r}^{-1}$, we need only look at the weight of any $c_2$-vertex in the row. Let $v$ be such a vertex, which is in column $b_{r,r}$, and let $c$ be the charge at $v$. If $b_{r,r} = a_{r-1,r}$, then $\wt(v) = 1$. If $b_{r,r} \neq a_{r-1,r}$, then there is a $c_1$-vertex $v'$ in row $r$ with charge $c$. In either case, omitting $\wt(v)$ does not change $\wt (\mathfrak{s})$: if $\wt(v) \neq 1$, then $\wt(v) = 0$, but then $\wt(v') = 0$ also. So the only contributions to $\wt (\mathfrak{s})$ from row $\overline{r}$ are factors of $z_{r}^{-1}$. One can verify that there are $b_{r,r}$ such factors (the argument is similar to those made in Cases 1--4 for the entry $a_{i,j}$).

We now count all the factors of both $z_{i}$ and $z_{i}^{-1}$ in $\wt(\mathfrak{s})$ that have been excluded.

\begin{lemma}
\label{thm:count}
Let $v_{i,j}$, $w_{i,j}$, and $u_{i,j}$ be defined as in~\eqref{E:uvw}. Let $i \in \{ 1, \ldots, r \}$.
\begin{enumerate}
\item The product of all the $z_i$ that are factors of\/ $\wt(\mathfrak{s})$ is $z_{i}^{ v_{i,r} }$. 
\item The product of all the $z_{i}^{-1}$ that are factors of\/ $\wt(\mathfrak{s})$ is $z_{i}^{ w_{i,i} }$.
\item The product of all the $z_i$ and $z_{i}^{-1}$ that are factors of\/ $\wt(\mathfrak{s})$ is $z_{i}^{u_{i,i}}$.
\end{enumerate} 
\end{lemma}
\begin{proof}[Proof of Lemma~\ref{thm:count}]
Based on Cases 1--4 above for an entry $b_{i,j}$, the number of factors of $z_i$ is $\sum_{k=i}^{r} (a_{i-1,k} - b_{i,k})$, which equals $v_{i,r}$. This proves \textbf{(a)}. Based on Cases 1--4 above for an entry $a_{i,j}$ and on the discussion that precedes this lemma, the number of factors of $z_{i}^{-1}$ is $\sum_{k=i}^{r} (b_{i,k} - a_{i,k+1})$, which equals $- w_{i,i}$ (here $a_{i,r+1} = 0$). This proves \textbf{(b)}. Then \textbf{(c)} follows: the product of all such $z_i$ and $\smash{ z_{i}^{-1 \vphantom{-v_{i,r}}} }$ is $\smash{ z_{i}^{ v_{i,r} \vphantom{-}}  (z_{i}^{-1 \vphantom{-v_{i,r}}}) { \vphantom{z_{i}^{v_{i,r}}} }^{-w_{i,i}} }$, which equals $\smash{ z_{i}^{u_{i,i}} }$. 
\end{proof}

Set $y_i \vcentcolon = |p|^{-2s_i}$ for every $i$. Then
\begin{align*}
	\mathcal{Z}_{\Psi}(\mathbf{s}; p^{\boldsymbol{\ell}})
	&=
	\sum_{\mathbf{k} = (k_1,\ldots,k_r)}
	\dfrac{H^{(n)} (p^{\mathbf{k}}; p^{\boldsymbol{\ell}}) \Psi (p^{\mathbf{k}})}{|p|^{2 k_1 s_1} \ldots |p|^{2 k_r s_r}} \\
	&=
	\sum_{\mathbf{k} = (k_1,\ldots,k_r)}
	\widetilde{H}^{(n)} (p^{\mathbf{k}}; p^{\boldsymbol{\ell}}) \Psi (p^{\mathbf{k}}) (qy_{1})^{k_1} \ldots (qy_{r})^{k_r} ,
\end{align*}
where each sum ranges over the finitely many $\mathbf{k}$ such that $\smash{ H^{(n)} (p^{\mathbf{k}}; p^{\boldsymbol{\ell}}) }$ has nonzero support for fixed $\boldsymbol{\ell}$ by~\eqref{E:prime_coef}. Make the change of variables $q y_1 \mapsto x_{1}^2$, $q y_2 \mapsto x_{1}^{-1}x_2$, \ldots, $q y_r \mapsto x_{r-1}^{-1}x_r$. Then
\begin{align*}
	\mathcal{Z}_{\Psi}(\mathbf{s}; p^{\boldsymbol{\ell}}) 
	&=
	\sum_{\mathbf{k} = (k_1,\ldots,k_r)} \widetilde{H}^{(n)} (p^{\mathbf{k}}; p^{\boldsymbol{\ell}}) \Psi (p^{\mathbf{k}}) x_{1}^{\wt_1 + L_1} \ldots x_{r}^{\wt_r + L_r} \\
	&=
	x_{1}^{L_1} \ldots x_{r}^{L_r} \sum_{\mathbf{k} = (k_1,\ldots,k_r)} \Biggl[  \Psi (p^{\mathbf{k}}) \sum_{P \colon \mathbf{k}(P)=\mathbf{k}} \widetilde{G}(P) x_{1}^{\wt_1} \ldots x_{r}^{\wt_r} \Biggr] \\
	&=
	x_{1}^{L_1} \ldots x_{r}^{L_r} \sum_{\mathbf{k} = (k_1,\ldots,k_r)} \Biggl[  \Psi (p^{\mathbf{k}}) \sum_{P \colon \mathbf{k}(P)=\mathbf{k}} \widetilde{G}(P) x_{1}^{u_{r,r}} \ldots x_{r}^{u_{1,1}} \Biggr].
\end{align*}
If we write as an equality the correspondence $\longleftrightarrow$ given in the proof of Theorem~\ref{T:sym_pattern_bij}, it then follows from~\eqref{E:association_b},~\eqref{E:association_a}, and Lemma~\ref{thm:count} that
\[
	\widetilde{G}(P) = \dfrac{\wt(\mathfrak{s})}{z_{1}^{u_{1,1}} \ldots z_{r}^{u_{r,r}}} \, .
\]
Make the change of variables $x_i \mapsto z_{r-i+1}$ for every $i$. Then
\[
	\mathcal{Z}_{\Psi}(\mathbf{s}; p^{\boldsymbol{\ell}}) = 
	z_{1}^{L_r} \ldots z_{r}^{L_1} \sum_{\mathbf{k} = (k_1,\ldots,k_r)} \Biggl[  \Psi (p^{\mathbf{k}}) \sideset{}{'}\sum_{\mathfrak{s} \in \mathfrak{S}_{\lambda}} \wt (\mathfrak{s}) \Biggr],
\]
where the primed sum ranges over the $\mathfrak{s}$ in $\mathfrak{S}_{\lambda}$ that correspond to those $P$ in $\GT_{\mathrm{str}} (\lambda + \rho)$ satisfying $\mathbf{k}(P) = \mathbf{k}$. This proves the theorem.
\end{proof}


\section{Functional Equations for the Partition Function}
\label{sec:Function_Eqns_Partition}

In this section, we prove two functional equations involving $Z$ that demonstrate the action of the Weyl group of type B/C acting on $z_1$,~\ldots,~$z_r$ by the transposition $z_i \leftrightarrow z_{i+1}$ and by the transformation $z_{r} \leftrightarrow z_{r}^{-1}$.

Let $\mathbf{z} = (z_1, \ldots, z_r) \in \mathbb{C}^r$ and $\mathbf{c} = (c_1, \ldots, c_r) \in \mathbb{Z}^r$, where $c_i \in [0,n-1]$ for each~$i$. Denote by $Z_{\lambda} (\mathbf{z}; \mathbf{c})$ the partition function of the model with boundary conditions determined by $\lambda$, such that for each admissible state, the leftmost edges in rows $\overline{1}$,~\ldots,~$\overline{r}$ have charges congruent modulo $n$ to $c_1$,~\ldots,~$c_r$, respectively.

\begin{theorem}
\label{T:transposition}
Let $i$ be a row spectral index with $i < r$, and put $j = i+1$. Let $s_i$ be the simple reflection $i \leftrightarrow j$. Let $e \equiv c_i - c_j \pod{n}$ with $e \in [0, n-1]$. If $c_i \neq c_{j}$, then 
\begin{equation}
\label{E:identity1}
\begin{split}
	(1-v) z_{i}^{n-e} z_{j}^{e}
	\, Z 
	\left(\!
	\mathord{%
%
	}
	\,\right) .
\]
\end{theorem}

\noindent The row labels indicate the spectral parameters for the Boltzmann weights of the vertices in the rows.

\begin{proof}
For brevity, we will write $Z_{\lambda}$ simply as $Z$. Let $B$, $I_1$, $I_2$ be the configurations in Figure~\ref{fig:Alpha}, where $I_2$ is obtained by attaching $B$ to $I_1$, and where the other rows of $I_1$ and $I_2$ match, alternate in ice type, and satisfy the following: for each $k$ besides $i$ and $j$, the leftmost edges in rows $\smash{(k,\overline{k} \, )}$ have fixed decorated spins $(+0, + c_k)$.

\begin{figure}[htb]
\centering
\makebox[\textwidth][c]{%
%
}
\caption{}
\label{fig:Alpha}
\end{figure}

If $c_i \neq c_j$, the only admissible states of $B$ are the first two in Figure~\ref{fig:states_caduceusB}. If $c_i = c_j$, the only admissible state of $B$ is the last one in Figure~\ref{fig:states_caduceusB}. Each admissible state of $I_2$ yields unique admissible states of $B$ and $I_1$, where the rightmost decorations on $B$ match the leftmost decorations on $I_1$. Similarly, each admissible state of $I_1$, with leftmost decorations in rows $(i, \overline{i} \,)$ and $(j, \overline{j} \,)$ matching the rightmost decorations in an admissible state of $B$, yields a unique admissible state of $I_2$. It follows that
\begin{align*}
	Z(I_2) &= 
	(z_{j}^{-n} - v^n z_{i}^n) 
	(z_{i}^{-n} - v z_{j}^n) 
	(z_{i}^n - v z_{j}^n)
	\\
	&\qquad {}\times
	\begin{cases}
	\wt_{\Gamma \Gamma} 
	\biggl(\!\!
	\mathord{%
%
}
\caption{}
\label{fig:sandman}
\end{figure}

\begin{lemma}[Caduceus relation]
\label{thm:caduceus}
Suppose $\varepsilon_1$,~\ldots,~$\varepsilon_4$ in Figure~\ref{fig:sandman} are fixed. The ratio $Z(I_5)/\wt(I_6)$ is independent of these decorated spins and equals
\begin{equation}
\label{E:cad_result}
	(z_{j}^{-n} - v_{\vphantom{i}}^n z_{i}^n)
	(z_{i}^{-n} - v_{\vphantom{j}} z_{j}^n) 
	(z_{i}^n - v_{\vphantom{j}} z_{j}^n)
	(z_{i}^{-n} - v_{\vphantom{j}} z_{j}^{-n}).
\end{equation}
\end{lemma}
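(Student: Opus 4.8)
The plan is to show that the braid $B$ appearing in $I_5$ can be absorbed, one $R$-vertex at a time, into the two $\Delta\Gamma$-bends on the right, each absorption contributing exactly one of the four factors in \eqref{E:cad_result}. The starting observation is structural: the braid interchanges the strand $(i,\overline{i}\,)$ with the strand $(j,\overline{j}\,)$, so its four $R$-vertices realize the four crossings $i\times j$, $i\times\overline{j}$, $\overline{i}\times j$, $\overline{i}\times\overline{j}$, which are of ice types $\Delta\Delta$, $\Delta\Gamma$, $\Gamma\Delta$, $\Gamma\Gamma$ respectively --- one of each. This is why the target scalar is a product of exactly four factors, and it suggests treating the $R$-vertices separately rather than summing over all interior states at once.

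First I would fix the left boundary spins $\varepsilon_1,\dots,\varepsilon_4$ and use conservation to cut the problem down to a finite check. At every $R$-vertex the admissible configurations of Table~\ref{tab:braid_weights} carry an even number of $-$ spins (equivalently, arrow number is conserved across the vertex), and at each bend the two incident spins must be opposite; these constraints leave only a handful of admissible interior decorated-spin assignments for each choice of boundary. In particular $\wt(I_6)\neq 0$ forces $\varepsilon_1\neq\varepsilon_2$ and $\varepsilon_3\neq\varepsilon_4$, and I would first confirm that $Z(I_5)=0$ exactly when $\wt(I_6)=0$, so that the asserted ratio is well defined in the remaining cases.

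The heart of the argument is a fish-type reduction: summing over the single interior edge shared by the rightmost $R$-vertex of $B$ and the bend it abuts replaces that $R$-vertex together with the bend by the bend alone, multiplied by a scalar that depends only on the two spectral parameters and the ice type of the $R$-vertex, and not on the external decorations. I would verify this directly from Table~\ref{tab:braid_weights} and the $\Delta\Gamma$-bend weights, checking the two admissible bend spins in turn; the key point is that the decoration on the surviving bend edge is forced to agree with the one before the reduction, so the identity propagates cleanly. Iterating this four times peels off the four $R$-vertices in order, and the accumulated scalars are precisely the four factors of \eqref{E:cad_result}. Since each scalar is independent of $\varepsilon_1,\dots,\varepsilon_4$, so is the final ratio $Z(I_5)/\wt(I_6)$.

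The step I expect to be the main obstacle is the decoration bookkeeping in the fish reduction. The $\Delta\Gamma$-bend weights involve $g(2a)$, so the doubling map on charges must be inverted, and matching the post-braid decorations feeding the bends against the plain decorations in $I_6$ requires the Gauss-sum identities $g(a)g(n-a)=v$ and $g(0)=-v$ together with the hypothesis that $n$ is odd (so that $2$ is a unit modulo $n$ and $\delta$, $h$ vanish off multiples of $n$ in a controlled way). Keeping the decorations consistent through all four absorptions, and confirming that the spurious decoration dependence cancels at each stage, is where the real care is needed; the algebraic identities among the Boltzmann weights that make this go through are exactly the ones guaranteed by the Yang--Baxter equation of Theorem~\ref{thm:metaplecticYBE}.
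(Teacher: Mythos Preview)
Your intuition that the four factors of \eqref{E:cad_result} should correspond to the four $R$-vertices of $B$ (one of each ice type $\Delta\Delta$, $\Delta\Gamma$, $\Gamma\Delta$, $\Gamma\Gamma$) is correct and illuminating, but the mechanism you propose for extracting them does not work as stated. A fish reduction requires a single $R$-vertex whose two right strands feed the two edges of one bend; summing over those two internal edges then collapses the pair to a scalar times the bend. In $I_5$ this geometry never occurs. Trace the strands: the top bend receives its upper edge from $v_2$ and its lower edge from $v_1$, while the bottom bend receives its upper edge from $v_1$ and its lower edge from $v_3$. Thus each bend is fed by two different $R$-vertices, and the rightmost vertex $v_1$ straddles both bends rather than abutting either one. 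There is no ``rightmost $R$-vertex of $B$ and the bend it abuts'' to which a fish identity applies, and no sequence of Yang--Baxter moves inside $B$ will produce one, since YBE rearranges crossings among three strands but cannot detach $v_1$ from its position between the two bends. Your outline therefore stalls at the very first absorption step.

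The paper's proof abandons any structural shortcut and proceeds by brute enumeration. Since each bend forces $\varepsilon_1\neq\varepsilon_2$ and $\varepsilon_3\neq\varepsilon_4$, there are four boundary cases. For each one the admissible interior fillings of $I_5$ are listed explicitly (for $(\varepsilon_1,\varepsilon_2,\varepsilon_3,\varepsilon_4)=(+,-,+,-)$ there are five families, two of them parametrized by a decoration $\alpha$ with $\alpha+\beta\equiv 1\pmod n$), their Boltzmann weights are summed using the identities $g(a)g(n-a)=v$ and $g(0)=-v$, and the result is checked to equal $\wt(I_6)$ times the product \eqref{E:cad_result}. The case $(-,+,-,+)$ is the most laborious. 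This is less conceptual than what you had in mind, but it is what the interlocked geometry of the caduceus forces.
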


\noindent See Appendix~\ref{app:appendixCF} for the proof.

Let $P$ be the expression in~\eqref{E:cad_result}. Then
\[
	Z(I_3) 
	= \sum_{\varepsilon_k} Z(I_4) \, Z(I_5)
	= P \sum_{\varepsilon_k} Z(I_4) \, \wt (I_6)
	= P \cdot Z (s_i (\mathbf{z}); \mathbf{c}),
\]
where each sum ranges over all $\varepsilon_1$,~\ldots,~$\varepsilon_4$. Thus, $Z (I_2) = P \cdot Z (s_i (\mathbf{z}); \mathbf{c})$. Then~\eqref{E:identity1} follows after canceling common factors of $Z(I_2)$ and $P$ and multiplying by $z_{i}^n z_{j}^n$.
\end{proof}

\begin{theorem}
\label{thm:inverse}
Let $\mathbf{z} = (z_1, \ldots, z_r) \in \mathbb{C}^r$ and $\mathbf{c} = (c_1, \ldots, c_r) \in \mathbb{Z}^r$, where $c_i \in [0,n-1]$ for every $i$. Put $N = \lambda_1 + r + 1$. Then
\begin{equation}
\label{E:result_inverse}
\begin{split}
	(1-v) z_{r}^{n-2e}
	\, Z
	\left(\!
	\mathord{%
%
	}
	\,\right),
\end{split}
\end{equation}
where $e \equiv c_r - N \pod{n}$, $e \in [0, n-1]$, $C = c_r - 2e$, and $C \in [0, n-1]$. 
\end{theorem}

\noindent Since the proof is lengthy and tedious, we give a brief outline:
\begin{enumerate}[label=\textbf{\arabic*.}]
\item Attach a $\Delta \Gamma$ $R$-vertex $B_1$ to the bottom two rows. Push $B_1$ through via the YBE. Use a ``fish relation'' (Lemma~\ref{thm:Fish1}).
\item Change the ice type of the bottom row from $\Delta$ to $\Gamma$. Show the resulting partition function is invariant under the change.
\item Attach a $\Gamma \Gamma$ $R$-vertex $B_2$ to the bottom two rows. Push $B_2$ through via the YBE. Use a second fish relation (Lemma~\ref{thm:Fish2}).
\item Change the ice type of the bottom row from $\Gamma$ to $\Delta$. Show the resulting partition function is invariant under the change.
\item Attach a $\Gamma \Delta$ $R$-vertex $B_3$ to the bottom two rows. Push $B_3$ through via the YBE. Use a third fish relation (Lemma~\ref{thm:Fish3}).
%
\end{enumerate}

Throughout the proof, the bottom two rows of configurations will often interchange positions, which amounts to the interchange $z_{r} \leftrightarrow z_{r}^{-1}$ in the appropriate Boltzmann weights. To keep track of such interchanges, at times we will use the following notation. For every configuration $I$, its partition function will often be denoted by $Z(I; a,b)$ if for each admissible state of $I$, the leftmost edges in rows $\overline{1}$,~\ldots,~$\overline{r-1}$ have charges congruent modulo $n$ to $c_1$,~\ldots,~$c_{r-1}$, respectively, while the leftmost edges in the bottom two rows have charges congruent to $a$ and $b$, with $b$ for the bottom row. To emphasize when $r$ and $\overline{r}$ have been interchanged in a symplectic-ice model, we will often write $\overline{Z} (I; a,b)$; if $a$ and $b$ are clear from context, we will write $Z(I)$ or $\overline{Z}(I)$.

\begin{proof}
Let $B_1$, $I_1$, $I_2$ be the configurations in Figure~\ref{fig:First_Attach}, where $I_2$ is obtained by attaching $B_1$ to $I_1$, and where the other rows of $I_1$ and $I_2$ match, alternate in ice type, and satisfy the following: for each $k$ with $k < r$, the leftmost edges in rows $\smash{(k,\overline{k} \, )}$ have fixed decorated spins $(+0, + c_k)$.

\begin{figure}[htb]
\centering
\makebox[\textwidth][c]{%
%
}
\caption{}
\label{fig:First_Attach}
\end{figure}

Configuration $B_1$ has only one admissible state, namely the one with the decorations on the right matching those on the left but in opposite order. Clearly, each admissible state of $I_2$ yields a unique state of $I_1$ for which the rightmost decorations on $B_1$ match the leftmost decorations on $I_1$. Similarly, each admissible state of $I_1$, for which the leftmost decorations in rows $(r, \overline{r})$ are $(c_r, 0)$, yields a unique state of $I_2$. It follows that
\begin{equation}
\label{E:glasses}
	Z(I_2) 
	= (z_{r}^{-n} - v^n z_{r}^n) 
	\, Z(I_1; 0, c_r).
\end{equation}

We apply the YBE repeatedly to $I_2$ to push the vertex in $B_1$ to the right. Doing so interchanges rows $r$ and $\overline{r}$ of $I_2$, does not affect the partition function, and yields the configuration $I_3$ in Figure~\ref{fig:First_YBE}; so $Z(I_2) = Z(I_3)$.

\begin{figure}[htb]
\makebox[\textwidth][c]{%

	\biggr)
	=
	z_{r},
\end{equation}
for every nonnegative integer $c$. 
\end{assumption}

\begin{lemma}[Fish relation, type $\boldsymbol{\Delta \Gamma}$]
\label{thm:Fish1}
Suppose $\varepsilon_1$ and $\varepsilon_2$ in Figure~\ref{fig:Canada} are fixed. The ratio $Z (I_5) / \wt(I_6)$ is independent of these decorated spins and equals $z_{r}^{-n} - v_{\vphantom{1}}^{n} z_{r}^{n}$.
\end{lemma}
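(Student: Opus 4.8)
The plan is to establish the equivalent product identity $Z(I_5) = (z_{r}^{-n} - v^{n} z_{r}^{n})\,\wt(I_6)$ for every choice of the external decorated spins $\varepsilon_1,\varepsilon_2$, by a direct finite case check; the statement about the ratio then follows (and the degenerate cases where $\wt(I_6)=0$ are handled by checking that $Z(I_5)=0$ as well). The geometry is the key simplification: since $I_5$ consists only of the $\Delta\Gamma$ $R$-vertex capped on the right by a flipped $\Delta\Gamma$-bend, an admissible state of $I_5$ is completely determined by the two internal decorated spins on the horizontal edges joining the crossing to the bend. Hence $Z(I_5)$ is a sum over those internal edges of the product of the $R$-vertex weight (from the $\Delta\Gamma$ block of Table~\ref{tab:braid_weights}) with the capping bend weight, and the lemma becomes a short computation parallel to the caduceus relation (Lemma~\ref{thm:caduceus}).

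First I would cut the enumeration down using two local rules. A bend vertex is admissible only when the two strands it joins carry opposite spins, so for each fixed $(\varepsilon_1,\varepsilon_2)$ only one or two internal spin-patterns survive; in particular, when $\varepsilon_1$ and $\varepsilon_2$ have the \emph{same} spin the cap forces $\wt(I_6)=0$, and one checks that the spin flow through the $R$-vertex then leaves $I_5$ with no admissible state, so both sides vanish. The other constraint is that the decorations are not free: by the charge/decoration dictionary of Section~\ref{sec:Local_Charge}, the decoration on each internal edge is pinned down modulo $n$ once the spins are fixed, so one must track how the decoration carried along the threading strand is transported through the crossing to the bend. For the two spin-opposite choices of $(\varepsilon_1,\varepsilon_2)$ this leaves either a single admissible state of $I_5$ or a two-term sum, to be compared against the single bend weight furnished by~\eqref{E:nebraska} for $I_6$.

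In the two-term cases the two products of (crossing weight)$\times$(bend weight) contribute, up to the common factor $\wt(I_6)$, terms of the form $z_{r}^{-n}$ and $-v^{n} z_{r}^{n}$, and I would show they assemble to $z_{r}^{-n} - v^{n} z_{r}^{n}$ by invoking the defining properties of $g$, namely $g(0)=-v$ and $g(a)\,g(n-a)=v$, together with the oddness of $n$ — exactly the Gauss-sum arithmetic underlying Proposition~\ref{P:n_admissible} and the weight table. The substantive point is that the resulting scalar is the \emph{same} in every surviving case and is independent of the decorations on $\varepsilon_1,\varepsilon_2$, which is what allows the sum over $\varepsilon_1,\varepsilon_2$ to be factored out in the proof of Theorem~\ref{thm:inverse}.

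I expect the main obstacle to be the decoration bookkeeping rather than the algebra. One must apply the charge convention for the flipped $\Delta\Gamma$-bend (with the $\overline r$-row sitting on top) consistently with the Assumption weights~\eqref{E:nebraska} and with the $R$-vertex decorations in Table~\ref{tab:braid_weights}, since a single off-by-one error in a decoration silently sends a nonzero Boltzmann weight to zero and would break the identity. Once the decorations are correctly aligned across the crossing and the cap, the remaining verification reduces to the same short Gauss-sum collapse in each case.
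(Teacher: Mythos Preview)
Your approach is essentially the paper's own: a direct finite case check over the two spin-opposite choices of $(\varepsilon_1,\varepsilon_2)$, enumerating the (at most two) admissible internal states of $I_5$ in each subcase and comparing the product of $\Delta\Gamma$ $R$-weight and cap weight against the single bend weight $\wt(I_6)$ from~\eqref{E:nebraska}. The only small over-anticipation is that the identity $g(a)g(n-a)=v$ turns out not to be needed here; the decoration bookkeeping you flag is indeed the main work, and the one extra subcase in which the decorations on $(\varepsilon_1,\varepsilon_2)=(+,-)$ force both $Z(I_5)$ and $\wt(I_6)$ to vanish is precisely the degenerate situation you already allow for.
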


\noindent See Appendix~\ref{app:appendixCF} for the proof.

Let $I_7$ be the configuration obtained by attaching $I_6$ to $I_4$. Then  
\[
	Z(I_3)
	= \sum_{\varepsilon_k} Z (I_4) 
	\, Z (I_5)
	= (z_{r}^{-n} - v^n z_{r}^n)
	\sum_{\varepsilon_k}^{\phantom{x}} 
	Z (I_4) \, \wt (I_6) 
	= (z_{r}^{-n} - v^n z_{r}^n) \, 
	Z (I_7; c_r, 0),
\]
where each sum ranges over all $\varepsilon_1$ and $\varepsilon_2$. It follows from this and~\eqref{E:glasses} that 
\begin{equation}
\label{E:Lagrange}
	Z(I_1; 0, c_r) = 
	Z (I_7; c_r, 0).
\end{equation}

Since all spins along the bottom boundary are $+$, the only admissible configurations in row $r$ are of types $a_1$, $b_2$, and $c_2$; moreover, the kinds of functions appearing as weights in $a_1$, $b_2$, and $c_2$ of $\Delta$ and $\Gamma$ ice are the same. Thus, it seems reasonable that we can change row $r$ from $\Delta$ ice to $\Gamma$ ice without affecting $Z(I_7)$.


Consider an admissible state $\mathfrak{s}$ of $I_7$. Row $r$ (resp., $\overline{r}$) has at most one (resp., exactly one) vertex with a vertical spin of $-$. There are three possibilities:
\begin{itemize}
\item Case 1: row $r$ has no vertex with a vertical spin of $-$, but row $\overline{r}$ has a unique vertex $\overline{v}$ with a vertical spin of $-$.
\item Case 2: row $r$ has a unique vertex $v$ with a vertical spin of $-$, row $\overline{r}$ has a unique vertex $\overline{v}$ with a vertical spin of $-$, and $v$ and $\overline{v}$ are in different columns. 
\item Case 3: row $r$ has a unique vertex $v$ with a vertical spin of $-$, row $\overline{r}$ has a unique vertex $\overline{v}$ with a vertical spin of $-$, and $v$ and $\overline{v}$ are in the same column. 
\end{itemize}
We analyze each of these cases below to see the effects of changing row $r$ from $\Delta$ ice to $\Gamma$ ice. After the change in ice type, we assign the vertex in the bend a charge of $0$ and let charge propagate along rows $(\overline{r}, r)$ from the bend. We denote by $c$ and $\overline{c}$ the column numbers of vertices $v$ and $\overline{v}$, respectively. In Figures~\ref{fig:Case1_change}--\ref{fig:Case3_change} below, we will indicate by $\square$ the vertices $v$ and $\overline{v}$. We denote by $N$ the number
\[
	N = \lambda_1 + r + 1,
\]
which is one more than the number of columns.

\textbf{Case 1:} The spins in rows $(\overline{r}, r)$ of $\mathfrak{s}$ are shown on the left side of Figure~\ref{fig:Case1_change}. Here $c_r$ satisfies $c_r \equiv N - \overline{c} \pod{n}$. Before changing row $r$, $\wt(\overline{v}) = 1$.

\begin{figure}[htb]
\makebox[\textwidth][c]{%
%
}
\caption{Case 1 for changing row $r$ from $\Delta$ ice (left) to $\Gamma$ ice (right).}
\label{fig:Case1_change}
\end{figure}

After changing row $r$, we assign the vertex in the bend a charge of $0$ (done already) and let charge propagate along rows $(\overline{r}, r)$ from the bend, as shown on the right side of Figure~\ref{fig:Case1_change}. The leftmost charges in rows $\overline{r}$ and $r$ are $N-\overline{c}$ and $N$, respectively. Assuming the weight of the bend has not changed, $\wt (\mathfrak{s})$ remains the same since only $a_1$-vertices appear in row $r$, each of which has a weight independent of charge.

\textbf{Case 2:} The spins in rows $(\overline{r}, r)$ of $\mathfrak{s}$ are shown on the left side of Figure~\ref{fig:Case2_change}. Here $c_r$ satisfies $c_r \equiv N - \overline{c} + 2c \pod{n}$. Both $v$ and $\overline{v}$ are $c_2$-vertices. Before changing row $r$, $\wt(v) = 1$ and $\wt (\overline{v}) = \delta (2c)$. For $\delta(2c)$ to be nonzero, we must have $2c \equiv 0$, hence $c \equiv 0$, as $n$ is odd.

\begin{figure}[htb]
\makebox[\textwidth][c]{%
%
}
\caption{Case 2 for changing row $r$ from $\Delta$ ice (left) to $\Gamma$ ice (right).}
\label{fig:Case2_change}
\end{figure}

After changing row $r$, we assign the vertex in the bend a charge of $0$ and let charge propagate along rows $(\overline{r}, r)$ from the bend, as shown on the right side of Figure~\ref{fig:Case2_change}. The leftmost charges in rows $\overline{r}$ and $r$ are $N - \overline{c} + c$ and $N-c$, respectively. Assuming the weight of the bend has not changed, $\wt(\mathfrak{s})$ remains the same.

\textbf{Case 3:} The spins in rows $(\overline{r}, r)$ of $\mathfrak{s}$ are shown on the left side of Figure~\ref{fig:Case3_change}. Here $c_r$ satisfies $c_r \equiv N + c \pod{n}$. Before changing row $r$, $\wt(v) = 1$ and $\wt(\overline{v}) = g(2 c)$.

\begin{figure}[htb]
\makebox[\textwidth][c]{%
%
}
\caption{Case 3 for changing row $r$ from $\Delta$ ice (left) to $\Gamma$ ice (right).}
\label{fig:Case3_change}
\end{figure}

After changing row $r$, we assign the vertex in the bend a charge of $0$ and let charge propagate along rows $(\overline{r}, r)$ from the bend, as shown on the right side of Figure~\ref{fig:Case3_change}. The leftmost charges in rows $\overline{r}$ and $r$ are $N$ and $N-c$, respectively. However, $\wt(\overline{v})$ is now $g(c)$ rather than $g(2 c)$ as before. Assume the weight of the bend has not changed. Since $\wt(\overline{v})$ has changed, we need to make one alteration: \textit{we replace every occurrence of $g(a)$ in rows $r$ and\/ $\overline{r}$ by $g(2a)$, for any $a \in \mathbb{Z}$.} Doing so affects neither the results in Cases 1 and 2 nor the YBE, since the identities defining $g$ will remain true. Then $\wt(\overline{v})$ before the change matches $\wt (\overline{v})$ after. One can verify that $\wt (\mathfrak{s})$ remains the same.

Thus, we can change the ice type of row $r$ from $\Delta$ to $\Gamma$ without changing $Z (I_7)$ if we do the alteration to the function $g$ in rows $r$ and $\overline{r}$ as described in Case 3. To emphasize this alteration, we will write $\wt^{\ast}$ and $Z^{\ast}$.

To summarize, we make the following assumption and state a lemma.

\begin{assumption}
The Boltzmann weights of the flipped\/ $\Gamma \Gamma$-bends connecting rows $(\overline{r}, r)$ are the same as those of the flipped\/ $\Gamma \Delta$-bends in~\eqref{E:nebraska}:
\[
	\wt_{\Gamma \Gamma}
	\biggl(\!

	\biggr)
	=
	z_{r}^{-1}.
\]
\end{assumption}


\begin{lemma}
\label{thm:sets_ice}
Continue using the notation above. There exists a bijection between the set of admissible states of $I_7$ and the set of admissible states of the second configuration shown in~\eqref{E:bijection_summary} below, where: the configurations differ only in rows $\overline{r}$ and $r$; the Boltzmann weights in rows $(\overline{r}, r)$ of the second configuration have been altered as described above; for each state of the second configuration, if $a$ and $b$ are the leftmost charges in rows $\overline{r}$ and $r$, respectively, then $a - b \equiv c_r - N \pod{n}$. All other rows in the configurations alternate in ice type as usual.
\begin{equation}
\label{E:bijection_summary}
\mathord{%
%
}
\end{equation}
\end{lemma}

\begin{proof}[Proof of Lemma~\ref{thm:sets_ice}]
Take a state of $I_7$. Change its bottom row from $\Delta$ ice to $\Gamma$ ice, assign the vertex in the bend connecting rows $(\overline{r}, r)$ a charge of $0$, and let charge propagate in these rows from that vertex. Let $a$ and $b$ be the leftmost charges in rows $\overline{r}$ and $r$, respectively. Cases 1--3 above describe these charges.
\begin{itemize}
\item In Case 1, we have $a = N -\overline{c}$ and $b = N$. So $a - b = - \overline{c} \equiv c_r - N \pod{n}$. 
\item In Case 2, we have $a = N -\overline{c} + c$ and $b = N - c$. So $a - b = 2c - \overline{c} \equiv c_r - N \pod{n}$. 
\item In Case 3, we have $a = N$ and $b = N - c$. So $a - b = c \equiv c_r - N \pod{n}$. 
\end{itemize}
The result is a state of the second configuration.

Take a state of the second configuration. Let $a$ and $b$ be the leftmost charges in rows $\overline{r}$ and $r$, respectively. Then $a$ and $b$ must match those leftmost charges shown on the right side of either Figure~\ref{fig:Case1_change},~\ref{fig:Case2_change}, or~\ref{fig:Case3_change}. One can verify that $a - b \equiv c_r - N \pod{n}$. We obtain a state of $I_7$ by reversing the steps above.
\end{proof}

Let $B_2$, $I_8$, $I_9$ be the configurations in Figure~\ref{fig:Second_Attach}, where: $I_9$ is obtained by attaching $B_2$ to $I_8$, $a$ and $b$ are some fixed decorations in $[0, n-1]$ satisfying $a - b \equiv c_r - N \pod{n}$, and the other rows of $I_8$ and $I_9$ match those of $I_1$.

\begin{figure}[htb]
\centering
\makebox[\textwidth][c]{%
%
}
\caption{}
\label{fig:Second_Attach}
\end{figure}

If $a \neq b$, then the only admissible states of $B_2$ are the first two in Figure~\ref{fig:states_fishB2}. If $a = b$, then the only admissible state of $B_2$ is the last one in Figure~\ref{fig:states_fishB2}. Each admissible state of $I_9$ yields unique states of $B_2$ and $I_8$ for which the rightmost decorations on $B_2$ match the leftmost decorations on $I_8$. Similarly, each admissible state of $I_8$, with leftmost decorations in rows $(r, \overline{r})$ matching the rightmost decorations in an admissible state of $B_2$, yields a unique state of $I_9$. It follows from this, Lemma~\ref{thm:sets_ice}, and~\eqref{E:Lagrange} that
\begin{align}
	Z^{\ast} (I_9) &= 
	\begin{cases}
	\wt_{\Gamma \Gamma}^{\ast}
	\biggl(\!\!
	\mathord{%
%
}
\caption{}
\label{fig:states_fishB2}
\end{figure}

We apply the YBE repeatedly to $I_9$ to push the vertex in $B_2$ to the right. Doing so interchanges rows $r$ and $\overline{r}$ of $I_9$, does not affect the partition function, and yields a configuration $I_{10}$; so $Z^{\ast} (I_9) = Z^{\ast} (I_{10})$. To relate the partition function of twisted ice $Z^{\ast} (I_{10})$ to a symplectic-ice state, we use the configurations $I_{11}$, $I_{12}$, $I_{13}$ in Figure~\ref{fig:Lewisburg}, where $I_{11}$ and $I_{12}$ together are $I_{10}$. 

\begin{figure}[htb]
\centering
\makebox[\textwidth][c]{%
%
}
\caption{}
\label{fig:Lewisburg}
\end{figure}

\begin{lemma}[Fish relation, type $\boldsymbol{\Gamma \Gamma}$]
\label{thm:Fish2}
Suppose $\varepsilon_1$ and $\varepsilon_2$ in Figure~\ref{fig:Lewisburg} are fixed. The ratio $Z^{\ast} (I_{12}) / \wt^{\ast} (I_{13})$ is independent of these decorated spins and equals $z_{r}^{-n} - v z_{r}^{n}$.
\end{lemma}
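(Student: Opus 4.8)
The plan is to prove Lemma~\ref{thm:Fish2} by exactly the finite case analysis that establishes the type $\Delta\Gamma$ fish relation (Lemma~\ref{thm:Fish1}), the only changes being that the $R$-vertex is now of ice type $\Gamma\Gamma$ (its weights read off from the $\Gamma\Gamma$ block of Table~\ref{tab:braid_weights}), the bend is the flipped $\Gamma\Gamma$-bend of the preceding Assumption, and the replacement $g(a)\mapsto g(2a)$ along rows $r,\overline{r}$ is carried throughout, as recorded by the starred quantities $Z^{\ast}$ and $\wt^{\ast}$. Concretely, I would prove the sharper identity $Z^{\ast}(I_{12}) = (z_{r}^{-n} - v z_{r}^{n})\,\wt^{\ast}(I_{13})$ for \emph{every} fixed pair of decorated spins $(\varepsilon_1,\varepsilon_2)$, from which both the stated value of the ratio and its independence from $(\varepsilon_1,\varepsilon_2)$ follow at once (including the degenerate $0=0$ cases). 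The first step is to reduce to opposite-spin boundary data: conservation of the number of $-$ spins across the $\Gamma\Gamma$ $R$-vertex forces the two interior edges feeding the bend to carry the same $-$ count as $\{\varepsilon_1,\varepsilon_2\}$, so whenever $\varepsilon_1,\varepsilon_2$ share a spin the two bend edges share a spin and $\wt^{\ast}(I_{13})=0=Z^{\ast}(I_{12})$. This leaves only the two spin patterns $(+,-)$ and $(-,+)$, which are precisely the admissible configurations of the lone bend $I_{13}$.

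For each of these two spin patterns I would expand $Z^{\ast}(I_{12})$ as the sum, over the decorated spins on the two interior edges joining the $R$-vertex to the bend, of the product of the altered $\Gamma\Gamma$ $R$-weight and the flipped $\Gamma\Gamma$-bend weight. Because the bend weight is nonzero only when its two edges carry opposite spins with the charge on the $+$ edge one greater than that on the $-$ edge, the sum collapses onto the handful of interior states satisfying both the $R$-vertex admissibility (the matching $\Gamma\Gamma$ entries of Table~\ref{tab:braid_weights}, with $z_r$ and $z_{r}^{-1}$ assigned to the two rows as dictated by Figure~\ref{fig:Lewisburg}) and the bend's charge constraint. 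Substituting the explicit $R$-weights $z_{1}^{n}-z_{2}^{n}$, $(1-v)z_{1}^{a}z_{2}^{n-a}$, $g(a-b)(z_{1}^n-z_{2}^n)$, etc., together with the bend weights $z_{r}^{\pm 1}$ from the Assumption, I would then check case by case that the total equals $(z_{r}^{-n}-v z_{r}^{n})$ times the relevant bend weight $\wt^{\ast}(I_{13})$, the powers of $z_r$ attached to the incoming decorations reorganizing consistently so that the final scalar is charge-free.

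The main obstacle will be the subcase in which the interior spin pattern leaves the interior charge unconstrained, so that the defining sum for $Z^{\ast}(I_{12})$ runs over a full residue class of interior decorations and yields a sum of Gauss-sum weights of the form $\sum g(\,\cdot\,)$. Here I would collapse the sum using the relations $g(0)=-v$, $g(a)g(n-a)=v$, and the periodicity of $g$ modulo $n$; the oddness of $n$ is essential, since it guarantees that $a\mapsto 2a$ is a bijection of $\mathbb{Z}/n\mathbb{Z}$, so the altered weights $g(2a)$ obey the same identities and the collapse survives the $g\mapsto g(2\,\cdot\,)$ replacement verbatim. Once each case is verified against $\wt^{\ast}(I_{13})$, the common value $z_{r}^{-n}-v z_{r}^{n}$ emerges and the lemma follows; as with Lemma~\ref{thm:Fish1}, the detailed bookkeeping of charges and weights would be deferred to Appendix~\ref{app:appendixCF}.
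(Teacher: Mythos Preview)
Your approach is correct and is essentially the same direct case analysis the paper carries out: it reduces to the two opposite-spin boundary patterns $(\varepsilon_1,\varepsilon_2)=(+,-)$ and $(-,+)$, lists the (two) admissible states of $I_{12}$ in each, and checks that both sums equal $(z_r^{-n}-vz_r^n)\,\wt^{\ast}(I_{13})$. Your anticipated ``main obstacle'' never materializes---the bend forces the interior decorations to be $0$ and $1$, so only the $b_1,b_2,c_1,c_2$ weights of $\Gamma\Gamma$ ice enter, no $g$-terms appear, and there is no sum over a residue class to collapse.
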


\noindent See Appendix~\ref{app:appendixCF} for the proof.

Let $I_{14}$ be the configuration obtained by attaching $I_{13}$ to $I_{11}$. Then 
\[
	Z^{\ast} (I_{10})
	= \sum_{\varepsilon_k} Z^{\ast} (I_{11}) 
	\, Z^{\ast} (I_{12})
	= (z_{r}^{-n} - v z_{r}^{n})
	\sum_{\varepsilon_k}^{\phantom{x}} 
	Z^{\ast} (I_{11}) \, \wt^{\ast} (I_{13}) 
	= (z_{r}^{-n} - v z_{r}^{n}) \, 
	Z^{\ast} (I_{14}; a, b),
\]
so that
\begin{equation}
\label{E:Newton}
	Z^{\ast} (I_{9}) 
	= (z_{r}^{-n} - v z_{r}^{n}) \, 
	Z^{\ast} (I_{14}; a, b).
\end{equation}

We now change the bottom row in $I_{14}$ from $\Gamma$ ice to $\Delta$ ice. We also revert to our original Boltzmann weights. This change in ice type, which changes the leftmost (fixed) decorations in the bottom row back to $0$ for each admissible state of $I_{14}$, yields the configuration $I_{15}$ given in Figure~\ref{fig:Third_Attach}, all of whose admissible states have leftmost charges in rows $\overline{r}$ and $r$ congruent modulo $n$ to $a$ and $b$, respectively; so $Z^{\ast}(I_{14}; a, b) = Z(I_{15}; c_r, 0)$. This change will not affect $Z^{\ast} (I_{14})$ if we assume the weights of the bend connecting rows $(r, \overline{r})$ remain the same.

\begin{assumption}
The Boltzmann weights of the $\Gamma \Delta$-bends connecting rows $(\overline{r}, r)$ are
\begin{equation}
\label{E:Bundaberg}
	\wt_{\Gamma \Delta}
	\biggl(\!
	\begin{tikzpicture}[scale=0.5, every node/.style={scale=1.0}, baseline=2.25]
	\draw[black, semithick] 
	(-0.25,0) -- (0.25,0) arc (-90:90:{3*sqrt(2)/10}) 
	-- (-0.25,{3*sqrt(2)/5});
	\tikzstyle{spin}=[draw, circle] 
	\path[black, very thin] 
	(0,{0*3*sqrt(2)/5}) node[inner sep=-0.5pt, 
		fill=white, spin] 
	{\tiny$\boldsymbol{+}$}
	node[left] {\scriptsize$c \, \;$};
	\node[left] at (-1,{1*3*sqrt(2)/5}) {\footnotesize$r$};
	\node[left] at (-1,{0*3*sqrt(2)/5 - 0.12}) {\footnotesize$\smash{\overline{r}}$};
	\tikzstyle{spin}=[draw, circle] 
	\path[black, very thin] 
	(0,{1*3*sqrt(2)/5}) node[inner sep=-0.5pt, 
		fill=white, spin] 
		{\tiny$\boldsymbol{- \vphantom{+}}$}
	node[left] {\scriptsize$c \, \;$};
	\filldraw[black] ({0.25+3*sqrt(2)/10},{3*sqrt(2)/10}) circle(2.25pt);
	\end{tikzpicture}
	\biggr)
	=
	z_{r},
	\qquad
	\wt_{\Gamma \Delta} 
	\biggl(\!
	\begin{tikzpicture}[scale=0.5, every node/.style={scale=1.0}, baseline=2.65]
	\draw[black, semithick] 
	(-0.25,0) -- (0.25,0) arc (-90:90:{3*sqrt(2)/10}) 
	-- (-0.25,{3*sqrt(2)/5});
	\tikzstyle{spin}=[draw, circle] 
	\path[black, very thin] 
	(0,{0*3*sqrt(2)/5}) node[inner sep=-0.5pt, 
		fill=white, spin] 
	{\tiny$\boldsymbol{- \vphantom{+}}$}
	node[left] {\scriptsize$c \, \;$};
	\tikzstyle{spin}=[draw, circle] 
	\path[black, very thin] 
	(0,{1*3*sqrt(2)/5}) node[inner sep=-0.5pt, 
		fill=white, spin] 
		{\tiny$\boldsymbol{+}$}
	node[left] {\scriptsize$c{+}1 \, \;$};
	\node[left] at (-1-0.7,{1*3*sqrt(2)/5}) 
	{\footnotesize$r$};
	\node[left] at (-1-0.7,{0*3*sqrt(2)/5 - 0.12}) 
	{\footnotesize$\smash{\overline{r}}$};
	\filldraw[black] ({0.25+3*sqrt(2)/10},{3*sqrt(2)/10}) circle(2.25pt);
	\end{tikzpicture}
	\biggr)
	=
	z_{r}^{-1},
\end{equation}
for every nonnegative integer $c$.
\end{assumption}

We can change the bottom row in $I_8$ from $\Gamma$ ice to $\Delta$ ice also, where each admissible state of $I_8$ before the change has leftmost charges congruent modulo $n$ to $b$ and $a$, with $a$ (not $b$) for the bottom row. This allows us to get rid of the factors $\overline{Z}^{\ast} (I_8; b, a)$ and $\overline{Z}^{\ast} (I_8; a, a)$ in~\eqref{E:messy}. Then $\overline{Z}^{\ast} (I_8; b, a) = Z(I_1; 0, c_r - 2e)$, whether $a = b$ or $a \neq b$.

Let $B_3$ and $I_{16}$ be the other configurations in Figure~\ref{fig:Third_Attach}, where $I_{16}$ is obtained by attaching $B_3$ to $I_{15}$, and where the other rows of $I_{15}$ and $I_{16}$ match those of $I_1$.

\begin{figure}[htb]
\centering
\makebox[\textwidth][c]{%
%
}
\caption{}
\label{fig:Third_Attach}
\end{figure}

Configuration $B_3$ has only one admissible state. Clearly, each admissible state of $I_{16}$ yields a unique admissible state of $I_{15}$ for which the rightmost decorations on $B_3$ match the leftmost decorations on $I_{15}$. Similarly, each admissible state of $I_{15}$, for which the leftmost decorations in rows $(\overline{r}, r)$ are $(c_r, 0)$, yields a unique admissible state of $I_{16}$. It follows that 
\begin{equation}
\label{E:Evans}
	Z(I_{16}) 
	= (z_{r}^{n} - v z_{r}^{-n}) 
	\, Z(I_{15}; c_r, 0).
\end{equation}

We apply the YBE repeatedly to $I_{16}$ to push the vertex in $B_3$ to the right. Doing so interchanges rows $r$ and $\overline{r}$ of $I_{16}$, does not affect the partition function, and yields a configuration $I_{17}$; so $Z(I_{16}) = Z (I_{17})$. To find $Z (I_{17})$, we use the configurations $I_{18}$, $I_{19}$, $I_{20}$ in Figure~\ref{fig:Erebor}, where $I_{18}$ and $I_{19}$ together are $I_{17}$.

\begin{figure}[htb]
\centering
\makebox[\textwidth][c]{%
%
}
\caption{}
\label{fig:Erebor}
\end{figure}

\begin{lemma}[Fish relation, type $\boldsymbol{\Gamma \Delta}$]
\label{thm:Fish3}
Suppose $\varepsilon_1$ and $\varepsilon_2$ in Figure~\ref{fig:Erebor} are fixed. The ratio $Z(I_{19}) / \wt(I_{20})$ is independent of these decorated spins and equals $z_{r}^{n} - v z_{r}^{-n}$.
\end{lemma}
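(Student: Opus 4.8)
The plan is to prove the identity by the same direct enumeration that establishes the first two fish relations (Lemmas~\ref{thm:Fish1} and~\ref{thm:Fish2}), now with the bottom row of ice type $\Delta$ and the $\Gamma\Delta$-bend weights of~\eqref{E:Bundaberg}. Both $I_{19}$ and $I_{20}$ have only the two free decorated spins $\varepsilon_1,\varepsilon_2$ on the left, so for each fixed admissible choice of $\varepsilon_1,\varepsilon_2$ each configuration has finitely many admissible states; I would compute $Z(I_{19})$ and $\wt(I_{20})$ in closed form and check that their ratio equals $z_{r}^{n} - v z_{r}^{-n}$ in every case, which is exactly the assertion that the ratio is independent of $\varepsilon_1,\varepsilon_2$.

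First I would cut down the number of cases. The cap (bend) in $I_{19}$ is admissible only when the two spins meeting it differ, and the $\Gamma\Delta$ $R$-vertex conserves the number of $-$ spins across the crossing; together these force the underlying signs of $(\varepsilon_1,\varepsilon_2)$ to contain exactly one $-$, so only the patterns $(+,-)$ and $(-,+)$ survive, and I would treat them one at a time. Within each sign pattern there is a single internal edge joining the $R$-vertex to the bend, whose decorated spin is summed over the residues $0,1,\dots,n-1$. I would read the crossing weights off the $\Gamma\Delta$ block of Table~\ref{tab:braid_weights}, the row weights off Table~\ref{tab:Delta_Gamma}, and the bend weights off~\eqref{E:Bundaberg} (tracking carefully which orientation of the bend applies after the YBE push, since that choice decides between a factor of $z_r$ and one of $z_{r}^{-1}$), assemble $Z(I_{19})$ as the resulting sum of products, and collapse the internal sum using $g(0)=-v$, the relation $g(a)g(n-a)=v$ for $n\nmid a$, and $h(a)=(1-v)\delta(a)$. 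Since $\wt(I_{20})$ is just the weight of the single $\Gamma\Delta$-bend, read directly off the same rules, the comparison reduces to a finite algebraic check. The emergence of both $z_r$ and $z_{r}^{-1}$ in the target scalar, rather than a single power, will come entirely from the spectral dependence of the bend weights in~\eqref{E:Bundaberg}; as a sanity check I expect the value to agree with the $\Gamma\Gamma$ fish value of Lemma~\ref{thm:Fish2} after the interchange $z_r\leftrightarrow z_{r}^{-1}$, which also suggests an alternative route deducing this lemma from Lemma~\ref{thm:Fish2} by a reflection of the bottom two rows.

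The hard part will be the bookkeeping of charges propagating through the crossing and accumulating at the bend, where the doubled argument $g(2c)$ (equivalently the altered $g$ introduced in Case~3 preceding Lemma~\ref{thm:sets_ice}) enters. The internal sum involves $g$ evaluated at such a doubled argument, and it is precisely the oddness of $n$ that makes $c\mapsto 2c$ a bijection on $\mathbb{Z}/n\mathbb{Z}$, so that $g(a)g(n-a)=v$ still applies term by term and the sum telescopes to the stated scalar. Confirming that the $\delta$- and $h$-supported contributions align identically across the two sign patterns — so that the final ratio is genuinely constant in $\varepsilon_1,\varepsilon_2$ — is the delicate point; once that is in hand, the remainder is routine multiplication of weights.
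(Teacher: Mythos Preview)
Your approach---direct enumeration over the two admissible sign patterns $(\varepsilon_1,\varepsilon_2)\in\{(+,-),(-,+)\}$---is exactly what the paper does, and it succeeds. However, your description of the computation is considerably more complicated than what actually happens, and contains a couple of misconceptions worth correcting.

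First, there is no sum over the residues $0,1,\dots,n-1$. The $\Gamma\Delta$ $R$-vertex together with the bend constraints force the two internal decorated spins essentially uniquely, so each sign pattern contributes exactly two admissible states (not $n$). The paper simply writes out the four weights and simplifies; no telescoping is needed.

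Second, the ``altered $g$'' and the doubled argument $g(2c)$ are \emph{not} in play here. Immediately before introducing $I_{15}$ the paper states ``We also revert to our original Boltzmann weights,'' so the $\wt^\ast$ modification from the $\Gamma\Gamma$ argument (Lemma~\ref{thm:Fish2}) has been dropped. The only appearance of $g$ in the entire computation is in $\wt(I_{20})$ for the $(-,+)$ case: $I_{20}$ is a flipped $\Delta\Gamma$-bend (not a bend from~\eqref{E:Bundaberg}), whose weight for top spin $-$ and bottom spin $+$ is $g(2c)z_r^{-1}$ with $c=0$, so this is just $g(0)z_r^{-1}=-vz_r^{-1}$. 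The oddness of $n$ and the bijection $c\mapsto 2c$ play no role.

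So your plan would work once executed, but the ``hard part'' you anticipate is an artifact of conflating this lemma with the $\Gamma\Gamma$ fish relation. The actual verification is: compute $Z(I_{19})$ as a sum of two terms in each sign case, divide by the single bend weight $\wt(I_{20})$ (which is $z_r$ in one case and $-vz_r^{-1}$ in the other), and observe that both ratios equal $z_r^n - vz_r^{-n}$.
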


\noindent See Appendix~\ref{app:appendixCF} for the proof.

We then have 
\[
	Z (I_{17})
	= \sum_{\varepsilon_k} 
	Z (I_{18}) \, Z(I_{19})
	= (z_{r}^{n} - v z_{r}^{-n})
	\sum_{\varepsilon_k}^{\phantom{x}} 
	Z (I_{18}) \, \wt (I_{20}) 
	= (z_{r}^{n} - v z_{r}^{-n}) \, \overline{Z} (I_{1}; 0, c_r).
\]
It follows from this and~\eqref{E:Evans} that $Z (I_{15}; c_r, 0) = \overline{Z} (I_{1}; 0, c_r)$. Then~\eqref{E:Newton} yields
\[
	Z^{\ast} (I_9) 
	= (z_{r}^{-n} - v z_{r}^{n})
	\, \overline{Z} (I_{1}; 0, c_r).
\]
Substituting $(z_{r}^{-n} - v z_{r}^{n}) \, \overline{Z} (I_{1}; 0, c_r)$ for $Z^{\ast} (I_9)$ in~\eqref{E:messy} proves the theorem.
\end{proof}

\section{Algebraic Preliminaries}
\label{sec:Algebraic_Preliminaries}

In Section~\ref{sec:Connections_intertwining_Whittaker}, we will relate the partition function $Z$ of our model to Whittaker functions on metaplectic covers of $\SO (2r+1)$. We will make some conjectures that relate certain structure constants arising from intertwining operators for metaplectic principal series representations to modified Boltzmann weights of the $R$-vertices shown in Table~\ref{tab:Delta_Gamma}. In this section, we introduce some terminology of the algebraic preliminaries that are needed to state these conjectures. All of the results in this section summarize some of the work found in, e.g., McNamara~\cite{McNamara, McNamara2, McNamara3}.

As mentioned in Section~\ref{sec:Partition_Boltzmann_Charge}, $n$ and $r$ are fixed positive integers, with $n$ odd.

Let $F$ be a nonarchimedean local field with valuation ring $\mathfrak{o}_{F}$, uniformizer $\varpi$ (i.e., $\varpi$ is a generator of the maximal ideal of $\mathfrak{o}_F$), and residue field $\frak{o}_{F} / \varpi \mathfrak{o}_{F}$ of cardinality $q$. Let $\mu_n$ be the cyclic group of all $n$th roots of unity in $F$. Let $(\cdot \, , \cdot) \colon F^{\times} \times F^{\times} \to \mu_n$ be the $n$th-power Hilbert symbol, and let $\epsilon \colon \mu_n \to \mathbb{C}^{\times}$ be an embedding; we will omit $\epsilon$ from all of our notation.

Let $G \vcentcolon = G(F)$ be a split reductive algebraic group over $F$ with maximal split torus $T \vcentcolon = T(F)$. Let $B = TU$ be the Borel subgroup of $G$, where $U$ is the unipotent radical of $B$. Let $K \vcentcolon = G(\mathfrak{o}_F)$ be the maximal compact subgroup of $G$.

Let $\Lambda$ be the coweight lattice of $G$. Then $\Lambda$ can be identified with the cocharacter group $X_{\ast}(T)$, which is isomorphic to $\mathbb{Z}^r$. Let $\{e_1,\ldots,e_r\}$ be the standard basis for $\mathbb{Z}^r$, where $e_i$ is the vector $(0,\ldots,0,1,0,\ldots,0)$ with $1$ in the $i$th coordinate. Let $\Phi$ be a reduced root system of type $C_r$, with subsets $\Phi^{+}$ and $\Phi^{-}$ of positive roots and negative roots. Let $W$ be the Weyl group of $\Phi$, where $W$ is generated by simple reflections $s_{\alpha}$ and has long element $w_0$.

Let $\smash{\widetilde{G}} \vcentcolon = \widetilde{G}^{(n)}$ be the metaplectic $n$-fold cover of $G$. In order to have simpler formulas in this section, we make the assumption that $q \equiv 1 \pod{2n}$, so that $F$ contains the cyclic group $\mu_{2n}$ of all $2n$th roots of unity. (For more on these simplifications, see Weissman~\cite{We}.) The group $\smash{\widetilde{G}}$ can be constructed as a central extension of $G$ by $\mu_n$; thus, there is a short exact sequence of topological groups:
\begin{equation}
\label{E:short}
	1
	\longrightarrow
	\mu_n
	\longrightarrow
	\widetilde{G}
	\overset{p}{\longrightarrow}
	G
	\longrightarrow
	1,
\end{equation}
where $\mu_n$ lies in the center of $\smash{ \widetilde{G} }$. As a set, $\smash{\widetilde{G}}$ equals $G \times \mu_n$, and $p$ in~\eqref{E:short} is the natural projection defined by $p(g, \zeta) = g$ for all $g \in G$ and all $\zeta \in \mu_n$. Multiplication in $\smash{ \widetilde{G} }$ depends on our choice of a cocycle $\sigma$ in $\HH^2 (G, \mu_n)$: for all $(g,\zeta)$,~$\smash{ (g',\zeta') \in \widetilde{G} }$,
\[
	(g,\zeta)(g',\zeta') = (gg', \sigma(g,g') \zeta \zeta'). 
\]
Thus, in order to describe $\smash{\widetilde{G}}$, we need to be able to describe $\sigma$. Matsumoto~\cite{Matsumoto} gave formulas that describe such a cocycle, but it remains difficult to evaluate explicitly on arbitrary elements. Fortunately, the central extension $\widetilde{G}$ in~\eqref{E:short} can be constructed by using a $W$-invariant symmetric bilinear form $B \colon \Lambda \times \Lambda \to \mathbb{Z}$ that we may choose to be given by the usual dot product. Let $Q \colon \Lambda \to \mathbb{Z}$ be the quadratic form given by $Q(\mu) \vcentcolon= B(\mu,\mu)/2$ for all $\mu \in \Lambda$, and let
\[
	\Lambda^{(n)} = \{ \mu \in \Lambda \mid \text{$B(\mu',\mu) \in n \mathbb{Z}$ for all $\mu' \in \Lambda$} \}.
\]
The quotient $\Lambda/\Lambda^{(n)}$ is isomorphic to $(\mathbb{Z}/n \mathbb{Z})^r$. For every $\alpha^{\vee} \in \Phi^{\vee}$, let 
\begin{equation}
\label{E:number_na}
	n_{\alpha} = \dfrac{n}{\gcd(n, Q(\alpha^{\vee}) )} \, . 
\end{equation}
Our choice of $B$ is so that $Q(\mu) = 1$ if $\mu$ is a short simple root, and $Q(\mu) = 2$ if $\mu$ is a long simple root. It then follows that $n_{\alpha} = n$, as $n$ is odd.

We review the construction of unramified principal series of $\smash{ \widetilde{G} }$. Let $\smash{ \widetilde{B} = p^{-1} (B)}$ and $\smash{ \widetilde{T} = p^{-1} (T) }$. Let $H$ be the centralizer of $\smash{ \widetilde{T} \cap K }$ in $\smash{ \widetilde{T} }$. Let $\chi$ be a \textbf{genuine} character on $H$, i.e., $\chi(\zeta h) = \zeta \chi(h)$ for all $\zeta \in \mu_n$ and all $h \in H$ (this definition applies to characters on all subgroups of $\widetilde{G}$, not just to those on $H$). Suppose $\chi$ is \textbf{unramified}, i.e., $\chi$ is trivial on $\smash{ \widetilde{T} \cap K}$.

Induce $\chi$ from $H$ up to $\smash{\widetilde{T}}$, obtaining $i(\chi) \vcentcolon = \Ind_{H}^{\widetilde{T}} (\chi)$, a vector space with $\dim i(\chi) = |\widetilde{T}/H|$, the cardinality of $\widetilde{T}/H$. Inflate $i(\chi)$ from $\smash{ \widetilde{T}}$ to $\smash{ \widetilde{B}}$, then induce from $\smash{ \widetilde{B}}$ up to $\smash{\widetilde{G}}$, obtaining $I(\chi) \vcentcolon = \Ind_{\widetilde{B}}^{\widetilde{G}} (i(\chi))$. We call $I(\chi)$ the \textbf{unramified principal series representation} of $\widetilde{G}$ induced by $\chi$ (when $\chi$ is unramified, which we are assuming it is, we use the adjective unramified for $I(\chi)$ also). Then $I(\chi)$, which has a $\widetilde{G}$-action given by right translation, is the vector space consisting of all the locally constant functions $f \colon \widetilde{G} \to i(\chi)$ that satisfy 
\begin{equation}
\label{E:loc_const_fun}
	f(bg) = (\delta^{1/2} \chi) (b) f(g)
\end{equation}
for all $b \in \widetilde{B}$ and all $g \in \widetilde{G}$, where $\delta$ is the modular quasicharacter of $\widetilde{B}$. (In~\eqref{E:loc_const_fun}, we are considering $\chi$ as a character on $\widetilde{B}$.) The subset $I(\chi)^K$ of $I(\chi)$ consisting of all $K$-fixed elements in $I(\chi)$ is a vector space with $\dim I(\chi)^K = 1$; the elements of $I(\chi)^K$ are called \textbf{spherical} vectors. We choose a nonzero spherical vector $\phi_K^{\chi}$ in $I(\chi)^K$.

For every $\alpha \in \Phi^{+}$, let $U_{\alpha}$ be the one-parameter unipotent subgroup of $G$ that corresponds to the embedding $\iota_{\alpha} \colon \SL (2) \to G$. For every $w \in W$, define $\prescript{w}{}{\chi}$ by $\prescript{w}{}{\chi}(t) = \chi(w^{-1} t w)$ for all $t \in \widetilde{T}$. Then for every $w \in W$, let
\[
	U_w = \prod_{\substack{
	\alpha \in \Phi^{+} \\ 
	w(\alpha) \in \Phi^{-}
	}}
	U_{\alpha},
\]
a unipotent subgroup, and define $\mathcal{A}_{w} \colon I(\chi) \to I(\prescript{w}{}{\chi})$, the \textbf{unnormalized intertwining operator}, by
\[
	\mathcal{A}_{w} (f)(g) = \int_{U_w} 
	f(w^{-1} u g) \, du,
\]
assuming the integral is absolutely convergent. We may extend this definition by meromorphic continuation. By this we mean in the $\mathbf{z} \vcentcolon = (z_1,\ldots,z_r) \in \mathbb{C}^r$ that parametrizes $\chi$, $\mathbf{z}$ varies as $\chi$ varies, and so $\mathcal{A}_w (f)(g)$ actually depends on $\mathbf{z}$.

Let $w \in W$. Since $\dim I(\chi)^K = 1$ and $\mathcal{A}_w \phi_{K}^{\chi}$ is $K$-invariant,
\begin{equation}
\label{E:scalar_inter}
	\mathcal{A}_w \phi_{K}^{\chi} = c_w (\chi) \phi_{K}^{\prescript{w}{}{\chi}}
\end{equation}
for some nonzero element $c_w (\chi)$ in the fraction field of the coordinate ring of a certain algebraic variety. We do not need to say anything on this, but we do mention the following: if $s_{\alpha}$ is a simple reflection, and if $w \in W$ satisfies $\ell(s_{\alpha} w) = \ell(w) + 1$, where $\ell$ is the length function on the Weyl group $W$, then  
\[
	c_{s_{\alpha}} (\chi) = \dfrac{1 - q^{-1} \mathbf{z}^{n_{\alpha}\alpha^{\vee}}}{1 - \mathbf{z}^{n_{\alpha}\alpha^{\vee}}} \, ,
	\qquad
	c_{s_{\alpha} w} (\chi) = c_{s_{\alpha}} (\prescript{w}{}{\chi}) c_{w} (\chi).
\]
Then the \textbf{normalized intertwining operator} $\overline{\mathcal{A}}_w$ is defined to be
\[
	\overline{\mathcal{A}}_w = 
	(c_{w} (\chi))^{-1} \mathcal{A}_w.
\]
It follows from~\eqref{E:scalar_inter} that $\overline{\mathcal{A}}_w \phi_{K}^{\chi} = \phi_{K}^{\prescript{w}{}{\chi}}$.

An advantage of working with $\overline{\mathcal{A}}_{w}$ rather than with $\mathcal{A}_w$ is that for all $w$,~$w' \in W$, we have $\overline{\mathcal{A}}_{ww'} = \overline{\mathcal{A}}_w \overline{\mathcal{A}}_{w'}$, while we must have $\ell(ww') = \ell(w) + \ell(w')$ in order for $\mathcal{A}_{ww'} = \mathcal{A}_{w} \mathcal{A}_{w'}$. We will therefore work with the $\overline{\mathcal{A}}_w$. In addition, it will suffice to work with $\overline{\mathcal{A}}_{s_{\alpha}}$ on simple reflections $s_{\alpha}$.

Let $(\pi, V)$ be a representation of $\widetilde{G}$. Let $\psi$ be an unramified character on $U$. A \textbf{Whittaker functional} on $(\pi,V)$ is a linear functional $W$ on $V$ satisfying
\[
	W(\pi(u)v) = \psi(u) W(v)
\]
for all $u \in U$ and all $v \in V$. In particular, let us take $(\pi,V)$ to be $I(\chi)$, and let $\mathcal{W}^{\chi} \colon I(\chi) \to i(\chi)$ be the linear functional on $I(\chi)$ defined by
\begin{equation}
\label{E:mayday}
	\mathcal{W}^{\chi} (f) = \int_{U^{-}} f(u w_0) \psi(u) \, du
\end{equation}
for all $f \in I(\chi)$, where $U^{-}$ is the opposite group to the unipotent radical of $B$.

Denote by $i(\chi)^{\ast}$ the dual space of $i(\chi)$ and by $\mathcal{S}$ the vector space of all Whittaker functionals $I(\chi) \to \mathbb{C}$. The following is essentially Theorem 6.2 in~\cite{McNamara3}:

\begin{theorem}
\label{thm:McNamara_bij}
There exists an isomorphism $i(\chi)^{\ast} \to \mathcal{S}$, with the isomorphism given by $\mathcal{L} \mapsto W_{\mathcal{L}}$, where $W_{\mathcal{L}} (f) = \mathcal{L}(\mathcal{W}^{\chi} (f))$ for all $f \in I(\chi)$.
\end{theorem}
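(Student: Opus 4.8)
The plan is to prove the theorem by recognizing $\mathcal{W}^{\chi}$ as the Jacquet (Whittaker) integral attached to the open Bruhat cell and then showing that composition with $\mathcal{W}^{\chi}$ sets up a bijection between $i(\chi)^{\ast}$ and the space $\mathcal{S}$ of Whittaker functionals. First I would establish that $\mathcal{W}^{\chi}$ in~\eqref{E:mayday} is well defined: the integral converges absolutely for $\chi$ in a suitable cone of the parameter $\mathbf{z}$ and extends meromorphically in $\mathbf{z}$, exactly as the intertwining integrals do. A change of variables in the Jacquet integral, conjugating the right translation $\pi(u)$ past $w_0$ into $U^{-}$ and invoking the transformation rule~\eqref{E:loc_const_fun} with the unramified character $\psi$, then shows that each $W_{\mathcal{L}} = \mathcal{L} \circ \mathcal{W}^{\chi}$ satisfies $W_{\mathcal{L}}(\pi(u)f) = \psi(u) W_{\mathcal{L}}(f)$. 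Hence $W_{\mathcal{L}} \in \mathcal{S}$, and the map $\mathcal{L} \mapsto W_{\mathcal{L}}$ is well defined and visibly linear.

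Injectivity is the easy direction. Because the double coset $U^{-} w_0 \widetilde{B}$ is open and dense in $\widetilde{G}$, I would produce, for any prescribed target $\xi \in i(\chi)$, a locally constant $f \in I(\chi)$ supported in a small neighborhood of $w_0$ inside this cell with $\mathcal{W}^{\chi}(f) = \xi$; restricted to functions supported on the big cell, $\mathcal{W}^{\chi}$ is an evaluation-type map onto $i(\chi)$. Thus $\mathcal{W}^{\chi}$ is surjective, and if $W_{\mathcal{L}} = \mathcal{L} \circ \mathcal{W}^{\chi}$ vanishes identically then $\mathcal{L}$ vanishes on all of $i(\chi)$, i.e.\ $\mathcal{L} = 0$.

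The substance of the theorem is surjectivity: every $W \in \mathcal{S}$ factors as $\mathcal{L} \circ \mathcal{W}^{\chi}$. I would filter $I(\chi)$ by the $U$-stable subspaces arising from the Bruhat stratification of $\widetilde{B} \backslash \widetilde{G}$ and interpret a Whittaker functional as a $(U,\psi)$-equivariant distribution, applying the Bernstein--Zelevinsky theory of equivariant distributions on a stratified space. The key geometric input is that for every $w \neq w_0$ the stratum $\widetilde{B} w \widetilde{B}$ supports no nonzero $(U,\psi)$-equivariant functional: there is a positive root $\alpha$ whose root subgroup lies, after conjugation, in the stabilizer of a point of the stratum while $\psi$ is nontrivial on it, the genericity of $\psi$ forcing the local contribution to vanish. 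Consequently only the open cell $\widetilde{B} w_0 \widetilde{B}$ contributes, and on it the $(U,\psi)$-equivariant functionals are parametrized precisely by $i(\chi)^{\ast}$ through the Jacquet integral. This yields $\dim \mathcal{S} = \dim i(\chi)^{\ast} = |\widetilde{T}/H|$ and shows the given map is onto.

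The main obstacle is the metaplectic feature that $\widetilde{T}$ is non-abelian and $i(\chi) = \Ind_{H}^{\widetilde{T}}(\chi)$ has dimension $|\widetilde{T}/H| = |\Lambda/\Lambda^{(n)}|$ rather than being one-dimensional; the open-cell contribution must be identified with the full dual $i(\chi)^{\ast}$ (not a single line), and one must track the cocycle $\sigma$ through the distribution-theoretic computation so that the surviving functionals really are indexed by $i(\chi)^{\ast}$. This analysis is carried out in McNamara~\cite{McNamara3}, whose Theorem 6.2 gives the stated isomorphism; I would follow that argument, adapting the nondegeneracy of $\psi$ on each non-open stratum to the $C_r$ root datum fixed above.
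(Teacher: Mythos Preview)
The paper does not supply its own proof of this theorem; it simply states the result as ``essentially Theorem~6.2 in~\cite{McNamara3}'' and moves on. Your proposal is therefore not competing with any argument in the paper---rather, you have sketched the content of the cited McNamara result. Your outline (Jacquet integral, Bruhat filtration, Bernstein--Zelevinsky analysis of $(U,\psi)$-equivariant distributions, genericity of $\psi$ killing the non-open strata, identification of the open-cell contribution with $i(\chi)^{\ast}$) is the standard route and matches the approach in~\cite{McNamara3}, so your summary is accurate and appropriate.
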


\noindent It follows that $\dim \mathcal{S} = |\widetilde{T}/H |$. Moreover, it follows from results in~\cite{McNamara2} that $\widetilde{T}/H \cong \Lambda/\Lambda^{(n)}$.

Let $\{ W_{b}^{\chi}\}_{b}^{\phantom{\chi}}$ be a basis for $\mathcal{S}$, where the $W_{b}^{\chi}$ are indexed by a complete set of coset representatives $b$ for $\widetilde{T}/H$. Let $\{ \mathcal{L}_b \}_{b}$ be the corresponding basis for $i(\chi)^{\ast}$ under the bijection in Theorem~\ref{thm:McNamara_bij}.

Let $w \in W$, and let $a$ be a coset representative for $\widetilde{T}/H$. Consider now the Whittaker functional $\mathcal{W}^{\prescript{w}{}{\chi}}$ on $I(\prescript{w}{}{\chi})$, where $\mathcal{W}^{\chi}$ is defined in~\eqref{E:mayday}. The composition $\mathcal{W}^{\prescript{w}{}{\chi}} \circ \overline{\mathcal{A}}_{w} \colon I(\chi) \to i(\prescript{w}{}{\chi})$ is a Whittaker functional, which becomes an $i(\chi)$-valued Whittaker functional after composing with the isomorphism $i(\prescript{w}{}{\chi}) \to i(\chi)$. Composing further with the linear functional $\mathcal{L}_{a} \in i(\chi)^{\ast}$ yields a $\mathbb{C}$-valued Whittaker functional that we denote by $W_{a}^{\prescript{w}{}{\chi}} \circ \overline{\mathcal{A}}_{w}$. Since $W_{a}^{\prescript{w}{}{\chi}} \circ \overline{\mathcal{A}}_{w} \in \mathcal{S}$, we can write it in terms of the basis vectors $W_{b}^{\chi}$:
\begin{equation}
\label{E:Whittaker_expansion}
	W_{a}^{\prescript{w}{}{\chi}} \circ \overline{\mathcal{A}}_{w} = \sum_{b \in \widetilde{T}/H} \tau_{a,b}^{(w)} (\mathbf{z}) W_{b}^{\chi}
\end{equation}
for some rational functions $\tau_{a,b}^{(w)}(\mathbf{z})$. It suffices to know all of the $\tau_{a,b}^{(w)}(\mathbf{z})$ for simple reflections. The following is essentially Lemma I.3.3 in~\cite{KP} or Theorem 13.1 in~\cite{McNamara3}:

\begin{theorem}
\label{T:Kazhdan--Patterson}
Let $s_{\alpha}$ be a simple reflection. Let $a$ and $b$ be coset representatives for $\widetilde{T}/H$, with $a = \varpi^{\nu}$ and $b = \varpi^{\mu}$ for some $\nu$,~$\mu \in \Lambda$. Write $\tau_{\nu, \mu}$ for the structure constant $\smash{ \tau_{a, b}^{(s_{\alpha})} (\mathbf{z})}$ in~\eqref{E:Whittaker_expansion}. Then $\tau_{\nu, \mu}^{\vphantom{1}} = \tau_{\nu, \mu}^1 + \tau_{\nu, \mu}^2$, where $\tau^1$ vanishes unless $\nu \sim \mu \mod \Lambda^{(n)}$, and $\tau^2$ vanishes unless $\nu \sim s_{\alpha}(\mu) + \alpha^{\vee} \mod \Lambda^{(n)}$. Moreover, if we set $C$ and $D$ to be
\[
	C = \biggl(n_{\alpha} \biggl\lceil \dfrac{B(\alpha^{\vee},\mu)}{n_{\alpha} Q(\alpha^{\vee})} \biggr\rceil - \dfrac{B(\alpha^{\vee},\mu)}{Q(\alpha^{\vee})} \biggr) \alpha^{\vee} \, ,
	\qquad
	D = g(B(\alpha^{\vee},\mu) - Q(\alpha^{\vee})),
\]
where $n_{\alpha}$ is given as in~\eqref{E:number_na} and $\lceil \, \cdot \, \rceil$ is the ceiling function, then
\[
	\tau_{\mu,\mu}^{1} = (1-q^{-1})\dfrac{\mathbf{z}^C}{1 - q^{-1} \mathbf{z}^{n_{\alpha} \alpha^{\vee}}} \, , \qquad
	\tau_{s_{\alpha}(\mu) + \alpha, \mu}^{2} = q^{-1} D \mathbf{z}^{- \alpha^{\vee}}\dfrac{1 - \mathbf{z}^{n_{\alpha} \alpha^{\vee}}}{1 - q^{-1} \mathbf{z}^{n_{\alpha}\alpha^{\vee}}} \, .
\]
\end{theorem}

In the next section, we will prove that the relationship between partition functions and $R$-vertices for our symplectic-ice model is the same as the relationship between Whittaker functionals and intertwining operators.

A \textbf{Whittaker function} is a nonzero function $W_{\chi} \colon \widetilde{G} \to \mathbb{C}$ that satisfies 
\[
	W_{\chi} (\zeta u g k) = \zeta \psi(u) W_{\chi}(g)
\]
for all $\zeta \in \mu_n$, $u \in U$, $g \in \widetilde{G}$, and $k \in K$. We explicitly mention a Whittaker function. Let $a$ be a coset representative for $\widetilde{T}/H$. Composing the function $\widetilde{G} \to i(\chi)$, defined by $g  \mapsto \mathcal{W}^{\chi} (\pi(g) \phi_K^{\chi})$, with the linear functional $\mathcal{L}_{a} \in i(\chi)^{\ast}$ yields a Whittaker function $\smash{ W_{a}^{\circ} \colon \widetilde{G} \to \mathbb{C} }$, called the \textbf{spherical Whittaker function}. 

\section{Connections to Intertwining Operators and Whittaker Functions}
\label{sec:Connections_intertwining_Whittaker}

In this section, we consider the case with $G = \SO (2r+1)$. We continue to denote by $Z_{\lambda} (\mathbf{z}; \mathbf{c})$ the partition function of our model, having top boundary conditions determined by $\lambda$, such that the leftmost edges in rows $\overline{1}$,~\ldots,~$\overline{r}$ for each admissible state have charges congruent modulo $n$ to the integers $c_1$,~\ldots,~$c_r$, respectively, where $\mathbf{c} = (c_1,\ldots,c_r)$, and where (as usual) along the left boundary all spins are $+$ and all charges in rows $1$,~\ldots,~$r$ are $0$. Here $c_i \in [0, n-1]$ for every $i$. We continue using the notation from Section~\ref{sec:Algebraic_Preliminaries}.

\begin{conjecture}
\label{C:conj_Z_spherical}
Let $\mathbf{c} = (c_1,\ldots,c_r) \in [0,n-1]^{r}$. Let $a$ be a coset representative for $\widetilde{T}/H$ with $a = \varpi^{\nu}$ for some $\nu \in \Lambda$ satisfying $\nu - \rho = c_1 e_1 + \cdots + c_r e_r$. Then $Z_{\lambda} (\mathbf{z}; \mathbf{c})$ is a spherical Whittaker function, i.e., $Z_{\lambda} (\mathbf{z}; \mathbf{c}) = \mathbf{z}^{\varepsilon(a,\mathbf{c})} W_{a}^{\circ} ( \pi(\varpi^{\lambda}) \phi_K^{\chi})$, where $\varepsilon(a,\mathbf{c}) \in \mathbb{Z}^r$ depends on both $a$ and $\mathbf{c}$.
\end{conjecture}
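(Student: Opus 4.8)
The plan is to deduce the conjecture from the two proven functional equations, Theorems~\ref{T:transposition} and~\ref{thm:inverse}, by matching them term-by-term against the action of the normalized intertwining operators on Whittaker functionals (Proposition~\ref{T:Kazhdan--Patterson}), and then promoting that matching to an equality of functions via a rigidity argument modeled on McNamara's algorithm~\cite{McNamara}. First I would fix the dictionary between the two sides. The statistical index $\mathbf{c} \in [0,n-1]^r$ ranges over $n^r$ values, matching $\dim \mathcal{S} = |\widetilde{T}/H| = |\Lambda/\Lambda^{(n)}| = n^r$; for each $\mathbf{c}$ I take the coset representative $a = \varpi^{\nu}$ with $\nu - \rho = c_1 e_1 + \cdots + c_r e_r$ as in the statement, and I identify $\chi$ with its parameter $\mathbf{z}$ through~\eqref{E:short} and the formulas following~\eqref{E:scalar_inter}. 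Under this dictionary the family $\{Z_{\lambda}(\mathbf{z};\mathbf{c})\}_{\mathbf{c}}$ is to be identified, up to the monomial twist $\mathbf{z}^{\varepsilon(a,\mathbf{c})}$, with the family $\{W_a^{\circ}(\pi(\varpi^{\lambda})\phi_K^{\chi})\}_a$.

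The central computation is to show that the two sets of simple-reflection transformation laws coincide. From $\overline{\mathcal{A}}_{s_{\alpha}}\phi_K^{\chi} = \phi_K^{\prescript{s_{\alpha}}{}{\chi}}$, the commutation of $\overline{\mathcal{A}}_w$ with the $\widetilde{G}$-action, and the expansion~\eqref{E:Whittaker_expansion}, one gets $W_a^{\prescript{w}{}{\chi}}(\pi(g)\phi_K^{\prescript{w}{}{\chi}}) = \sum_b \tau_{a,b}^{(w)}(\mathbf{z})\, W_b^{\chi}(\pi(g)\phi_K^{\chi})$, so the Whittaker function at $s_{\alpha}\cdot \mathbf{z}$ is a two-term combination of Whittaker functions at $\mathbf{z}$ with coefficients $\tau^1_{\mu,\mu}$ and $\tau^2_{s_{\alpha}(\mu)+\alpha,\mu}$. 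I would then set $v = q^{-1}$ (as in Proposition~\ref{P:sym_pattern_bij}), use $n_{\alpha}=n$ since $n$ is odd, and solve~\eqref{E:identity1} for $Z(s_i\mathbf{z};\mathbf{c})$: the coefficient of $Z(\mathbf{z};\mathbf{c})$ should match $\tau^1_{\mu,\mu}$ (the factors $1-q^{-1}$ and the denominator $z_j^n - v z_i^n$ matching $(1-q^{-1})$ and $1 - q^{-1}\mathbf{z}^{n_{\alpha}\alpha^{\vee}}$), while the coefficient of $Z(\mathbf{z};s_i\mathbf{c})$ should match $\tau^2$, with $D = g(B(\alpha^{\vee},\mu)-Q(\alpha^{\vee}))$ accounting for the $g(e)$ in~\eqref{E:identity1} and $e\equiv c_i-c_j$ reading off $C$. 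The analogous comparison of Theorem~\ref{thm:inverse} with $\tau$ for the long root $\alpha_r$ — where the doubled argument $g(2e)$ (cf.\ the ``replace $g(a)$ by $g(2a)$'' alteration in the proof of Theorem~\ref{thm:inverse}) encodes $Q(\alpha^{\vee})$ for the long root, and the reflection $z_r \leftrightarrow z_r^{-1}$ produces the $z_r^{-n}$ terms — completes the simple-reflection matching; the twist $\mathbf{z}^{\varepsilon(a,\mathbf{c})}$ is then forced and determined by requiring consistency across the simple reflections.

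Finally I would upgrade this to an equality of functions. The Weyl group of type $C_r$ is generated by $s_1,\dots,s_{r-1}$ and $s_r$, and McNamara's algorithm computes $W_a^{\circ}(\pi(\varpi^{\lambda})\phi_K^{\chi})$ by applying normalized intertwining operators along a reduced word for $w_0$ starting from an explicit spherical base value; since $Z_{\lambda}$ obeys the identical recursion by the previous step, both objects are produced by the same algorithm from the same data once a single base case is fixed. For the base case I would compare an extremal monomial — the term of $Z_{\lambda}$ coming from the distinguished (lowest-weight) admissible state against the Casselman--Shalika/Shintani leading term of the spherical Whittaker function — which, together with the common transformation law and the bounded Laurent degree dictated by $\lambda$, pins down the function uniquely. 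The hard part will be the rigidity step, specifically the self-consistency of the functional equations: composing them along two reduced words for the same Weyl element must yield the same relation. For the short-root (type $A$) braid relations $s_i s_{i+1} s_i = s_{i+1} s_i s_{i+1}$ this is precisely what the Yang--Baxter equation (Theorem~\ref{thm:metaplecticYBE}) guarantees through the ``push the $R$-vertex through'' moves, but the mixed type-$C$ braid relation $s_{r-1} s_r s_{r-1} s_r = s_r s_{r-1} s_r s_{r-1}$, which couples Theorem~\ref{T:transposition} with Theorem~\ref{thm:inverse}, is not a single YBE move and demands a separate verification; controlling the monomial twist $\mathbf{z}^{\varepsilon(a,\mathbf{c})}$ coherently through these longer braid words is the most delicate point, and is presumably why the statement is recorded here as a conjecture rather than a theorem.
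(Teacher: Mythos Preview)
The paper does not prove this statement: it is recorded as Conjecture~\ref{C:conj_Z_spherical}, and immediately afterward the paper says ``We will not prove Conjecture~\ref{C:conj_Z_spherical}''. What the paper does instead is prove Proposition~\ref{P:taus_R-vertices} (the short-root $\tau$'s match the modified $\Gamma\Gamma$ $R$-weights), state Conjectures~\ref{C:modified_functional1} and~\ref{C:modified_functional2} as conditional consequences of Conjecture~\ref{C:conj_Z_spherical}, and close with ``We expect to be able to use an algorithm of McNamara~\cite{McNamara} to prove Conjecture~\ref{C:conj_Z_spherical}''. So there is no proof in the paper for you to be compared against; your proposal is essentially the strategy the paper gestures toward as future work, and you have correctly located the type-$C$ braid relation $s_{r-1}s_r s_{r-1}s_r = s_r s_{r-1}s_r s_{r-1}$ as a central obstruction.

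As an attempted proof, however, your sketch has gaps beyond that braid relation. First, the long-root matching you assert (``The analogous comparison of Theorem~\ref{thm:inverse} with $\tau$ for the long root $\alpha_r$ \ldots\ completes the simple-reflection matching'') is not carried out anywhere in the paper: Proposition~\ref{P:taus_R-vertices} handles only the short roots, and the long-root statement appears only inside Conjecture~\ref{C:modified_functional2}, which itself assumes the very conclusion you are trying to prove. Second, McNamara's algorithm in~\cite{McNamara} is for $\GL(r)$; adapting it to metaplectic covers of $\SO(2r+1)$ is not in the cited literature, and the paper's hedged phrasing makes clear this is an open problem. Third, your rigidity step requires more than consistency of the functional equations under braid moves: you must also know that the recursion along a reduced word for $w_0$ actually reconstructs the whole function from a single extremal term. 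In type~A this is supplied by McNamara's Iwasawa/Mirkovi\'c--Vilonen computation, not by the functional equations alone, and no analogue is available here. These are the reasons the statement remains a conjecture.
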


\begin{remark}\hfill
\begin{enumerate}
\item Friedberg and Zhang~\cite{FZ} discuss some conjectural connections between the $p$-parts of a metaplectic Eisenstein series and the local Whittaker function. Conjecture~\ref{C:conj_Z_spherical} would then follow from that result and Theorem~\ref{T:sym_pattern_bij}. This is a topic for future exploration.
\item Assume Conjecture~\ref{C:conj_Z_spherical}, and suppose $n=1$. We obtain a new proof of the Casselman--Shalika formula by combining the conjecture above with either the combinatorial result of Hamel and King~\cite{HK1} or the statistical-mechanical result of Ivanov~\cite{Ivanov}.
\end{enumerate}
\end{remark}

We will not prove Conjecture~\ref{C:conj_Z_spherical}, but we will prove that the Boltzmann weights of the $\Gamma \Gamma$ braided ice match the structure constants $\tau^1$ and $\tau^2$ in~\eqref{E:Whittaker_expansion}. We begin by modifying the Boltzmann weights in Tables~\ref{tab:Delta_Gamma} and~\ref{tab:braid_weights}, obtaining the weights in Tables~\ref{tab:Revised_Delta_Gamma} and~\ref{tab:modified_braid_weights_GG}. Refer to the configurations in Table~\ref{tab:modified_braid_weights_GG} as $\smash{ \widehat{R} }$-vertices. These modified weights are obtained by following the change-of-basis procedure outlined in~\cite{BBB}.

Recall from Theorem~\ref{P:n_admissible} that it suffices to set the decoration $a$ in any of the six admissible configurations equal to $0$ if the spin is $+$ (resp., $-$) for $\Delta$ ice (resp., $\Gamma$ ice). Define the functions $f_{\Delta}$ and $f_{\Gamma}$ as follows: for every decorated spin $\alpha$, if $\alpha$ has ice type $\Delta$, then
\[
	f_{\Delta}(\alpha, z) = 
	\begin{cases}
	z^{a}	&\text{if $\alpha = -a$ and $a \in [1, n-1]$,}\\
	1		&\text{if $\alpha = +0$,}
	\end{cases}
\]
while if $\alpha$ has ice type $\Gamma$, then
\[
	f_{\Gamma}(\alpha, z) = 
	\begin{cases}
	z^{a}	&\text{if $\alpha = +a$ and $a \in [1, n-1]$,}\\
	1		&\text{if $\alpha = -0$.}
	\end{cases}
\]
For all $X$,~$Y \in \{ \Delta, \Gamma \}$, the Boltzmann weights
\[
	\wt_{X} 
	\biggl(\!
	\mathord{%
	\begin{tikzpicture}[scale=0.5, every node/.style={scale=1.0}, baseline=-3.5]
	\draw[semithick] (1, 0.60) -- (1,-0.60);
	\draw[fill=white, white] (1,0) circle (4.5pt);	
	\draw[semithick] (0.40,0) -- (1.6,0);
	\draw[fill=black] (1,0) circle (2.25pt);	
	\begin{scope}
	\tikzstyle{spin}=[draw, circle] 
	\path[black, very thin] 
		(1,0.6) 
		node[inner sep=-0.1pt, fill=white, spin] 
		{\tiny$\phantom{\boldsymbol{+}}$}
		(1.6,0) 
		node[inner sep=-0.1pt, fill=white, spin] 
		{\tiny$\phantom{\boldsymbol{+}}$}
		(1.6,0) node {\scriptsize$\beta$}
		(1,-0.6) 
		node[inner sep=-0.1pt, fill=white, spin] 
		{\tiny$\phantom{\boldsymbol{+}}$}
		(0.4,0) 
		node[inner sep=-0.1pt, fill=white, spin] 
		{\tiny$\phantom{\boldsymbol{+}}$}
		(0.4,0) node {\scriptsize$\alpha$};
	\end{scope}
	\end{tikzpicture}
	}
	\!\biggr),
	\qquad
	\wt_{X Y} 
	\biggl(
	\mathord{%
	\begin{tikzpicture}[scale=0.5, every node/.style={scale=1.0}, baseline=-3.5]
	\begin{scope}[rotate around={45:(1,0)}]
	\draw[semithick] (1, 0.60) -- (1,-0.60);
	\draw[fill=white, white] (1,0) circle (4.5pt);	
	\draw[semithick] (0.40,0) -- (1.6,0);
	\draw[fill=black] (1,0) circle (2.25pt);	
	\begin{scope}
	\tikzstyle{spin}=[draw, circle] 
	\path[black, very thin] 
		(1,0.6) 
		node[inner sep=-0.1pt, fill=white, spin] 
		{\tiny$\phantom{\boldsymbol{+}}$}
		(1,0.6) node {\scriptsize$\beta$}
		(1.6,0) 
		node[inner sep=-0.1pt, fill=white, spin] 
		{\tiny$\phantom{\boldsymbol{+}}$}
		(1.6,0) node {\scriptsize$\gamma$}
		(1,-0.6) 
		node[inner sep=-0.1pt, fill=white, spin] 
		{\tiny$\phantom{\boldsymbol{+}}$}
		(1,-0.6) node {\scriptsize$\delta$}
		(0.4,0) 
		node[inner sep=-0.1pt, fill=white, spin] 
		{\tiny$\phantom{\boldsymbol{+}}$}
		(0.4,0) node {\scriptsize$\alpha$};
	\end{scope}
	\end{scope}
	\end{tikzpicture}
	}
	\biggr),
\]
taken from Tables~\ref{tab:Delta_Gamma} and~\ref{tab:braid_weights}, are multiplied respectively by
\[
	\dfrac{f_{X} (\alpha, z_i)}{f_{X} (\beta, z_i)} \, , \qquad
	\dfrac{f_{X} (\alpha, z_1) f_{Y} (\beta, z_2)}{f_{X} (\gamma, z_1) f_{Y} (\delta, z_2)} \, .
\] 
In addition, we divide each $\Gamma$ weight in Table~\ref{tab:Delta_Gamma} by $z_{i}$. We divide each $\Delta \Gamma$ weight in Table~\ref{tab:braid_weights} by $z_{2}^n - v^n z_{1}^n$, and we divide each $\Gamma \Delta$, $\Delta \Delta$, and $\Gamma \Gamma$ weight by $z_{1}^n - v z_{2}^n$. The reason for these divisions will become clear in a moment. In Table~\ref{tab:modified_braid_weights_GG}, we list only the modified $\Gamma \Gamma$ weights.

\begin{table}[b]
\caption{Modified Boltzmann weights of $\Delta$ ice and $\Gamma$ ice.}
\centering
\noindent\makebox[\textwidth][c]{
\footnotesize
\label{tab:modified_braid_weights_GG}
\end{threeparttable}
\end{table}

\begin{theorem}
The YBE is satisfied with the Boltzmann weights given in Tables~\ref{tab:Revised_Delta_Gamma} and~\ref{tab:modified_braid_weights_GG}.
\end{theorem}
\begin{proof}
The original Boltzmann weights satisfy the YBE. Since our modifications apply to all these weights, the YBE is not affected.
\end{proof}

Brubaker, Bump, Chinta, Friedberg, and Gunnells~\cite{BBCFG} showed that for $G = \GL (r,F)$, Boltzmann weights exist for a generalization of the six-vertex model (i.e., type A metaplectic ice) for which the partition functions are values of spherical Whittaker functions on $\smash{\widetilde{G}}$. It was conjectured that properties of these Whittaker functions arose from a YBE for the model, though no YBE was found.

Later on, Brubaker, Buciumas, and Bump~\cite{BBB} proved this conjecture by finding Boltzmann weights for metaplectic ice that allowed for a YBE. This demonstrated a new connection between quantum groups and spherical Whittaker functions on $\smash{\widetilde{G}}$. (When the degree $n$ of the cover of $G$ is $1$, the metaplectic ice matches the model used by Brubaker--Bump--Friedberg~\cite{BBF2}.) In addition, they proved that the $R$-matrix, a matrix that encodes the solutions to the corresponding YBE, is a Drinfeld twist of the $R$-matrix for the quantum affine Lie superalgebra $\smash{ \widehat{\gl}(1 | n) }$, and that the scattering matrix of the intertwining operator corresponding to a simple reflection on the finite-dimensional vector space of Whittaker functionals for $\smash{\widetilde{G}}$ is the $R$-matrix of quantum affine $\gl (n)$, modified by Drinfeld twisting. (This scattering matrix was originally computed by Kazhdan--Patterson~\cite{KP}.)

It was shown in~\cite{BBB} that some of the modified weights in Table~\ref{tab:modified_braid_weights_GG} are related to the identity of Whittaker functionals given in~\eqref{E:Whittaker_expansion}. We will partially prove a similar result for symplectic ice. From now on, $G$ stands for $\SO(2r+1, F)$.

\begin{theorem}[Brubaker--Buciumas--Bump,~\cite{BBB}, Proposition 8]
\label{P:taus_R-vertices}
Let $\nu \in X_{\ast}(T) \cong \mathbb{C}[\Lambda]$, where $\nu - \rho = c_1 e_1 + \cdots + c_r e_r$ for some integers $c_i \in [0,n-1]$. Let $i \in \{ 1, 2, \ldots,r-1\}$, and set $j = i+1$. Let $s_i \vcentcolon = s_{\alpha}$ be the simple reflection $i \leftrightarrow j$. Let $a$,~$b \in \mathbb{Z}$, and suppose $a \equiv c_i \pod{n}$ and $b \equiv c_{j} \pod{n}$. Write $\tau_{\nu, \mu}$ for the structure constant $\tau_{a, b}^{(s_i)} (\mathbf{z})$, as in Theorem~\ref{T:Kazhdan--Patterson}. Let $e \equiv c_i - c_j \pod{n}$ with $e \in [0, n-1]$. Let $\wt_{\Gamma \Gamma}$ be the $\Gamma \Gamma$ Boltzmann weights for the $\smash{ \widehat{R} }$-vertices in Table~\ref{tab:modified_braid_weights_GG}, with $v = q^{-1}$. If $a \not \equiv b \pod{n}$, then
\[
	\tau_{\nu, \nu}^{1} = 
	\wt_{\Gamma \Gamma}
	\biggl(\!\!
	\mathord{%

	}
	\!\!\biggr).
\]
\end{theorem}
\begin{proof}
Suppose $a \not \equiv b \pod{n}$. Then
\[
	\tau_{\nu,\nu}^1 
	= (1- q^{-1}) \dfrac{\mathbf{z}^{-n \lceil B(\alpha,\nu)/n \rceil \alpha}}{1 - q^{-1} \mathbf{z}^{-n\alpha}}
	= \dfrac{1-v}{1-v \mathbf{z}^{-n\alpha}} 
	\begin{cases}
	\mathbf{z}^{-n\alpha} &\text{if $c_i > c_j$}\\
	1 &\text{if $c_i < c_j$,}
	\end{cases} 
\]
which equals the modified $\Gamma \Gamma$ Boltzmann weight of the first $\smash{ \widehat{R} }$-vertex mentioned in the theorem. Similarly,
\begin{align*}
	\tau_{s_i (\nu) + \alpha, \mu}^{2} 
	= g(\langle \alpha,\nu-\rho \rangle - 1) \dfrac{1-\mathbf{z}^{-n\alpha}}{1-q^{-1}\mathbf{z}^{-n\alpha}} 
	= g(c_i-c_j) \dfrac{1 - \mathbf{z}^{-n\alpha}}{1 - v \mathbf{z}^{-n\alpha}} \, ,
\end{align*}
which equals the modified $\Gamma \Gamma$ Boltzmann weight of the second $\smash{ \widehat{R} }$-vertex mentioned in the theorem.

Now suppose $a \equiv b \pod{n}$. Then the assertion follows by setting $c_i = c_j$ in the Boltzmann weights of the two $\widehat{R}$-vertices above, where we take the first case for the second $\widehat{R}$-vertex above:
\begin{align*}
	\tau_{\nu,\nu}^{1} + \tau_{s_i (\nu) + \alpha, \mu}^{2} 
	&= \dfrac{1-v}{1-v \mathbf{z}^{-n\alpha}} \left.
	\begin{cases}
	\mathbf{z}^{-n\alpha} &\text{if $c_i > c_j$}\\
	1 &\text{if $c_i < c_j$}
	\end{cases} \right\} + g(c_i-c_j) \dfrac{1 - \mathbf{z}^{-n\alpha}}{1 - v \mathbf{z}^{-n\alpha}}
	\\[1ex]
	&= \dfrac{(1-v)\mathbf{z}^{-n\alpha}}{1-v\mathbf{z}^{-n\alpha}} + \dfrac{-v(1 - \mathbf{z}^{-n\alpha})}{1-v\mathbf{z}^{-n\alpha}} \\[1ex]
	&= \dfrac{\mathbf{z}^{-n\alpha} - v}{1 - v \mathbf{z}^{-n\alpha}} \, ,
\end{align*}
which equals the modified $\Gamma \Gamma$ Boltzmann weight of the third $\widehat{R}$-vertex mentioned in the theorem.
\end{proof}

\begin{conjecture}
\label{C:modified_functional1}
Continue using the notation from Theorems~\ref{T:Kazhdan--Patterson} and~\ref{P:taus_R-vertices}. Write $w$ for the simple reflection attached to the simple short root for which $w$ interchanges $i$ and $j \vcentcolon = i+1$. Consider symplectic ice with rows $\overline{1}$,~$\overline{2}$,~\ldots,~$\overline{r}$ having fixed decorated spins of\/ $+c_1$,~$+c_2$,~\ldots,~$+c_r$, respectively, as described at the beginning of Section~\ref{sec:Function_Eqns_Partition}. Assume Conjecture~\ref{C:conj_Z_spherical} is true. It follows from Theorems~\ref{T:transposition} and~\ref{P:taus_R-vertices} that if $c_i \neq c_{j}$, then the identity
\[
	W_{a}^{\prescript{w}{}{\chi}} \circ 
	\overline{\mathcal{A}}_{w} 
	(\pi(\varpi^{\lambda}) \phi_{K}^{\chi}) 
	= 
	\tau_{\nu, \nu}^1 W_{a}^{\chi}
	(\pi(\varpi^{\lambda}) \phi_{K}^{\chi}) 
	+ 
	\tau_{w \cdot \nu, \nu}^2 W_{w \cdot a}^{\chi}
	(\pi(\varpi^{\lambda}) \phi_{K}^{\chi}) 
\]
of spherical Whittaker functions is equivalent to the identity
\begin{equation}
\label{E:identity1_modified}
	Z_{\lambda}(w (\mathbf{z}); \mathbf{c}) 
	=
	\wt_{\Gamma \Gamma} 
	\biggl(\!\!
	\mathord{%

	}
	\!\!\biggr)
	\, Z_{\lambda} (\mathbf{z}; w (\mathbf{c}))
\end{equation}
of partition functions. If $c_i = c_j$, then~\eqref{E:identity1_modified} is rewritten by adding the Boltzmann weights of the two $\widehat{R}$-vertices in~\eqref{E:identity1_modified} as was done in the proof of Theorem~\ref{P:taus_R-vertices}. (Here $w (\mathbf{z})$ means that $z_i$ and $z_j$ are interchanged.)
\end{conjecture}

The reason for the divisions in the modified weights mentioned at the beginning of the section was so that both the caduceus constant given in~\eqref{E:cad_result} and the fish constant given in Lemma~\ref{thm:Fish2} reduce to $1$ when using these weights.

\begin{conjecture}
\label{C:modified_functional2}
Continue using the notation from Theorems~\ref{T:Kazhdan--Patterson} and \ref{P:taus_R-vertices}, Conjecture~\ref{C:modified_functional1}, and Section~\ref{sec:Function_Eqns_Partition}. Write $w$ for the simple reflection attached to the simple long root for which $w$ interchanges $r$ and $-r$. Assume Conjecture~\ref{C:conj_Z_spherical} is true. It follows from Theorems~\ref{thm:inverse} and~\ref{P:taus_R-vertices} that the identity 
\[
	Z_{\lambda}(w(\mathbf{z}); \mathbf{c}) 
	=
	\overline{\wt}_{\Gamma \Gamma} 
	\biggl(\!\!
	\mathord{%
	\begin{tikzpicture}[scale=0.5, every node/.style={scale=1.0}, baseline=-3.5]
	\begin{scope}[rotate around={45:(1,0)}]
	\draw[semithick, black] (1, 0.60) -- (1,-0.60);
	\draw[fill=white, white] (1,0) circle (4.5pt);	
	\draw[semithick, black] (0.40,0) -- (1.6,0);
	\draw[fill=black, black] (1,0) circle (2.25pt);	
	\begin{scope}
	\tikzstyle{spin}=[draw, circle] 
	\path[black, very thin] 
		(1,0.6) 
		node[inner sep=-0.5pt, fill=white, spin] 
		{\tiny$\boldsymbol{+}$}
		node[left] {\scriptsize$a \;$}
		(1.6,0) 
		node[inner sep=-0.5pt, fill=white, spin] 
		{\tiny$\boldsymbol{+}$}
		node[right] {\scriptsize$\; a$}
		(1,-0.6) 
		node[inner sep=-0.5pt, fill=white, spin] 
		{\tiny$\boldsymbol{+}$}
		node[right] {\scriptsize$\; b$}
		(0.4,0) 
		node[inner sep=-0.5pt, fill=white, spin] 
		{\tiny$\boldsymbol{+}$}
		node[left] {\scriptsize$b \;$};
	\end{scope}
	\end{scope}
	\end{tikzpicture}
	}
	\!\!\biggr)
	\, Z_{\lambda} (\mathbf{z}; \mathbf{c})	
	+
	\overline{\wt}_{\Gamma \Gamma} 
	\biggl(\!\!
	\mathord{%
	\begin{tikzpicture}[scale=0.5, every node/.style={scale=1.0}, baseline=-3.5]
	\begin{scope}[rotate around={45:(1,0)}]
	\draw[semithick, black] (1, 0.60) -- (1,-0.60);
	\draw[fill=white, white] (1,0) circle (4.5pt);	
	\draw[semithick, black] (0.40,0) -- (1.6,0);
	\draw[fill=black, black] (1,0) circle (2.25pt);	
	\begin{scope}
	\tikzstyle{spin}=[draw, circle] 
	\path[black, very thin] 
		(1,0.6) 
		node[inner sep=-0.5pt, fill=white, spin] 
		{\tiny$\boldsymbol{+}$}
		node[left] {\scriptsize$a \;$}
		(1.6,0) 
		node[inner sep=-0.5pt, fill=white, spin] 
		{\tiny$\boldsymbol{+}$}
		node[right] {\scriptsize$\; b$}
		(1,-0.6) 
		node[inner sep=-0.5pt, fill=white, spin] 
		{\tiny$\boldsymbol{+}$}
		node[right] {\scriptsize$\; a$}
		(0.4,0) 
		node[inner sep=-0.5pt, fill=white, spin] 
		{\tiny$\boldsymbol{+}$}
		node[left] {\scriptsize$b \;$};
	\end{scope}
	\end{scope}
	\end{tikzpicture}
	}
	\!\!\biggr)_2
	\, Z_{\lambda} (\mathbf{z}; c_1, \ldots, c_{r-1}, c_r - 2e)
\]
of partition functions is equivalent to an identity of spherical Whittaker functions, where $a$ and $b$ satisfy $a-b \equiv c_r - N \pod{n}$. (Here $w(\mathbf{z})$ and $\overline{\wt}$ mean that $z_r$ and $z_{r}^{-1}$ are interchanged, and the subscript $2$ in $\overline{\wt}( \,\cdot\, )_2$ means that each occurrence of $g(a)$ is replaced by $g(2a)$.)
\end{conjecture}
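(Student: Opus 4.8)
The plan is to mirror the short-root argument underlying Conjecture~\ref{C:modified_functional1}, replacing Theorem~\ref{T:transposition} by Theorem~\ref{thm:inverse} throughout and carrying the long-root bookkeeping (the flip $z_r \leftrightarrow z_r^{-1}$ and the substitution $g(a) \mapsto g(2a)$) through every step. First I would rearrange the functional equation~\eqref{E:result_inverse}: dividing by $(z_{r}^{-n} - v z_{r}^n)$ isolates $Z_{\lambda}(w(\mathbf{z}); \mathbf{c})$ and exhibits it as
\[
	Z_{\lambda}(w(\mathbf{z}); \mathbf{c})
	= \frac{(1-v) z_{r}^{n-2e}}{z_{r}^{-n} - v z_{r}^n} \, Z_{\lambda}(\mathbf{z}; \mathbf{c})
	+ \frac{g(2e)(z_{r}^{-n} - z_{r}^n)}{z_{r}^{-n} - v z_{r}^n} \, Z_{\lambda}(\mathbf{z}; c_1, \ldots, c_{r-1}, c_r - 2e),
\]
which already has the shape of the asserted identity. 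The appearance of $g(2e)$ here, rather than the $g(e)$ of Theorem~\ref{T:transposition}, is precisely the source of the subscript-$2$ modification: the long simple root $\alpha = 2e_r$ has $Q(\alpha^{\vee}) = 2$, so this doubling is exactly what the Kazhdan--Patterson datum $D = g(B(\alpha^{\vee},\mu) - Q(\alpha^{\vee}))$ of Proposition~\ref{T:Kazhdan--Patterson} produces.

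Next I would pass to the Whittaker side. Evaluating the expansion~\eqref{E:Whittaker_expansion} at the spherical vector $\pi(\varpi^{\lambda})\phi_K^{\chi}$ and using that $\overline{\mathcal{A}}_w$ is $\widetilde{G}$-equivariant with $\overline{\mathcal{A}}_w \phi_K^{\chi} = \phi_K^{\prescript{w}{}{\chi}}$, the left-hand side becomes the spherical Whittaker function attached to $\prescript{w}{}{\chi}$, i.e.\ to $w(\mathbf{z})$, while the two terms on the right are indexed by the cosets $\nu$ and $s_{\alpha}(\nu) + \alpha^{\vee}$. For the long simple root these cosets correspond to the decorations $c_r$ and $c_r - 2e$ with $e \equiv c_r - N \pmod{n}$, matching the third argument $C = c_r - 2e$ in the statement. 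Invoking Conjecture~\ref{C:conj_Z_spherical} to rewrite each spherical Whittaker value as $\mathbf{z}^{\varepsilon} Z_{\lambda}$ then turns the Whittaker identity into an identity among the same three partition functions, with coefficients $\tau^1$ and $\tau^2$, up to the monomial ratios coming from the exponents $\varepsilon$.

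The remaining work is to identify these coefficients with the modified weights $\overline{\wt}_{\Gamma\Gamma}$ of Table~\ref{tab:modified_braid_weights_GG}. This is the long-root analogue of Proposition~\ref{P:taus_R-vertices}: I would compute $\tau^1_{\nu,\nu}$ and $\tau^2_{s_{\alpha}(\nu)+\alpha^{\vee},\nu}$ from Proposition~\ref{T:Kazhdan--Patterson} with $\alpha = 2e_r$ and $n_{\alpha} = n$, under which the coroot monomial $\mathbf{z}^{n_{\alpha}\alpha^{\vee}}$ specializes to the factor $z_r^{2n}$ carried by the two interchanged rows (parameters $z_r$ and $z_r^{-1}$), and check that the results equal the table entries with $g$ replaced by $g(2\,\cdot\,)$ and $z_r \leftrightarrow z_r^{-1}$. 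The change-of-basis $f_{\Delta}, f_{\Gamma}$ factors and the division by $z_1^n - v z_2^n$ that define the modified weights are exactly the monomial ratios produced by the exponents $\varepsilon$; matching them converts the raw coefficients of the displayed identity into $\overline{\wt}_{\Gamma\Gamma}$, and comparison with the rearranged~\eqref{E:result_inverse} closes the equivalence.

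The main obstacle will be the monomial bookkeeping in this last step: tracking the exponents $\varepsilon(a,\mathbf{c})$ of Conjecture~\ref{C:conj_Z_spherical} across all three partition functions and verifying that their differences reproduce precisely the $f_{\Delta}/f_{\Gamma}$ ratios and normalizing divisors built into $\overline{\wt}_{\Gamma\Gamma}$. A secondary difficulty is that Proposition~\ref{P:taus_R-vertices} is stated only for the short reflection $i \leftrightarrow i+1$, so the long-root values of $\tau^1$ and $\tau^2$ must be extracted directly from Proposition~\ref{T:Kazhdan--Patterson}, where the ceiling-function contribution and the factor $Q(\alpha^{\vee}) = 2$ demand care; once these are pinned down, the doubling $g \mapsto g(2\,\cdot\,)$ and the flip $z_r \leftrightarrow z_r^{-1}$ fall out automatically, and the equivalence with the Whittaker identity follows.
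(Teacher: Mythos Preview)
The paper does not prove this statement. It is stated as a conjecture with no accompanying argument; the only commentary is the sentence that follows it, ``A proof of Conjecture~\ref{C:conj_Z_spherical} would therefore establish Conjectures~\ref{C:modified_functional1} and~\ref{C:modified_functional2},'' together with the earlier remark that the caduceus constant~\eqref{E:cad_result} and the fish constant of Lemma~\ref{thm:Fish2} become $1$ under the modified weights. So there is no proof in the paper to compare against beyond the bare citation of Theorem~\ref{thm:inverse} and Proposition~\ref{P:taus_R-vertices} in the conjecture's own statement.

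Your outline is the natural one and is evidently what the author has in mind: rearrange~\eqref{E:result_inverse} to isolate $Z_{\lambda}(w(\mathbf{z});\mathbf{c})$, evaluate~\eqref{E:Whittaker_expansion} at $\pi(\varpi^{\lambda})\phi_K^{\chi}$, invoke the assumed Conjecture~\ref{C:conj_Z_spherical} on all three terms, and match coefficients against Table~\ref{tab:modified_braid_weights_GG}. You are also right that the two obstacles you name are genuine. The first---tracking the unspecified monomial exponents $\varepsilon(a,\mathbf{c})$ of Conjecture~\ref{C:conj_Z_spherical} across three partition functions and showing that their ratios reproduce exactly the $f_{\Delta}/f_{\Gamma}$ change-of-basis factors---cannot be carried out without a more precise form of Conjecture~\ref{C:conj_Z_spherical}, and this is presumably why the author declines to call this a proposition. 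The second---that Proposition~\ref{P:taus_R-vertices} is stated and proved only for short simple reflections---is also real: a long-root analogue must be extracted directly from Proposition~\ref{T:Kazhdan--Patterson}, and the paper does not do this.

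One caution on the coroot bookkeeping: with the paper's normalization (standard dot product, $\Phi$ of type $C_r$), the long simple root is $\alpha = 2e_r$ and its coroot is $\alpha^{\vee} = e_r$, not $2e_r$; the claim $Q(\alpha^{\vee}) = 2$ is not correct under that convention. The doubling $g(e) \mapsto g(2e)$ in Theorem~\ref{thm:inverse} is better traced directly to the $g(2a)$ appearing in the $\Delta\Gamma$-bend weight (end of Section~\ref{sec:Partition_Boltzmann_Charge}) and to the $g(a) \mapsto g(2a)$ alteration introduced in Case~3 of the proof of Theorem~\ref{thm:inverse}, rather than to $Q(\alpha^{\vee})$. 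You will need to be careful with the root/coroot conventions when you actually compute $\tau^1$ and $\tau^2$ for the long reflection.
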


A proof of Conjecture~\ref{C:conj_Z_spherical} would therefore establish Conjectures~\ref{C:modified_functional1} and~\ref{C:modified_functional2}, proving that the relationship between partition functions and $R$-vertices matches the relationship between spherical Whittaker functions and the structure constants $\tau^1$ and $\tau^2$. It should be mentioned, however, that this matching is not enough to show that the partition function $Z_{\lambda}$ and the Whittaker function are the same, since the functional equations do not uniquely determine the involved functions. This remains a topic for future exploration. 

\section{Further Questions}
\label{sec:further_questions}

It would be interesting to give a proof of Conjecture~\ref{C:conj_Z_spherical} from Section~\ref{sec:Connections_intertwining_Whittaker}, thereby showing that the partition functions of symplectic ice satisfy the same identities under our solution to the Yang--Baxter equation as the metaplectic Whittaker function under intertwining operators on unramified principal series.

An interesting future project would be to explore the case when $n$ is even, since the techniques that have been used in this paper have relied on $n$ being odd.

In Section~\ref{sec:Connections_Metaplectic}, we related certain admissible states of our symplectic-ice model to a multiple Dirichlet series. Some possible future work could perhaps relate all the admissible states---and therefore the partition function $Z$---to the series.

Concerning the appearance of Whittaker functions in Section~\ref{sec:Connections_intertwining_Whittaker}, this is not the first time that such functions have been connected to statistical mechanics. Connections between statistical mechanics and archimedean Whittaker functions date back to work of Kazhdan and Kostant, who recognized that quantum Toda Hamiltonians, when restricted to the space of Whittaker functions, agreed with the differential operators in the center of the universal enveloping algebra. This inspired Kostant's later proof of the total integrability of the Toda lattice.

It would be interesting to explore why statistical-mechanical models have been appropriate analogues of the Toda lattice; the models have a discrete nature, whereas the Toda lattice is continuous.

Here is another topic to explore. Consider metaplectic Whittaker functions on covering groups where the underlying field is $\mathbb{C}$. Each central extension is trivial, since everything splits. But this is different for the case when the underlying field is $\mathbb{R}$. (See~\cite{Kostant}.) It would be interesting to study the metaplectic archimedean case for the Toda lattice, i.e., a metaplectic Whittaker function for a double cover of real groups.

There is a deeper question on connections to geometry. McNamara~\cite{McNamara} showed that for $\GL(r)$, if one breaks up the unipotent radical in $\GL(r)$ into geometrically defined pieces called Mirkovi\'{c}--Vilonen cycles, then the contribution from
each piece matches the Boltzmann weight of a state of square ice and therefore is a summand in Tokuyama's generating function. Does such a connection exist for other Cartan types? We can ask a more elementary question about whether generating function identities like Tokuyama's exist for characters of other types. Only one for type C is known.

%
\appendix
\section{Proofs from \S \ref{sec:Function_Eqns_Partition}}
\label{app:appendixCF}

\begin{proof}[Proof of Lemma~\ref{thm:caduceus} (caduceus relation)]
There are four choices for $(\varepsilon_1, \varepsilon_2, \varepsilon_3, \varepsilon_4)$. If $(\varepsilon_1, \varepsilon_2, \varepsilon_3, \varepsilon_4) = (+,-,-,+)$, then $I_5$ has four admissible states, and $Z(I_5) / \wt(I_6)$ is equal to
\[
	(z_{j}^{-n} - v_{\vphantom{i}}^n z_{i}^n)
	(z_{i}^{-n} - v_{\vphantom{j}} z_{j}^n) 
	(z_{i}^n - v_{\vphantom{j}} z_{j}^n)
	(z_{i}^{-n} - v_{\vphantom{j}} z_{j}^{-n}),
\]
which is~\eqref{E:cad_result}. If $(\varepsilon_1, \varepsilon_2, \varepsilon_3, \varepsilon_4) = (-, +, +, -)$, then $I_5$ has four admissible states, in which case $Z(I_5) / \wt(I_6)$ is equal to~\eqref{E:cad_result} as well.

Suppose $(\varepsilon_1, \varepsilon_2, \varepsilon_3, \varepsilon_4) = (+,-,+,-)$; we will analyze this case in more detail. The admissible states of $I_5$ are shown in Figure~\ref{fig:cad3}, where $\alpha$ and $\beta$ range over $\{ 0,1,.\ldots,n-1\}$ and $\alpha + \beta \equiv 1 \pod{n}$.

\begin{figure}[htb]
\centering
\makebox[\textwidth][c]{%
\centering
\begin{minipage}[c]{0.87\textwidth}
\hspace{0.85in}
%
\end{minipage}
}
\caption{}
\label{fig:cad3}
\end{figure}

\noindent If $\alpha = n-1$, then $\alpha+1$ stands for $0$, and if $\beta=0$, then $\beta-1$ stands for $n-1$. Number the five configurations in Figure~\ref{fig:cad3} from $1$ to $5$, starting with the top row and going left to right in each row. The Boltzmann weights of states $1$, $2$, and $5$ are easily determined. Each choice of $(\alpha,\beta)$ yields an admissible state of each of configurations $3$ and $4$. The sum of the Boltzmann weights of all admissible states for configuration $3$ is equal to
\[
	(1-v)^3 (1 - v^n) z_{i}^{-1} z_{j}^{-1},
\]
while the sum of the Boltzmann weights of all admissible states for configuration $4$ is equal to
\[
	(1-v) 
	(
	v^{n+1}(1-v) z_{i}^{n} z_{j}^{n} + v - v^n
	) 
	(z_{i}^{n} - z_{j}^{n})
	(z_{i}^{-n} - z_{j}^{-n})
	z_{i}^{-1} z_{j}^{-1} .
\]
It is straightforward to verify that $Z(I_5) / \wt(I_6)$ is equal to~\eqref{E:cad_result}, where $Z(I_5)$ is the sum of the Boltzmann weights of all admissible states for the five configurations.

The case when $(\varepsilon_1, \varepsilon_2, \varepsilon_3, \varepsilon_4) = (-, +, -, +)$ is much more tedious and is therefore left to the reader. Again, $Z(I_5) / \wt(I_6)$ is equal to~\eqref{E:cad_result}.
\end{proof}

\begin{proof}[Proof of Lemma~\ref{thm:Fish1} (fish relation, type $\varDelta \varGamma$)]
There are two choices for the pair $(\varepsilon_1, \varepsilon_2)$ of decorated spins. If $(\varepsilon_1, \varepsilon_2) = (+, -)$, then the admissible states of $I_5$ are shown in Figure~\ref{fig:Fish1_first},

\begin{figure}[htb]
\makebox[\textwidth][c]{%
%
}
\caption{}
\label{fig:Fish1_first}
\end{figure}

\noindent where $a + b \equiv 1 \pod{n}$. There are three subcases to consider. If $a = 0$, then only the first and third configurations are considered, in which case $Z (I_5) / \wt(I_6)$ is equal to $z_{r}^{-n} - v_{\vphantom{r}}^{n} z_{r}^{n}$. If $a \neq 0$ and the leftmost charges are $(a+1,a)$, then only the second configuration is considered, in which case $Z (I_5) / \wt(I_6)$ is equal to $z_{r}^{-n} - v_{\vphantom{r}}^{n} z_{r}^{n}$. If $a \neq 0$ and the leftmost charges are $(b,a)$, then only the third and fourth configurations are considered, in which case $Z(I_5) / \wt(I_6)$ is equal to $0$. But this third subcase can be excluded from consideration if we do not allow a flipped $\Gamma \Delta$-bend of the form $\smash{( \dspin{b}{+}, \dspin{a}{-} )}$ with $a+b \equiv 1 \pod{n}$ and $a \not \equiv 0$.

If $(\varepsilon_1, \varepsilon_2) = (-, +)$, then the admissible states of $I_5$ are shown in Figure~\ref{fig:Fish1_second},

\begin{figure}[htb]
\makebox[\textwidth][c]{%
%
}
\caption{}
\label{fig:Fish1_second}
\end{figure}

\noindent in which case $Z(I_5) / \wt(I_6)$ is equal to $z_{r}^{-n} - v_{\vphantom{r}}^{n} z_{r}^{n}$ also. 
\end{proof}

\begin{proof}[Proof of Lemma~\ref{thm:Fish2} (fish relation, type $\varGamma \varGamma$)]
There are two choices for the pair $(\varepsilon_1, \varepsilon_2)$ of decorated spins, and each choice yields exactly two admissible states of $I_{12}$.

\begin{figure}[htb]
\centering
\makebox[\textwidth][c]{%
\hfill\begin{minipage}[b]{0.20\textwidth}
\centering
%
\subcaption{}
\label{fig:Fish2_second}
\end{minipage}\hfill
}
\caption{}
\label{fig:Fish2_cases}
\end{figure}

\noindent If $(\varepsilon_1, \varepsilon_2) = (+, -)$, then the admissible states of $I_{12}$ are shown in Figure~\ref{fig:Fish2_first}, in which case $Z^{\ast} (I_{12}) / \wt^{\ast} (I_{13})$ is equal to
\begin{equation}
\label{E:california}
	\dfrac{
	(z_{r}^{-n} - z_{r}^{n}) 
	z_{r}^{-1}
	+ (1-v) z_{r}^{n-1} 
	}{ z_{r}^{-1} } \, .
\end{equation}

\noindent If $(\varepsilon_1, \varepsilon_2) = (-, +)$, then the admissible states of $I_{12}$ are shown in Figure~\ref{fig:Fish2_second}, in which case $Z^{\ast} (I_{12}) / \wt^{\ast} (I_{13})$ is equal to
\begin{equation}
\label{E:colorado}
	\dfrac{
	v(z_{r}^{-n} - z_{r}^{n}) z_{r}
	+ (1-v) z_{r}^{-(n-1)} 
	}{ z_{r} } \, .
\end{equation}
The expressions in~\eqref{E:california} and~\eqref{E:colorado} are equal to $z_{r}^{-n} - v z_{r}^{n}$.
\end{proof}

\begin{proof}[Proof of Lemma~\ref{thm:Fish3} (fish relation, type $\varGamma \varDelta$)]
There are two choices for the pair $(\varepsilon_1, \varepsilon_2)$ of decorated spins, and each choice yields exactly two admissible states of $I_{19}$.

\begin{figure}[htb]
\centering
\makebox[\textwidth][c]{%
\hfill\begin{minipage}[b]{0.20\textwidth}
\centering
%
\subcaption{}
\label{fig:Fish3_second}
\end{minipage}\hfill
}
\caption{}
\label{fig:Fish3_cases}
\end{figure}

\noindent If $(\varepsilon_1, \varepsilon_2) = (+, -)$, then the admissible states of $I_{19}$ are shown in Figure~\ref{fig:Fish3_first}, in which case $Z(I_{19}) / \wt(I_{20})$ is equal to
\begin{equation}
\label{E:washington}
	\dfrac{
	(z_{r}^{n} - z_{r}^{-n}) z_{r}
	+
	(1 - v) z_{r}^{-(n-1)} 
	}{ z_{r} } \, .
\end{equation}
If $(\varepsilon_1, \varepsilon_2) = (-, +)$, then the admissible states of $I_{19}$ are shown in Figure~\ref{fig:Fish3_second}, in which case $Z(I_{19}) / \wt(I_{20})$ is equal to
\begin{equation}
\label{E:virginia}
	\dfrac{ 
	(v^2 z_{r}^{-n} - z_{r}^{n}) z_{r}^{-1}
	+ 
	(1 - v) z_{r}^{n-1} 
	}{ g(0) z_{r}^{-1} }
	\, ,
\end{equation}
where $g(0) = - v$. The expressions in~\eqref{E:washington} and~\eqref{E:virginia} are equal to $z_{r}^{n} - v z_{r}^{-n}$.
\end{proof}

\section{Cases of the Yang--Baxter Equation ($\Delta \Delta$ Ice)}
\label{app:appendixDD}

In the cases below, $a$ is always an element of $\{ 1, 2,\ldots,n-1\}$, the set of least positive residue representatives modulo $n$, unless specified otherwise. Each case below includes two tables of the following form.

\begin{table}[H]
\begin{tabular}[t]{@{}ccc@{}}
	\toprule 
	\raisebox{1.0pt}{$\dspin{e_1}{\alpha_1}$}
	& \raisebox{1.0pt}{$\dspin{e_2}{\alpha_2}$}
	& Weight$\vphantom{\raisebox{0.5pt}{$\dspin{f_1}{\omega_1}$}}$ \\
	\bottomrule 
\end{tabular}\qquad%
\begin{tabular}[t]{ccc}
	\toprule 
	\raisebox{0.5pt}{$\dspin{f_1}{\omega_1}$}
	& \raisebox{0.5pt}{$\dspin{f_2}{\omega_2}$}
	& Weight \\
	\bottomrule 
\end{tabular}
\end{table}

The table on the left is for the left-hand side of the YBE, and the table on the right is for the right-hand side. See Figure~\ref{fig:YBE}. Each table gives the two interior decorated spins for each admissible state; the third interior spin is omitted, since it can be determined from the given data. If another integer $b$ appears, both $a$ and $b$ will be distinct elements of $\{1,2,\ldots,n-1\}$ (and likewise for other integers $c$ and $d$).\\

\noindent \textbf{Case 1:} $(
	\dspin{c_1}{\varepsilon_1}, 
	\dspin{c_2}{\varepsilon_2}, 
	\varepsilon_3,
	\dspin{c_4}{\varepsilon_4}, 
	\dspin{c_5}{\varepsilon_5}, 
	\varepsilon_6) = 
	(\dspin{0}{+},
	\dspin{0}{+},
	+,
	\dspin{0}{+},
	\dspin{0}{+},
	+)$.

\begin{table}[H]

\end{table}

\noindent\textbf{Case 15:} $(\varepsilon_1, \varepsilon_2, \varepsilon_3, \varepsilon_4, \varepsilon_5, \varepsilon_6) = (-, -, -, -, +, +)$. No admissible states exist.

\noindent\textbf{Case 16:} $(\varepsilon_1, \varepsilon_2, \varepsilon_3, \varepsilon_4, \varepsilon_5, \varepsilon_6) = (-, -, -, +, -, +)$. No admissible states exist.

\noindent\textbf{Case 17:} $(\varepsilon_1, \varepsilon_2, \varepsilon_3, \varepsilon_4, \varepsilon_5, \varepsilon_6) = (+, +, +, -, +, -)$. No admissible states exist.

\noindent\textbf{Case 18:} $(\varepsilon_1, \varepsilon_2, \varepsilon_3, \varepsilon_4, \varepsilon_5, \varepsilon_6) = (+, +, +, +, -, -)$. No admissible states exist.

\noindent\textbf{Case 19:} $(\dspin{c_1}{\varepsilon_1}, 
	\dspin{c_2}{\varepsilon_2}, 
	\varepsilon_3,
	\dspin{c_4}{\varepsilon_4}, 
	\dspin{c_5}{\varepsilon_5}, 
	\varepsilon_6) = 
	(\dspin{0}{+},
	\dspin{0}{-},
	+,
	\dspin{0}{+},
	\dspin{0}{+},
	-)$.

\begin{table}[H]

\end{table}

\noindent\textbf{Case 15:} $(\varepsilon_1, \varepsilon_2, \varepsilon_3, \varepsilon_4, \varepsilon_5, \varepsilon_6) = (-, -, -, -, +, +)$. No admissible states exist.

\noindent\textbf{Case 16:} $(\varepsilon_1, \varepsilon_2, \varepsilon_3, \varepsilon_4, \varepsilon_5, \varepsilon_6) = (-, -, -, +, -, +)$. No admissible states exist.

\noindent\textbf{Case 17:} $(\varepsilon_1, \varepsilon_2, \varepsilon_3, \varepsilon_4, \varepsilon_5, \varepsilon_6) = (+, +, +, -, +, -)$. No admissible states exist.

\noindent\textbf{Case 18:} $(\varepsilon_1, \varepsilon_2, \varepsilon_3, \varepsilon_4, \varepsilon_5, \varepsilon_6) = (+, +, +, +, -, -)$. No admissible states exist.

\noindent\textbf{Case 19:} $(\dspin{c_1}{\varepsilon_1}, 
	\dspin{c_2}{\varepsilon_2}, 
	\varepsilon_3,
	\dspin{c_4}{\varepsilon_4}, 
	\dspin{c_5}{\varepsilon_5}, 
	\varepsilon_6) = 
	(\dspin{0}{+},
	\dspin{0}{-},
	+,
	\dspin{0}{+},
	\dspin{0}{+},
	-)$.

\begin{table}[H]

\end{table}

\noindent This is also true when $a = 0$.

\noindent\textbf{Case 26:} $(\varepsilon_1, \varepsilon_2, \varepsilon_3, \varepsilon_4, \varepsilon_5, \varepsilon_6) = (+, +, -, -, -, -)$. No admissible states exist.

\noindent\textbf{Case 27a:} $(\dspin{c_1}{\varepsilon_1}, 
	\dspin{c_2}{\varepsilon_2}, 
	\varepsilon_3,
	\dspin{c_4}{\varepsilon_4}, 
	\dspin{c_5}{\varepsilon_5}, 
	\varepsilon_6) = 
	(\dspin{0}{+},
	\dspin{0}{-},
	-,
	\dspin{a{+}1}{-},
	\dspin{b{-}1}{+},
	-)$, where $a + b \equiv 1 \pod{n}$.

\begin{table}[H]

\end{table}

\noindent\textbf{Case 15:} $(\varepsilon_1, \varepsilon_2, \varepsilon_3, \varepsilon_4, \varepsilon_5, \varepsilon_6) = (-, -, -, -, +, +)$. No admissible states exist.

\noindent\textbf{Case 16:} $(\varepsilon_1, \varepsilon_2, \varepsilon_3, \varepsilon_4, \varepsilon_5, \varepsilon_6) = (-, -, -, +, -, +)$. No admissible states exist.

\noindent\textbf{Case 17:} $(\varepsilon_1, \varepsilon_2, \varepsilon_3, \varepsilon_4, \varepsilon_5, \varepsilon_6) = (+, +, +, -, +, -)$. No admissible states exist.

\noindent\textbf{Case 18:} $(\varepsilon_1, \varepsilon_2, \varepsilon_3, \varepsilon_4, \varepsilon_5, \varepsilon_6) = (+, +, +, +, -, -)$. No admissible states exist.

\noindent\textbf{Case 19a:} $(\dspin{c_1}{\varepsilon_1}, 
	\dspin{c_2}{\varepsilon_2}, 
	\varepsilon_3,
	\dspin{c_4}{\varepsilon_4}, 
	\dspin{c_5}{\varepsilon_5}, 
	\varepsilon_6) = 
	(\dspin{1}{+},
	\dspin{0}{-},
	+,
	\dspin{0}{+},
	\dspin{0}{+},
	-)$.

\begin{table}[H]

\end{table}

\noindent Here $e \equiv a-c \pod{n}$ with $e \in [1,n-1]$, and 
\[
	x = 
	\begin{cases}
	1
	&\text{if either $ad=0$ or else $abcd \neq 0$ and $a > c$,}\\
	v
	&\text{if either \parbox[t]{\widthof{$ad=0$}}{\hfill$bc=0$} or else $abcd \neq 0$ and $a < c$.}
	\end{cases}
\]

\noindent\textbf{Case 29:} $(\dspin{c_1}{\varepsilon_1}, 
	\dspin{c_2}{\varepsilon_2}, 
	\varepsilon_3,
	\dspin{c_4}{\varepsilon_4}, 
	\dspin{c_5}{\varepsilon_5}, 
	\varepsilon_6) = 
	(\dspin{0}{-},
	\dspin{0}{+},
	-,
	\dspin{0}{-},
	\dspin{0}{+},
	-)$.

\begin{table}[H]

\end{table}

\noindent This is also true when $a = 0$.

\begin{bibdiv}
  \begin{biblist}

\bib{TFFB}{article}{
   	author={Balogh, Ferenc},
   	author={Fonseca, Tiago},
   	title={Higher spin generalization of the 6-vertex model and Macdonald polynomials},
   	date={2014},
	eprint={http://arxiv.org/pdf/1210.4527v2.pdf}
}

\bib{Baxter}{book}{
   	author={Baxter, Rodney},
   	title={Exactly solved models in statistical mechanics},
	series={},
	volume={},
	publisher={Academic Press Inc. [Harcourt Brace Jovanovich Publishers]},
	address={London},
   	date={1982}
}

\bib{BeBrFr1}{article}{
	author={Beineke, Jennifer},
   	author={Brubaker, Benjamin},
   	author={Frechette, Sharon},
   	title={\normalfont ``A crystal definition for symplectic multiple Dirichlet series,'' \textit{Multiple Dirichlet Series, $L$-Functions and Automorphic Forms} (Bump, Friedberg, Goldfeld, eds.)},
	journal={},
   	date={2012},
	series={Birkh\"{a}user Progress in Mathematics},
	number={},
	volume={300},
	pages={37\ndash63},
	eprint={}
}

\bib{BeBrFr2}{article}{
	author={Beineke, Jennifer},
   	author={Brubaker, Benjamin},
   	author={Frechette, Sharon},
   	title={Weyl group multiple Dirichlet series of type C},
	journal={Pacific J. Math.},
   	date={2011},
	series={},
	number={1},
	volume={254},
	pages={11\ndash46},
	eprint={http://arxiv.org/pdf/1003.1158v2.pdf}
}

\bib{BBB}{article}{
   	author={Brubaker, Benjamin},
	author={Buciumas, Valentin},
   	author={Bump, Daniel},
   	title={A Yang\ndash Baxter equation for metaplectic ice},
	journal={},
   	date={2016},
	series={},
	number={},
	volume={},
	pages={},
	eprint={http://arxiv.org/pdf/1604.02206v3.pdf}
}

\bib{BBBGray}{article}{
   	author={Brubaker, Benjamin},
	author={Buciumas, Valentin},
   	author={Bump, Daniel},
	author={Gray, Nathan},
   	title={Duality for metaplectic ice},
	journal={Commun.~Number Theory Phys.},
	status={to appear},
   	date={2018},
	series={},
	number={},
	volume={},
	pages={},
	eprint={https://arxiv.org/pdf/1709.06500v3.pdf},
	note={(Appears as the appendix to \texttt{arXiv:1604.02206v5}.)}
}

\bib{BBCFG}{article}{
   	author={Brubaker, Benjamin},
   	author={Bump, Daniel},
   	author={Chinta, Gautam},
	author={Friedberg, Solomon},
   	author={Gunnells, Paul},
   	title={\normalfont ``Metaplectic ice,'' \textit{Multiple Dirichlet Series, $L$-Functions and Automorphic Forms} (Bump, Friedberg, Goldfeld, eds.)},
	journal={},
   	date={2012},
	series={Birkh\"{a}user Progress in Mathematics},
	number={},
	volume={300},
	pages={65\ndash92},
	eprint={}
}

\bib{BBCG}{article}{
   	author={Brubaker, Benjamin},
   	author={Bump, Daniel},
   	author={Chinta, Gautam},
   	author={Gunnells, Paul},
   	title={\normalfont ``Crystals of type B and metaplectic Whittaker functions,'' \textit{Multiple Dirichlet Series, $L$-Functions and Automorphic Forms} (Bump, Friedberg, Goldfeld, eds.)},
	journal={},
   	date={2012},
	series={Birkh\"{a}user Progress in Mathematics},
	number={},
	volume={300},
	pages={93\ndash118},
	eprint={}
}

\bib{BBF1}{article}{
   	author={Brubaker, Benjamin},
   	author={Bump, Daniel},
   	author={Friedberg, Solomon},
   	title={Eisenstein series, crystals and ice},
	journal={Notices Amer. Math. Soc.},
   	date={2011},
	series={},
	number={11},
	volume={58},
	pages={1563\ndash1571},
	eprint={http://sporadic.stanford.edu/bump/ecinams.pdf}
}

\bib{BBF2}{article}{
   	author={Brubaker, Benjamin},
   	author={Bump, Daniel},
   	author={Friedberg, Solomon},
   	title={Schur polynomials and the Yang\ndash Baxter equation},
	journal={Comm. Math. Phys.},
   	date={2011},
	series={},
	number={2},
	volume={308},
	pages={281\ndash301},
	eprint={http://sporadic.stanford.edu/bump/hkice.pdf}
}

\bib{BBF_annals}{article}{
   	author={Brubaker, Benjamin},
   	author={Bump, Daniel},
   	author={Friedberg, Solomon},
   	title={Weyl group multiple Dirichlet series: type A combinatorial theory},
	journal={Ann. of Math. Stud.},
   	date={2011},
	publisher={Princeton University Press},
	issue={},
	number={175},
	volume={},
	pages={},
	eprint={}
}

\bib{BS}{article}{
   	author={Brubaker, Benjamin},
   	author={Schultz, Andrew},
   	title={The 6-vertex model and deformations of the Weyl character formula},
	journal={},
   	date={2014},
	series={},
	number={},
	volume={},
	pages={},
	eprint={http://arxiv.org/pdf/1402.2339v2.pdf}
}

\bib{Bump}{book}{
   	author={Bump, Daniel},
   	title={Automorphic forms and representations},
	series={Cambridge Stud. Adv. Math.},
	volume={55},
	publisher={Cambridge University Press},
	address={Cambridge},
   	date={1998}
}

\bib{BMN}{article}{
   	author={Bump, Daniel},
	author={McNamara, Peter},
	author={Nakasuji, Maki},
   	title={Factorial Schur functions and the Yang\ndash Baxter equation},
	journal={},
   	date={2014},
	series={},
	number={},
	volume={},
	pages={},
	eprint={http://arxiv.org/pdf/1108.3087v3.pdf}
}

\bib{CS}{article}{
	author={Casselman, William},
	author={Shalika, Joseph},
   	title={The unramified principal series of $p$-adic groups. II. The Whittaker function},
	journal={Compositio Math.},
   	date={1980},
	series={},
	number={2},
	volume={41},
	pages={207\ndash231},
	eprint={}
}

\bib{FZ}{article}{
   	author={Friedberg, Solomon},
   	author={Zhang, Lei},
   	title={Eisenstein series on covers of odd orthogonal groups},
	journal={Amer. J. Math.},
   	date={2015},
	series={},
	number={},
	volume={137},
	pages={953\ndash1011},
	eprint={http://arxiv.org/pdf/1301.3026v2.pdf}
}

\bib{HK1}{article}{
	author={Hamel, Angele},
	author={King, Ronald},
   	title={Symplectic shifted tableaux and deformations of Weyl's denominator formula for $\ssp(2n)$},
	journal={J. Algebraic Combin.},
	date={2002},
	series={},
	number={3},
	volume={16},
	pages={269\ndash300},
	eprint={}
}

\bib{HK2}{article}{
	author={Hamel, Angele},
	author={King, Ronald},
   	title={Bijective proofs of shifted tableau and alternating sign matrix identities},
	journal={J. Algebraic Combin.},
   	date={2007},
	series={},
	number={4},
	volume={25},
	pages={417\ndash458},
	eprint={}
}

\bib{HK3}{article}{
	author={Hamel, Angele},
	author={King, Ronald},
   	title={U-turn alternating sign matrices, symplectic shifted tableaux and their weighted enumeration},
	journal={J.~Algebraic Combin.},
   	date={2005},
	series={},
	number={4},
	volume={21},
	pages={395\ndash421},
	eprint={}
}

\bib{Ivanov}{thesis}{
   	author={Ivanov, Dmitriy},
   	title={Symplectic ice},
   	date={2010},
	type={Ph.D Thesis},
}

\bib{Izergin}{article}{
   	author={Izergin, Anatolli G.},
   	title={Partition function of a six-vertex model in a finite volume},
	journal={Dokl. Akad. Nauk},
	volume={297},
	issue={2},
	pages={331\ndash333},
   	date={1987}
}

\bib{KP}{article}{
   	author={Kazhdan, David A.},
   	author={Patterson, S. J.},
   	title={Metaplectic forms},
	journal={Pub. Math. Inst. Hautes \'{E}tudes Sci.},
	volume={59},
	pages={35\ndash142},
   	date={1984},
}

\bib{Korepin}{article}{
   	author={Korepin, Vladimir E.},
   	title={Calculation of norms of Bethe wave functions},
	journal={Comm. Math. Phys.},
	volume={86},
	issue={3},
	pages={391\ndash418},
   	date={1982}
}

\bib{Kostant}{article}{
   	author={Kostant, Bertram},
   	title={On Laguerre polynomials, Bessel functions, Hankel transform and a series in the unitary dual of the simply-connected covering group of $\SL (2, \mathbb{R})$},
	journal={Represent. Theory},
	volume={4},
	pages={181\ndash224},
   	date={2000},
}

\bib{Kuperberg1}{article}{
   	author={Kuperberg, Greg},
   	title={Another proof of the alternating-sign matrix conjecture},
	journal={Int. Math. Res. Not.},
   	date={1996},
	number={3},
	pages={139--150},
	eprint={http://arxiv.org/pdf/math/9712207v1.pdf}
}

\bib{Kuperberg2}{article}{
   	author={Kuperberg, Greg},
   	title={Symmetry classes of alternating-sign matrices under one roof},
	journal={Ann. of Math.},
   	date={2002},
	issue={2},
	number={},
	volume={156},
	pages={835\ndash866},
	eprint={https://arxiv.org/pdf/math/0008184v3.pdf}
}

\bib{Lieb}{article}{
   	author={Lieb, Elliott},
   	title={Exact solution of the problem of entropy in two-dimensional ice},
	journal={Phys. Rev. Lett.},
   	date={1967},
	issue={17},
	volume={18},
	pages={692\ndash694},
}

\bib{Littelmann}{article}{
   	author={Littelmann, Peter},
   	title={Cones, crystals, and patterns},
	journal={Transform. Groups},
   	date={1998},
	issue={2},
	number={},
	volume={3},
	pages={145\ndash179},
}

\bib{Matsumoto}{article}{
   	author={Matsumoto, Hideya},
   	title={Sur les sous-groupes arithm\'{e}tiques des groupes semi-simples d\'{e}ploy\'{e}s},
	journal={Ann. Sci. \'{E}c. Norm. Sup\'{e}r},
   	date={1969},
	issue={1},
	number={},
	volume={2},
	pages={1\ndash62},
	eprint={}
}

\bib{McNamara}{article}{
   	author={McNamara, Peter},
   	title={Metaplectic Whittaker functions and crystal bases},
	journal={Duke Math. J.},
   	date={2011},
	issue={156},
	number={1},
	volume={},
	pages={1\ndash31},
}

\bib{McNamara2}{article}{
   	author={McNamara, Peter},
   	title={Principal series representations of metaplectic groups over local fields},
	booktitle={Multiple Dirichlet Series, $L$-Functions and Automorphic Forms},
	editor={Bump, Daniel},
	editor={Friedberg, Solomon},
	editor={Goldfeld, Dorian},
	journal={},
   	date={2012},
	series={Birkh\"{a}user Progress in Mathematics},
	number={},
	volume={300},
	pages={299\ndash327},
	eprint={}
}

\bib{McNamara3}{article}{
   	author={McNamara, Peter},
   	title={The metaplectic Casselman--Shalika formula},
	journal={Trans. Amer. Math. Soc.},
   	date={2014},
	issue={368},
	number={4},
	volume={},
	pages={2913\ndash2937},
	eprint={https://arxiv.org/pdf/1103.4653v3.pdf}
}

\bib{Proctor}{article}{
   	author={Proctor, Robert A.},
   	title={Young tableaux, Gelfand patterns, and branching rules for classical groups},
	journal={J. Algebra},
   	date={1994},
	issue={},
	number={},
	volume={164},
	pages={299\ndash360},
}

\bib{Sage}{misc}{
   	author={Stein et al., William A.},
   	title={Sage Mathematics Software (Version 6.1.1)},
	organization={The Sage Development Team},
   	date={2014},
	eprint={http://www.sagemath.org}
}

\bib{Shintani}{article}{
	author={Shintani, Takuro},
   	title={On an explicit formula for class-1 ``Whittaker functions'' on $\GL_n$ over $P$-adic fields},
	journal={Proc. Japan Acad.},
   	date={1976},
	series={},
	number={4},
	volume={52},
	pages={180\ndash182},
	eprint={}
}

\bib{Sutherland}{article}{
   	author={Sutherland, Bill},
   	title={Exact solution for a model for hydrogen-bonded crystals},
	journal={Phys. Rev. Lett.},
	volume={19},
	issue={3},
	pages={103\ndash104},
   	date={1967}
}

\bib{Tokuyama}{article}{
   	author={Tokuyama, Takeshi},
   	title={A generating function of strict Gelfand patterns and some formulas on characters of general linear groups},
	journal={J. Math. Soc. Japan},
   	date={1988},
	number={4},
	volume={40},
	pages={671\ndash685}
}

\bib{We}{article}{
   	author={Weissman, Martin},
   	title={Split metaplectic groups and their L-groups},
	journal={J. Reine Angew. Math.},
   	date={2014},
	number={},
	volume={},
	pages={89\ndash141},
	eprint={https://arxiv.org/abs/1108.1413}
}

  \end{biblist}
\end{bibdiv}
\end{document}